\newtheorem{lm}{Lemma}[section]
\newtheorem{prop}[lm]{Proposition}
\newtheorem{coro}[lm]{Corollary}
\newtheorem{teo}[lm]{Theorem}
\theoremstyle{definition}
\newtheorem{oss}[lm]{Remark}
\newtheorem{defi}[lm]{Definition}
\newtheorem{exa}[lm]{Example}
\newtheorem*{ack}{Acknowledgements}
\numberwithin{equation}{section}
\title[Higher H\"older regularity]{Higher H\"older regularity\\ for the fractional $p-$Laplacian\\ in the superquadratic case}
\author[Brasco]{Lorenzo Brasco}
\address[L.\ Brasco]{Dipartimento di Matematica e Informatica
\newline\indent
Universit\`a degli Studi di Ferrara
\newline\indent
Via Machiavelli 35, 44121 Ferrara, Italy}
\email{lorenzo.brasco@unife.it}
\author[Lindgren]{Erik Lindgren}
\address[E. Lindgren]{Department of Mathematics, Stockholm University
\newline\indent
106 91 Stockholm, Sweden}
\email{eriklin@kth.se, erik.lindgren@math.su.se}
\author[Schikorra]{Armin Schikorra}
\address[A. Schikorra]{Department of Mathematics, 301 Thackeray Hall, University of Pittsburgh
\newline\indent
PA 15260, USA}
\email{armin@pitt.edu}
\subjclass[2010]{35B65, 35J70, 35R09}
\keywords{Fractional $p-$Laplacian, nonlocal elliptic equations, H\"older regularity.}
\begin{document}
\begin{abstract}
We prove higher H\"older regularity for solutions of equations involving the fractional $p-$Laplacian of order $s$,
when $p\ge  2$ and $0<s<1$. In particular, we provide an explicit H\"older exponent for solutions of the non-homogeneous equation with data in $L^q$ and $q>N/(s\,p)$,
which is almost sharp whenever $s\,p\leq (p-1)+N/q$. The result is new already for the homogeneous equation.
\end{abstract}
\maketitle
\begin{center}
\begin{minipage}{10cm}
\small
\tableofcontents
\end{minipage}
\end{center}

\section{Introduction}
\subsection{Overview}
In this paper, we study the H\"older regularity of local weak solutions of the nonlinear and nonlocal elliptic equation
\begin{equation}\label{MainPDE}
(-\Delta_p)^s u=f, \qquad f\in L^q_{\rm loc}.
\end{equation}
Here $2\le p<\infty$, $0<s<1$ and $(-\Delta_p)^s$ is the {\it fractional $p-$Laplacian of order $s$}, formally defined by
\[
(-\Delta_p)^s u\, (x):=2\, \lim_{\varepsilon\to 0}\int_{\mathbb{R}^N\setminus B_\varepsilon(x)}\frac{|u(x)-u(y)|^{p-2}\,(u(x)-u(y))}{|x-y|^{N+s\,p}}\, dy.
\]
The operator $(-\Delta_p)^s$ is a nonlocal (or fractional) version of the well studied $p-$Laplacian operator, 
\[
\Delta_pu=\operatorname{div}(|\nabla u|^{p-2}\nabla u),
\]
and has in recent years attracted extensive attention. In weak form, this operator naturally arises as the first variation of {the Sobolev-Slobodecki\u{\i} seminorm for $W^{s,p}(\mathbb{R}^N)$, namely} of the nonlocal functional
\[
u\mapsto \iint_{\mathbb{R}^N\times \mathbb{R}^N} \frac{|u(x)-u(x)|^p}{|x-y|^{N+s\,p}}\,dx\,dy.
\]
We are concerned with the \emph{higher H\"older regularity} of solutions of \eqref{MainPDE}. More precisely, we prove that  if the right-hand side $f$ is in $L^q_{\rm loc}$ for 
\[
\left\{\begin{array}{lr}
q>\dfrac{N}{s\,p},& \mbox{ if } s\,p\le N,\\
&\\
q\ge 1,& \mbox{ if }s\,p>N,
\end{array}
\right.
\]
then local weak solutions (see Definition \ref{defi:localweak}) are locally $\delta-$H\"older continuous  
\[
\mbox{ for any }\delta <\min\left\{\frac{1}{p-1}\left(s\,p-\frac{N}{q}\right),\,1\right\}=:\Theta(N,s,p,q).
\] 
\begin{oss}[Homogeneous equation]
We observe that when $q=\infty$, the latter reduces to
\[
\min\left\{\frac{s\,p}{p-1},\,1\right\}.
\]
Thus this is the limit exponent we get for {\it $(s,p)-$harmonic functions}, i.e. local weak solutions of the homogeneous equation $(-\Delta_p)^s u=0$. %Actually, we will prove that these functions enjoy stronger regularity properties on the scale of Sobolev spaces, see Corollary \ref{coro:higherdiff} below. 
\end{oss}
To the best of our knowledge, this is the first result with an explicit H\"older exponent, even for the homogeneous equation. We point out that in general the exponent 
\[
\frac{1}{p-1}\left(s\,p-\frac{N}{q}\right),
\]
{\it is the sharp one}, see Examples \ref{exa:sharp} and \ref{exa:sharpinfty} below. 
This means that {\it our result is almost sharp} whenever 
\[
s\leq \frac{p-1}{p}+\frac{1}{p}\,\frac{N}{q}.
\]
and \emph{sharp} when $s =1/2$, $p=2$ and $q = \infty$.
For a more detailed description of our results we refer the reader to Section~\ref{sec:main}.
\subsection{State of the art} Equations of the type \eqref{MainPDE} were first considered in \cite{IN}, where viscosity solutions are studied. Existence, uniqueness, and the convergence to the $p-$Laplace equation as $s$ goes to $1$, are proved. The first pointwise regularity result for equations of this type is, to the best of our knowledge, \cite{DKP}, where the local H\"older regularity was proved, generalizing the celebrated ideas of De Giorgi to this nonlocal and nonlinear setting. The same authors pursued their investigation in the field, by proving Harnack's inequality for solutions of the homogeneous equation in \cite{DKP2}.
\par
There are also many other recent regularity results. In \cite{Li}, the second author studied the local H\"older regularity, using viscosity methods. In \cite{Co}, the results of \cite{DKP, DKP2} are generalized to functions belonging to a fractional analogue of the so-called {\it De Giorgi classes} (we refer to \cite[Chapter 7]{Gi} for the classical definition). This can be seen as the nonlocal counterpart of the celebrated results by Giaquinta and Giusti contained in \cite{GG}, for minima of local functionals with $p-$growth. It is worth pointing out that a pioneering use of fractional De Giorgi classes has been done in \cite{Mi}, in a local context.
\par
Finally, in \cite{IMS} H\"older regularity up to the boundary was obtained, using barrier arguments. Such a result has been recently enhanced by the same authors in \cite{IMS2}, for $p\ge 2$.
\par
Here we seize the opportunity to mention the paper \cite{KMS}, in which regularity for equations of the type \eqref{MainPDE} with a measure datum $f$ is studied. In particular, in \cite[Corollary 1.2]{KMS} the authors obtained the remarkable (sharp) result
\[
u\in C^0\qquad \mbox{ whenever } f\in L^{\frac{N}{s\,p},\frac{1}{p-1}}_{\rm loc},
\]
where the latter is a Lorentz space.
There has also been some recent progress in terms of {\it higher differentiability} of the solutions: in \cite{KMS2} and \cite{Coz15} the case $p=2$ is treated (also for a more general kernel $K(x,y)$, not necessarily given by $|x-y|^{-N-2\,s}$), while for a general $p\ge 2$ results in this sense have been obtained by the third author in \cite[Theorem 1.3]{Sc} and by the first and second author in \cite[Theorem 1.5]{Brolin}. See also the more recent contributions \cite{Auscher1,Auscher2}.
\par
However, in none of the above mentioned papers an explicit H\"older exponent has been obtained. This is typical when for instance using techniques in the vein of  De Giorgi and Moser: one can just prove the existence of {\it some} H\"older exponent. 
\begin{oss}
It is worth pointing out that in the linear case (i.e. when $p=2$), the regularity of solutions is well-understood. For instance, it follows from the integral representation in terms of Poisson kernels that solutions of $(-\Delta)^s u =0$ are $C^\infty$, see \cite[page 125, formula (1.6.19)]{La}. In addition, solutions of $(-\Delta)^s u =f$ for $f\in L^\infty$ are $C^{2\,s}$ whenever $2\,s\neq 1$ (see for example \cite[Theorem 6.4]{ALP}).
\end{oss}
\begin{oss}
We also mention that there are other ways of defining fractional versions of the $p-$Laplace operator. This has been done in \cite{BCF, BCF2, CJ}, \cite{SSS} and \cite{SV}. In \cite{BCF} and \cite{BCF2} an interesting connection to a nonlocal tug-of-war is found, while the operator introduced in \cite{CJ} is connected to L\'evy processes. We point out that the operators considered in these papers differ substantially from the one considered in the present paper.
\end{oss}

\subsection{Main results}\label{sec:main} Here we describe our main result. This is valid for \emph{local weak solutions} of \eqref{MainPDE}. However, a proper definition of weak solutions is quite technical and we therefore postpone this definition and other technicalities to Section \ref{sec:ex}. Below is our main theorem:
\begin{teo}\label{mainthm}
Let $\Omega\subset\mathbb{R}^N$ be a bounded and open set. Assume $2\le p<\infty$, $0<s<1$ and 
\[
\left\{\begin{array}{lr}
q>\dfrac{N}{s\,p},& \mbox{ if } s\,p\le N,\\
&\\
q\ge 1,& \mbox{ if }s\,p>N,
\end{array}
\right.
\]
We define the exponent
\begin{equation}
\label{theta}
\Theta(N,s,p,q):=\min\left\{\frac{1}{p-1}\left(s\,p-\frac{N}{q}\right),\,1\right\}.
\end{equation}
Let $u\in W^{s,p}_{\rm loc}(\Omega)\cap L^\infty_{\rm loc}(\Omega)\cap L^{p-1}_{s\,p}(\mathbb{R}^N)$ be a local weak solution of 
$$
(-\Delta_p)^s u = f, \qquad \mbox{ in }\Omega,
$$
where $f\in L_{\rm loc}^{q}(\Omega)$. Then $u\in C^\delta_{\rm loc}(\Omega)$ for every $0<\delta<\Theta(N,s,p,q)$. 
\par
More precisely, for every $0<\delta<\Theta(N,s,p,q)$ and every ball $B_{4R}(x_0)\Subset\Omega$, there exists a constant $C=C(N,s,p,\delta)>0$ such that 
\[
\begin{split}
[u]_{C^{\delta}(B_{R/8}(x_0))}\leq \frac{C}{R^{\delta}}&\,\Bigg [\|u\|_{L^\infty(B_{R}(x_0))}+\left(R^{s\,p}\,\int_{\mathbb{R}^N\setminus B_{R}(x_0)}\frac{|u(y)|^{p-1}}{|x_0-y|^{N+s\,p}}\,  dy\right)^\frac{1}{p-1}\\
&+\left(R^{s\,p-\frac{N}{q}}\|f\|_{L^{q}(B_{R}(x_0))}\right)^\frac{1}{p-1}\Bigg ].
\end{split}
\]
\end{teo}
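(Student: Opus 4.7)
\emph{Plan.} The strategy would be a Moser-type iteration on finite differences of the solution, in the spirit of the authors' earlier work~\cite{Brolin}. By translation, rescaling, and normalization, it is enough to prove the $C^\delta$ bound on $B_{1/8}$ for $\delta<\Theta(N,s,p,q)$ in the model case $x_0=0$, $R=1$, assuming that the bracketed quantity on the right-hand side of the theorem is of order $1$; equivalently, that $\|u\|_{L^\infty(B_1)}$, $\|u\|_{L^{p-1}_{sp}(\mathbb{R}^N)}$, and $\|f\|_{L^q(B_1)}$ are all controlled.

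The core of the proof is a nonlinear discrete Caccioppoli estimate for the operator, obtained by testing $(-\Delta_p)^s u = f$ against a function built from a power of the difference $\tau_h u := u(\cdot+h)-u(\cdot)$ multiplied by a cutoff $\eta$. Exploiting the monotonicity of $z\mapsto |z|^{p-2}z$ together with a discrete Leibniz/chain rule, one expects to obtain, for every small $h$, an inequality controlling a suitable weighted fractional Gagliardo seminorm of $\tau_h u$ (raised to an appropriate power $>1$) by lower-order norms of $\tau_h u$, plus a source contribution governed by $\|f\|_{L^q}$ and a nonlocal tail contribution governed by $\|u\|_{L^{p-1}_{sp}}$. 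Combined with a Sobolev-Slobodecki\u{\i} embedding, this translates into a self-improvement: if $\tau_h u/|h|^\alpha \in L^{q_0}_{\rm loc}$, then $\tau_h u/|h|^\alpha \in L^{q_1}_{\rm loc}$ with $q_1>q_0$, losing only an arbitrarily small amount in $\alpha$, \emph{provided $\alpha<\Theta(N,s,p,q)$}. Iterating this inequality finitely many times and sending $q_1\to\infty$ would produce an $L^\infty$ bound for $|\tau_h u|/|h|^\delta$ on a smaller ball, which is exactly $u\in C^\delta_{\rm loc}$. The threshold $\Theta=\frac{1}{p-1}(sp-N/q)$ emerges naturally from balancing the Sobolev scaling $sp-N/q$ against the $(p-1)$-homogeneity of the operator.

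The main obstacles I foresee are two. First, one must track the dependence on the iteration parameters through the superquadratic vector field $J_p(z)=|z|^{p-2}z$, so that the constants in the geometric series produced by the Moser scheme stay finite and depend explicitly on $\delta$ (blowing up only as $\delta\uparrow\Theta$). Second, one must propagate uniform control of the nonlocal tail through the iteration: a careless estimate would feed back into itself and destroy the gain at each step, so the tail term has to be treated delicately, possibly by splitting the integral into a near-field and a far-field contribution and absorbing the former after a careful reabsorption argument. Both difficulties force the proof to stop strictly below $\Theta$, which explains the quantifier "for every $\delta<\Theta$" in the statement rather than an equality at $\delta=\Theta$.
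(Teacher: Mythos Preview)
Your plan captures the spirit of the paper's treatment of the \emph{homogeneous} equation: one does test the discretely differentiated equation against powers of $\delta_h u$ times a cutoff, and iterate. But even there your sketch is incomplete: the paper's iteration proceeds in two stages, not one. The first stage (Proposition~\ref{prop:improve}) gains only integrability of $\delta^2_h u/|h|^s$, yielding almost-$C^s$ regularity (Theorem~\ref{teo:1}); only once this is available can one run the refined iteration (Proposition~\ref{prop:improve2}) that also gains differentiability and reaches $\min\{sp/(p-1),1\}$. The second stage needs a priori $C^{s-\varepsilon}$ control of $u$ to estimate the local cross term $\mathcal{I}_{11}$, which the first stage supplies. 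A single Moser loop as you describe stalls at $\delta<s$.

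The more serious gap is your handling of $f$. Differentiating the equation in $h$ puts $\delta_h f/|h|$ on the right-hand side, and for $f\in L^q$ there is no usable bound on this; shifting the difference onto the test function $J_{\beta+1}(\delta_h u/|h|^\vartheta)\,\eta^p$ generates second-order differences of $u$ with an adverse homogeneity that cannot be absorbed into the left side. The paper does \emph{not} carry $f$ through the iteration. Instead, after proving the homogeneous result, it runs a harmonic-replacement and blow-up argument (Section~\ref{sec:last}): on each ball one compares $u$ with the $(s,p)$-harmonic function $h$ sharing its exterior data, shows by a compactness argument (Lemma~\ref{lm:stab}, which itself rests on the qualitative H\"older regularity of Theorem~\ref{teo:holderf}) that $\|u-h\|_{L^\infty}$ is small when $\|f\|_{L^q}$ is, and then iterates \`a la Caffarelli--Silvestre (Proposition~\ref{prop:caffsilv}). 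The exponent $(sp-N/q)/(p-1)$ arises from the scaling of $f$ under these blow-ups, not from a Sobolev embedding inside a Moser loop.
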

We first observe that we ask the solution to be locally bounded. This may seem to be a restriction.
However, this kind of mild regularity for a weak solution comes for free under the standing assumptions on $f$, see Theorem~\ref{teo:loc_bound} and Remark \ref{oss:stupid} below. 
\par
Also, the reader may observe that in the theorem above, there is the assumption $u\in L^{p-1}_{s\,p}(\mathbb{R}^N)$. This guarantees that the term
\[
\int_{\mathbb{R}^N\setminus B_{R}(x_0)}\frac{|u(y)|^{p-1}}{|x_0-y|^{N+s\,p}}\,  dy,
\]
is finite (see Section \ref{sec:not} for the precise definition of this space). It is noteworthy to point out that $u\in L^{p-1}_{s\,p}(\mathbb{R}^N)$ whenever $u$ is globally bounded or grows slower that $|x|^{s\,p/(p-1)}$ at infinity.
\vskip.2cm
Before going further and giving the ideas and the details of the proof of Theorem~\ref{mainthm}, we give an example showing that for every finite $q$ satisfying the assumptions of Theorem~\ref{mainthm} and every $0<\varepsilon\ll 1$, there exists $u_\varepsilon\in W^{s,p}_{\rm loc}\cap L^\infty_{\rm loc}\cap L_{s\,p}^{p-1}$ such that 
\[
(-\Delta_p)^s u_\varepsilon\in L^q_{\rm loc},\qquad u_\varepsilon\in C^{\Theta+\varepsilon} \qquad \mbox{ but }\qquad u_\varepsilon\not\in \bigcup_{\beta>\Theta+\varepsilon} C^\beta.
\]
This shows that in general the exponent $\Theta$ found in Theorem~\ref{mainthm} can not be improved.
\begin{exa}[Sharpness of the H\"older exponent, $q<\infty$]
\label{exa:sharp}
For $N\ge 2$, we take 
\[
2<p\le N+1\qquad \mbox{ and }\qquad 0<s\le \frac{p-1}{p}.
\] 
Observe that with these choices, we automatically get $s\,p\le N$. We then fix an exponent $N/(s\,p)<q<\infty$ and observe that 
\[
\Theta(N,s,p,q)=\min\left\{\frac{1}{p-1}\,\left(s\,p-\frac{N}{q}\right),\,1\right\}=\frac{1}{p-1}\,\left(s\,p-\frac{N}{q}\right).
\] 
again thanks to our choices. Finally, we consider the function $u_\varepsilon(x)=|x|^{\Theta+\varepsilon}$, where 
\[
0<\varepsilon<\frac{1}{p-1}\,\frac{N}{q}.
\]
By construction, we have that 
\[
\Theta+\varepsilon>s \qquad \mbox{ and thus }\qquad u_\varepsilon\in C^{\Theta+\varepsilon}_{\rm loc}(\mathbb{R}^N)\subset W^{s,p}_{\rm loc}(\mathbb{R}^N),
\]
and
\[
(\Theta+\varepsilon)\,(p-1)<s\,p\qquad \mbox{ and thus }\qquad u_\varepsilon\in L^{p-1}_{s\,p}(\mathbb{R}^N).
\]
By using the homogeneity and radial symmetry of $u_\varepsilon$ and the properties of the operator $(-\Delta_p)^s$, it is not difficult to see that $u_\varepsilon\in W^{s,p}_{\rm loc}(\mathbb{R}^N)\cap L^\infty_{\rm loc}(\mathbb{R}^N)\cap L^{p-1}_{s\,p}(\mathbb{R}^N)$ is a local weak solution of 
\[
(-\Delta_p)^s u=f,\qquad \mbox{ with } f(x)=C_{s,p,\alpha}\,|x|^{(\Theta+\varepsilon-s)\,(p-1)-s}.
\] 
We observe that 
\[
f\in L^q_{\rm loc}(\mathbb{R}^N) \quad \Longleftrightarrow\quad \frac{N}{q}>s\,p-(\Theta+\varepsilon)\,(p-1)\quad \Longleftrightarrow \quad \Theta+\varepsilon>\frac{1}{p-1}\,\left(s\,p-\frac{N}{q}\right),
\]
and the latter is true, thanks to the definition of $\Theta$. We also observe that 
\[
u_\varepsilon\not\in C^\beta_{\rm loc}(\mathbb{R}^N),\qquad \mbox{ for any }\beta>\Theta+\varepsilon,
\]
since in this case
\[
[u_\varepsilon]_{C^{0,\beta}}(B_R)=\sup_{x,y\in B_R} \frac{|u_\varepsilon(x)-u_\varepsilon(y)|}{|x-y|^\beta}\ge \sup_{x\in B_R} \frac{|u_\varepsilon(x)-u_\varepsilon(0)|}{|x|^\beta}=\sup_{x\in B_R} |x|^{\Theta+\varepsilon-\beta}=+\infty.
\]
\end{exa}
\vskip.2cm
As for the case $q=\infty$, we observe that for $s= (p-1)/p$ the exponent $\Theta$ reduces to $1$.
We now give an example showing that in general this exponent cannot be reached, already in the linear case $p=2$.  
Namely, for $N \geq 2$
one can find $u$ such that
\[
(-\Delta)^{\frac{1}{2}} u \in L^\infty_{\rm loc}\qquad \not\Rightarrow\qquad  u\not\in C^{0,1}_{\rm loc}.
\]
This the very same reason\footnote{Essentially, both statements are a consequence of the fact that the Riesz transforms do not map $L^\infty$ into $L^\infty$.} for the failure of the Calder\'on-Zygmund estimates for the Laplacian in the borderline case of $L^\infty$, 
\[
\Delta u \in L^\infty_{\rm loc}\qquad \not \Rightarrow\qquad u\not\in C^{1,1}_{\rm loc}.
\]

\begin{exa}[Sharpness of the H\"older exponent, $s=1/2$, $p=2$ and $q=\infty$]\label{exa:sharpinfty}
For $s=1/2$, $p = 2$ and $q = \infty$, we have
\[
\Theta(N,s,p,q) =1,
\]
as already noticed.
We denote by $B \subset \mathbb{R}^N$ the unit ball and set for $N \geq 3$ the following {\it Riesz potential}
\[
 u(x) := \int_{B} |x-y|^{1-N}\, dy,\qquad \mbox{ for }x\in\mathbb{R}^N,
\]
i.e. the convolution between the singular kernel $I_1(x)=|x|^{1-N}$ and the characteristic function $1_B$. We first observe that by \cite[Theorem 1, Chapter V]{St} and using the fact that $1_B\in L^1(\mathbb{R}^N)\cap L^\infty(\mathbb{R}^N)$, we have
\[
u \in L^m(\mathbb{R}^N),\qquad\mbox{ for any } \frac{N}{N-1}<m\le \infty.
\]
In particular, $u\in L^2(\mathbb{R}^N)$ for $N\ge 3$.
% 
% Then by \cite[Lemma 2, Chapter V]{St} we have
% \begin{equation}
% \label{half}
% |\xi|\,\mathcal{F}[u](\xi)=C\,\mathcal{F}[1_B](\xi),\qquad \mbox{ for }\xi\in\mathbb{R}^N,
% \end{equation}
% where $\mathcal{F}$ is the Fourier transform and $C$ is a dimensional constant. This in particular implies that
% \[
% (1+|\xi|)\,\mathcal{F}[u]\in L^2(\mathbb{R}^N),
% \]
% which yields that $u\in W^{1/2,2}(\mathbb{R}^N)$, see \cite[Lemma 16.3]{Ta}. From the relation \eqref{half} we also obtain
% \[
% \mathcal{F}^{-1}\left(|\xi|\,\mathcal{F}[u]\right)=C\,1_B,
% \]
% which implies 
By \cite[Theorem 3.22]{Samko} $(-\Delta)^{1/2} u$ coincides (up to a multiplicative constant) with $1_B$.
However, in view of \cite[Lemma 2.15]{Oh}, we find  
\[
 \|\nabla u\|_{L^\infty(\Omega)} = \infty,
\]
for any bounded domain $\Omega$ compactly containing $B$. In particular $u$ is not a Lipschitz function.
\end{exa}

\subsection{A glimpse of the proof}
The starting point is to prove Theorem~\ref{mainthm} for the homogeneous equation
\begin{equation}
\label{MainPDE0}
(-\Delta_p)^s u=0,
\end{equation}
and then use a ``harmonic replacement'' argument to transfer the regularity to solutions of \eqref{MainPDE}, under the standing assumptions on the right-hand side $f$.
\par
The main idea to prove Theorem~\ref{mainthm} for $f\equiv 0$ is quite simple: we differentiate equation \eqref{MainPDE0} in a discrete sense and then test the differentiated equation against monotone power functions of fractional derivatives of the solution, i.e. quantities like
$$
\left|\frac{\delta_h u(x)}{|h|^\vartheta}\right|^{\beta-1}\,\frac{\delta_h u(x)}{|h|^\vartheta},\qquad \mbox{ where } \delta_h u(x):=u(x+h)-u(x).
$$
By choosing $\vartheta>0$ and $\beta\ge 1$ in a suitable way, this establishes a recursive gain in integrability (see Proposition \ref{prop:improve}) of the type
$$
\left\|\frac{\delta_h u(x)}{|h|^s}\right\|_{L^{q+1}(B_{1/2})}\lesssim \left\|\frac{\delta_h u(x)}{|h|^s}\right\|_{L^{q}(B_1)}.
$$
This can be iterated finitely many times in order to obtain
\[
\frac{\delta_h u(x)}{|h|^s}\in L^q_{\rm loc},\qquad \mbox{ for every } q<\infty, \mbox{ uniformly in } |h|\ll 1,
\]
and thus $u\in C_\text{loc}^\delta$ for any $0<\delta<s$. This part of the proof can be considered as a nonlocal counterpart of the method based on the {\it Moser's iteration} that can be used to prove Lipschitz regularity for $p-$harmonic functions (see for example \cite[Proposition 3.3]{DiB}): differentiate the equation in discrete sense (of order one); test with powers of first order differential quotients; use Sobolev inequality to get reverse H\"older inequalities and iterate infinitely many times to get
\[
\frac{\delta_h u(x)}{|h|}\in L^\infty_{\rm loc},\qquad \mbox{ uniformly in } |h|\ll 1,
\]
i.e. local Lipschitz regularity of the solutions 
\par
Once ``almost'' $s-$H\"older regularity is established, we are able to refine the estimates used to prove Proposition \ref{prop:improve}, and obtain a more powerful (yet more complicated) iterative self-improving scheme. This is Proposition \ref{prop:improve2}, giving an estimate of the type
\begin{equation}
\label{imperik}
\sup_{0<|h|< h_0}\left\|\frac{\delta^2_h u}{|h|^{\frac{1+s\,p+\vartheta\,\beta}{\beta-1+p}}}\right\|_{L^{\beta+p-1}(B_{1/2})}^{\beta+p-1}\lesssim \left\|\frac{\delta^2_h u }{|h|^\frac{1+\vartheta\, \beta}{\beta}}\right\|_{L^\beta(B_{1})}^\beta,\qquad \mbox{ whenever }\ \frac{1+\vartheta\, \beta}{\beta}<1.
\end{equation}
This part of the proof is quite peculiar of the nonlocal setting and it seems not to have a local counterpart. This new scheme allows for an iteration both on the differentiability $\vartheta$ and the integrability $\beta$ giving in the end 
\[
u\in C_\text{loc}^{\alpha},\qquad \mbox{ for every } \alpha <\min\left\{\frac{s\,p}{p-1},\,1\right\}.
\] 
We point out that the method can never give better than Lipschitz regularity. This is due to the simple fact that we can only test with first order differential quotients. This can also be seen in the requirement that the order of differentiability $(1+\vartheta\, \beta)/\beta$ on the right-hand side in \eqref{imperik} must be less than $1$.
\par
As announced at the beginning, once we proved this regularity for the homogeneous equation, we can transfer the regularity to the inhomogeneous equation by quite a standard perturbation argument. However, in order not to lose the H\"older exponent in this transfer, we need to employ a blow-up argument similar to that used by Caffarelli and Silvestre in \cite{CS}, see Proposition \ref{prop:caffsilv} below.
\par
Noteworthy is that the methods in this paper are quite similar to those in \cite{Brolin} by the first two authors, where we only test with 
\[
\frac{\delta_h u(x)}{|h|^\vartheta},
\]
and iterate to obtain {\it higher differentiability} results.
\begin{oss}[Dependence of the constant]
If one would carefully keep track of all the constants, one could perhaps iterate the previous scheme infinitely many times and arrive at exactly $C^\Theta$ regularity. Also, if great care was taken in estimating the constants, the results should be stable as $s\nearrow 1$. However, we believe that the proofs are complicated and long enough without explicitly estimating the $s-$dependence at each step.
\par
About the case $s \nearrow 1$, we only point out that the integrability hypothesis on $f$ of Theorem \ref{mainthm} becomes in the limit
\[
f\in L^q_{\rm loc}\qquad \mbox{ with }\qquad \left\{\begin{array}{lr}
q>\dfrac{N}{p},& \mbox{ if } p\le N,\\
&\\
q\ge 1,& \mbox{ if }p>N,
\end{array}
\right.
\]
which is exactly the sharp assumption on $f$ on the scale of Lebesgue spaces, in order to get H\"older regularity of solutions in the local case, i.e. local weak solutions of
\[
-\Delta_p u=f.
\]
We refer to \cite[Corollary 1, page 26]{KM} for this last result. This coincidence and a careful inspection of the results in \cite{KM}, seem to suggest that one could still prove Theorem \ref{mainthm} by slightly weakening the assumptions on $f$ and taking it to belong to some suitable Lorentz or Marcinkiewicz space. However, we prefer not to insist on this point.
\end{oss}
\subsection{Plan of the paper}
 In Section \ref{sec:prel}, we introduce the expedient definitions and notations needed in the paper. We also state and prove some preliminary results such as existence of solutions and functional space embeddings. In Section \ref{sec:basic}, we recall some known results and prove a first H\"older regularity result for the non-homogeneous equation.
 \par 
 The hard work is carried out in Section \ref{sec:almost}, where the almost $C^s$ regularity is proved for the homogeneous equation. It is based on Proposition \ref{prop:improve}, which expresses a gain of integrability of second order differential quotients. This self-improving estimate is then iterated to obtain Theorem~\ref{teo:1}.
 \par
 In Section \ref{sec:higher}, we use Theorem~\ref{teo:1} to obtain an enhanced version of Proposition \ref{prop:improve}, namely Proposition \ref{prop:improve2}, which expresses an interlinked gain between integrability and differentiability of second order differential quotients.
 \par
The iteration of this then results in Theorem~\ref{teo:1higher}, which is nothing but Theorem~\ref{mainthm} for the homogeneous equation. Finally, in Section \ref{sec:last}, we use perturbation arguments in order transfer the regularity to the inhomogeneous equation and prove the main theorem in full generality. The Appendix \ref{sec:app} contains the proof of several crucial pointwise inequalities.

\begin{ack}
We thank Shigehiro Sakata (University of Miyazaki) for having drawn our attention on \cite[Lemma 2.15]{Oh}. 
Part of this work has been carried out during a visit of L. B. \& E. L. to Freiburg in January 2017, a visit of L. B. to Stockholm in February 2017 and of E. L. to Ferrara in May 2017. Hosting institutions are kindly acknowledged. 
\par
E. L.  is supported by the Swedish Research Council, grant no. 2012-3124 and 2017-03736.
A. S. is supported by the German Research Foundation, grant no.~SCHI-1257-3-1, and Daimler-Benz Foundation, grant no. 32-11/16. A. S. was Heisenberg fellow.
\end{ack}

\section{Preliminaries}\label{sec:prel}

\subsection{Notation}\label{sec:not}
Let $1<p<\infty$. We define the monotone function $J_p:\mathbb{R}\to\mathbb{R}$ by
\[
J_p(t)=|t|^{p-2}\, t.
\]
We denote by $\omega_N$ the measure of the $N-$dimensional open ball of radius $1$.
\par
For a measurable function $\psi:\mathbb{R}^N\to\mathbb{R}$ and a vector $h\in\mathbb{R}^N$, we define
\[
\psi_h(x)=\psi(x+h),\qquad \delta_h \psi(x)=\psi_h(x)-\psi(x),\qquad \delta^2_h \psi(x)=\delta_h(\delta_h \psi(x))=\psi_{2\,h}(x)+\psi(x)-2\,\psi_h(x).
\]
We recall that we have the discrete Leibniz rule
\begin{equation}
\label{leibniz}
\delta_h(\varphi\,\psi)=\psi_h\,\delta_h \varphi+\varphi\,\delta_h \psi.
\end{equation}
Let $1\le q<\infty$ and let $\psi\in L^q(\mathbb{R}^N)$, for $0<\beta\le 1$ we set
\[
[\psi]_{\mathcal{N}^{\beta,q}_\infty(\mathbb{R}^N)}:=\sup_{|h|>0} \left\|\frac{\delta_h \psi}{|h|^{\beta}}\right\|_{L^q(\mathbb{R}^N)},
\]
and for $0<\beta<2$
\[
[\psi]_{\mathcal{B}^{\beta,q}_\infty(\mathbb{R}^N)}:=\sup_{|h|>0} \left\|\frac{\delta_h^2 \psi}{|h|^{\beta}}\right\|_{L^q(\mathbb{R}^N)}.
\]
We then introduce the two Besov-type spaces
\[
\mathcal{N}^{\beta,q}_\infty(\mathbb{R}^N)=\left\{\psi\in L^q(\mathbb{R}^N)\, :\, [\psi]_{\mathcal{N}^{\beta,q}_\infty(\mathbb{R}^N)}<+\infty\right\},\qquad 0<\beta\le 1,
\]
and
\[
\mathcal{B}^{\beta,q}_\infty(\mathbb{R}^N)=\left\{\psi\in L^q(\mathbb{R}^N)\, :\, [\psi]_{\mathcal{B}^{\beta,q}_\infty(\mathbb{R}^N)}<+\infty\right\},\qquad 0<\beta<2.
\]
We also need the {\it Sobolev-Slobodecki\u{\i} space}
\[
W^{\beta,q}(\mathbb{R}^N)=\left\{\psi\in L^q(\mathbb{R}^N)\, :\, [\psi]_{W^{\beta,q}(\mathbb{R}^N)}<+\infty\right\},\qquad 0<\beta<1,
\]
where the seminorm $[\,\cdot\,]_{W^{\beta,q}(\mathbb{R}^N)}$ is defined by
\[
[\psi]_{W^{\beta,q}(\mathbb{R}^N)}=\left(\iint_{\mathbb{R}^N\times \mathbb{R}^N} \frac{|\psi(x)-\psi(y)|^q}{|x-y|^{N+\beta\,q}}\,dx\,dy\right)^\frac{1}{q}.
\]
We endow these spaces with the norms
\[
\|\psi\|_{\mathcal{N}^{\beta,q}_\infty(\mathbb{R}^N)}=\|\psi\|_{L^q(\mathbb{R}^N)}+[\psi]_{\mathcal{N}^{\beta,q}_\infty(\mathbb{R}^N)},
\]
\[
\|\psi\|_{\mathcal{B}^{\beta,q}_\infty(\mathbb{R}^N)}=\|\psi\|_{L^q(\mathbb{R}^N)}+[\psi]_{\mathcal{B}^{\beta,q}_\infty(\mathbb{R}^N)},
\]
and
\[
\|\psi\|_{W^{\beta,q}(\mathbb{R}^N)}=\|\psi\|_{L^q(\mathbb{R}^N)}+[\psi]_{W^{\beta,q}(\mathbb{R}^N)}.
\]
A few times we will also work with the space $W^{\beta,q}(\Omega)$ for a subset $\Omega\subset \mathbb{R}^N$,
\[
W^{\beta,q}(\Omega)=\left\{\psi\in L^q(\Omega)\, :\, [\psi]_{W^{\beta,q}(\Omega)}<+\infty\right\},\qquad 0<\beta<1,
\]
where we define
\[
 [\psi]_{W^{\beta,q}(\Omega)}=\left(\iint_{\Omega\times \Omega} \frac{|\psi(x)-\psi(y)|^q}{|x-y|^{N+\beta\,q}}\,dx\,dy\right)^\frac{1}{q}.
\]
We will also denote the average of a function $\psi$ over the ball $B_r(x_0)$ by
\[
\overline{\psi}_{x_0,r}=\fint_{B_r(x_0)} \psi\,dx=\frac{1}{|B_{r}(x_0)|}\,\int_{B_r(x_0)} \psi\,dx.
\]
\subsection{Tail spaces} We introduce the {\it tail space}
\[
L^{q}_{\alpha}(\mathbb{R}^N)=\left\{u\in L^{q}_{\rm loc}(\mathbb{R}^N)\, :\, \int_{\mathbb{R}^N} \frac{|u|^q}{1+|x|^{N+\alpha}}\,dx<+\infty\right\},\qquad q>0 \mbox{ and } \alpha>0,
\]
and define for every $x_0\in\mathbb{R}^N$, $R>0$ and $u\in L^q_{\alpha}(\mathbb{R}^N)$
\[
\mathrm{Tail}_{q,\alpha}(u;x_0,R)=\left[R^{\alpha}\,\int_{\mathbb{R}^N\setminus B_R(x_0)} \frac{|u|^q}{|x-x_0|^{N+\alpha}}\,dx\right]^\frac{1}{q}.
\]
It is not difficult to see that the quantity above is always finite, for a function $u\in L^q_{\alpha}(\mathbb{R}^N)$.
\begin{lm}
Let $\alpha>0$ and $0<q<m<\infty$. Then
\[
L^{m}_{\alpha}(\mathbb{R}^N)\subset L^{q}_{\alpha}(\mathbb{R}^N).
\]
\end{lm}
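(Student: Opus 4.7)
The plan is to reduce the inclusion to a direct application of H\"older's inequality on the measure $d\mu(x) = (1+|x|^{N+\alpha})^{-1}\,dx$. First, I would note that any $u \in L^m_\alpha(\mathbb{R}^N)$ is automatically in $L^q_{\rm loc}(\mathbb{R}^N)$, since on any bounded set $L^m \subset L^q$ by H\"older's inequality (the local part of the definition comes essentially for free). So the content of the lemma is the finiteness of the tail integral $\int_{\mathbb{R}^N} |u|^q\,(1+|x|^{N+\alpha})^{-1}\,dx$.

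For that, I would split the weight as
\[
\frac{1}{1+|x|^{N+\alpha}} = \frac{1}{(1+|x|^{N+\alpha})^{q/m}}\cdot\frac{1}{(1+|x|^{N+\alpha})^{(m-q)/m}},
\]
and apply H\"older's inequality with conjugate exponents $m/q$ and $m/(m-q)$, pairing $|u|^q$ with the first factor. This yields
\[
\int_{\mathbb{R}^N}\frac{|u|^q}{1+|x|^{N+\alpha}}\,dx \le \left(\int_{\mathbb{R}^N}\frac{|u|^m}{1+|x|^{N+\alpha}}\,dx\right)^{q/m}\left(\int_{\mathbb{R}^N}\frac{dx}{1+|x|^{N+\alpha}}\right)^{(m-q)/m}.
\]
The first factor on the right-hand side is finite by the hypothesis $u \in L^m_\alpha(\mathbb{R}^N)$, and the second factor is finite precisely because $\alpha > 0$ (the integrand decays like $|x|^{-(N+\alpha)}$ at infinity). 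This establishes $u \in L^q_\alpha(\mathbb{R}^N)$.

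There is really no obstacle here beyond choosing the correct H\"older exponents; the bookkeeping to ensure the powers of $(1+|x|^{N+\alpha})$ recombine into exactly the weight used in the definition of $L^m_\alpha$ and of the Lebesgue integrable function $(1+|x|^{N+\alpha})^{-1}$ is the only thing to verify. As a byproduct, the argument also gives the quantitative bound
\[
\int_{\mathbb{R}^N}\frac{|u|^q}{1+|x|^{N+\alpha}}\,dx \le C(N,\alpha,q,m)\,\left(\int_{\mathbb{R}^N}\frac{|u|^m}{1+|x|^{N+\alpha}}\,dx\right)^{q/m},
\]
which could be useful later in the paper when controlling nonlocal tails of order $(m,\alpha)$ by those of order $(q,\alpha)$.
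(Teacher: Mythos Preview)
Your proof is correct and essentially the same as the paper's. The paper invokes Jensen's inequality with respect to the finite measure $d\mu=(1+|x|^{N+\alpha})^{-1}\,dx$ and the convex function $t\mapsto t^{m/q}$, which is exactly your H\"older inequality in disguise; the two arguments are interchangeable here. (One small slip in your closing remark: the displayed bound controls the $q$--weighted integral by the $m$--weighted one, not the other way around.)
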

\begin{proof}
This is an easy consequence of Jensen's inequality and the fact that the measure
\[
(1+|x|^{N+\alpha})^{-1}\,dx,
\]
is finite on $\mathbb{R}^N$. We leave the details to the reader.
\end{proof}
The following easy technical results contain computations that will be used many times. We state them as separate results for ease of readability.
\begin{lm}
\label{lm:allarga}
Let $\alpha>0$ and $0< q<\infty$. For every $0<r<R$ and $x_0\in\mathbb{R}^N$ we have
\[
R^\alpha\,\sup_{x\in B_r(x_0)}\int_{\mathbb{R}^N\setminus B_R(x_0)} \frac{|u(y)|^q}{|x-y|^{N+\alpha}}\,dy\le \left(\frac{R}{R-r}\right)^{N+\alpha}\,\mathrm{Tail}_{q,\alpha}(u;x_0,R)^q.
\]
\end{lm}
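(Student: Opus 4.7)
The claim is a purely geometric comparison of the kernels $|x-y|^{-(N+\alpha)}$ and $|x_0-y|^{-(N+\alpha)}$ on the relevant region, and the plan is to derive it by a one-line triangle-inequality estimate which is uniform in $x \in B_r(x_0)$.

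Fix $x \in B_r(x_0)$ and $y \in \mathbb{R}^N\setminus B_R(x_0)$. First I would apply the triangle inequality in the form
\[
|x-y| \ge |x_0-y| - |x-x_0| > |x_0-y| - r.
\]
Then, exploiting the crucial fact that $|x_0-y|\ge R$, I would use $r\le (r/R)\,|x_0-y|$ to replace the additive error by a multiplicative one:
\[
|x_0-y| - r \;\ge\; |x_0-y| - \frac{r}{R}\,|x_0-y| \;=\; \frac{R-r}{R}\,|x_0-y|.
\]
Combining these two inequalities gives the pointwise bound
\[
|x-y| \;\ge\; \frac{R-r}{R}\,|x_0-y|, \qquad \text{for every } x\in B_r(x_0),\ y\notin B_R(x_0).
\]

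Raising this to the power $-(N+\alpha)$ and integrating against $|u(y)|^q\,dy$ over $\mathbb{R}^N\setminus B_R(x_0)$ immediately yields
\[
\int_{\mathbb{R}^N\setminus B_R(x_0)} \frac{|u(y)|^q}{|x-y|^{N+\alpha}}\,dy \;\le\; \left(\frac{R}{R-r}\right)^{N+\alpha}\int_{\mathbb{R}^N\setminus B_R(x_0)} \frac{|u(y)|^q}{|x_0-y|^{N+\alpha}}\,dy.
\]
Multiplying both sides by $R^\alpha$, the right-hand side equals $\left(R/(R-r)\right)^{N+\alpha}\,\mathrm{Tail}_{q,\alpha}(u;x_0,R)^q$, which is independent of $x$. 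Taking the supremum over $x\in B_r(x_0)$ on the left-hand side gives exactly the claimed inequality. There is no real obstacle here: the only substantive step is noticing that the additive shift $r$ can be absorbed as a multiplicative factor $(R-r)/R$ precisely because $|x_0-y|\ge R$ on the domain of integration.
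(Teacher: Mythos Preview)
Your proof is correct and follows essentially the same approach as the paper: both use the triangle inequality together with $|x_0-y|\ge R$ to obtain $|x-y|\ge \frac{R-r}{R}\,|x_0-y|$, from which the claimed bound follows immediately.
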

\begin{proof}
It is sufficient to observe that for $x\in B_r(x_0)$ and $y\in \mathbb{R}^N\setminus B_R(x_0)$, we have
\[
|x-y|\ge |y-x_0|-|x-x_0|\ge |y-x_0|-r\ge |y-x_0|-\frac{r}{R}\,|y-x_0|=\frac{R-r}{R}\,|y-x_0|.
\]
This gives the desired conclusion.
\end{proof}
\begin{lm}
\label{lm:tails}
Let $\alpha>0$ and $0< q<\infty$. Suppose that $B_r(x_0)\subset B_R(x_1)$. Then for every $u\in L^q_\alpha(\mathbb{R}^N)$ we have
\[
\mathrm{Tail}_{q,\alpha}(u;x_0,r)^q\le \left(\frac{r}{R}\right)^\alpha\,\left(\frac{R}{R-|x_1-x_0|}\right)^{N+\alpha}\,\mathrm{Tail}_{q,\alpha}(u;x_1,R)^q+r^{-N}\,\|u\|^q_{L^q(B_R(x_1))}.
\]
If in addition $u\in L^m_{\rm loc}(\mathbb{R}^N)$ for some $q<m\le \infty$, then
\begin{equation}
\label{paura}
\begin{split}
\mathrm{Tail}_{q,\alpha}(u;x_0,r)^q&\le \left(\frac{r}{R}\right)^\alpha\,\left(\frac{R}{R-|x_1-x_0|}\right)^{N+\alpha}\,\mathrm{Tail}_{q,\alpha}(u;x_1,R)^q\\
&+\left(N\,\omega_N\,\frac{m-q}{\alpha\,m+N\,q}\right)^\frac{m-q}{m}\,r^{-\frac{q\,N}{m}}\,\|u\|^q_{L^m(B_R(x_1))}.
\end{split}
\end{equation}
\end{lm}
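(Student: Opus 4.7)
The plan is to split the domain of integration in the tail of $u$ centered at $x_0$ into the exterior $\mathbb{R}^N\setminus B_R(x_1)$ and the annulus $B_R(x_1)\setminus B_r(x_0)$ (which is well-defined thanks to the inclusion $B_r(x_0)\subset B_R(x_1)$), and estimate each piece separately. The outer piece will produce the tail of $u$ centered at $x_1$, while the annular piece will yield the local $L^q$ (or $L^m$) norm.

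For the \textbf{outer piece}, for $x\in\mathbb{R}^N\setminus B_R(x_1)$ we have $|x-x_1|\ge R$, hence by the triangle inequality
\[
|x-x_0|\ge |x-x_1|-|x_1-x_0|\ge \left(1-\frac{|x_1-x_0|}{R}\right)|x-x_1|=\frac{R-|x_1-x_0|}{R}\,|x-x_1|.
\]
Raising to the $N+\alpha$ and using $r^\alpha=(r/R)^\alpha\,R^\alpha$, this immediately produces the first term on the right-hand side of both estimates, with $\mathrm{Tail}_{q,\alpha}(u;x_1,R)^q$ appearing by definition.

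For the \textbf{annular piece}, the crude bound $|x-x_0|\ge r$ combined with $B_R(x_1)\setminus B_r(x_0)\subset B_R(x_1)$ yields
\[
r^\alpha\int_{B_R(x_1)\setminus B_r(x_0)} \frac{|u(x)|^q}{|x-x_0|^{N+\alpha}}\,dx\le r^{-N}\,\|u\|^q_{L^q(B_R(x_1))},
\]
which proves the first estimate. For the refined bound \eqref{paura}, I would apply H\"older's inequality with conjugate exponents $m/q$ and $m/(m-q)$ to obtain
\[
\int_{B_R(x_1)\setminus B_r(x_0)} \frac{|u(x)|^q}{|x-x_0|^{N+\alpha}}\,dx\le \|u\|^q_{L^m(B_R(x_1))}\left(\int_{\mathbb{R}^N\setminus B_r(x_0)} |x-x_0|^{-\frac{(N+\alpha)\,m}{m-q}}\,dx\right)^{\frac{m-q}{m}},
\]
after enlarging the domain from $B_R(x_1)\setminus B_r(x_0)$ to $\mathbb{R}^N\setminus B_r(x_0)$. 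The radial integral on the right is elementary: setting $\gamma=(N+\alpha)m/(m-q)>N$ (the strict inequality coming from $\alpha>0$ and $m>q$), passing to spherical coordinates gives
\[
\int_{\mathbb{R}^N\setminus B_r(x_0)} |x-x_0|^{-\gamma}\,dx=N\,\omega_N\,\frac{m-q}{\alpha\,m+N\,q}\,r^{-\alpha-\frac{N\,q}{m}\cdot\frac{m}{m-q}}.
\]
Raising to the power $(m-q)/m$ and multiplying by $r^\alpha$ collapses the $r$-exponents exactly to $-N\,q/m$, producing the second term of \eqref{paura}. The case $m=\infty$ is recovered by the usual convention (or directly by taking $|u|\le\|u\|_{L^\infty}$ pointwise and integrating).

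The argument has no serious obstacle; the only point requiring care is the bookkeeping of the exponent of $r$ after H\"older's inequality, which must simplify to $-N\,q/m$ to match the claimed statement. This is a straightforward algebraic verification, and the condition $\alpha\,m+N\,q>0$ ensuring convergence of the radial integral is automatic under the standing hypotheses.
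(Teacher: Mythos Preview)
Your proof is correct and follows exactly the same strategy as the paper: split the tail integral into the exterior $\mathbb{R}^N\setminus B_R(x_1)$ and the annulus $B_R(x_1)\setminus B_r(x_0)$, use the triangle inequality on the former and the crude bound $|x-x_0|\ge r$ (respectively H\"older's inequality followed by the radial integral over $\mathbb{R}^N\setminus B_r(x_0)$) on the latter. The only slip is a typo in the exponent of $r$ in your displayed radial integral---the correct exponent before raising to $(m-q)/m$ is $-\frac{\alpha m+Nq}{m-q}$, not $-\alpha-\frac{Nq}{m-q}$---but your subsequent claim that everything collapses to $r^{-Nq/m}$ is right.
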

\begin{proof}
We first observe that
\[
|x-x_0|\ge |x-x_1|-|x_1-x_0|\ge \frac{R-|x_1-x_0|}{R}\,|x-x_1|,\qquad \mbox{ for every }x\not \in B_R(x_1),
\]
and
\[
|x-x_0|\ge r,\qquad \mbox{ for every }x\not\in B_r(x_0).
\]
Then we decompose the tail as follows
\[
\begin{split}
\mathrm{Tail}_{q,\alpha}(u;x_0,r)^{q}&=r^{\alpha}\,\int_{\mathbb{R}^N\setminus B_R(x_1)} \frac{|u|^q}{|x-x_0|^{N+\alpha}}\,dx+r^{\alpha}\,\int_{B_R(x_1)\setminus B_r(x_0)} \frac{|u|^q}{|x-x_0|^{N+\alpha}}\,dx\\
&\le \left(\frac{r}{R}\right)^\alpha\,\left(\frac{R}{R-|x_1-x_0|}\right)^{N+\alpha}\,\mathrm{Tail}_{q,\alpha}(u;x_1,R)^q+r^{-N}\,\int_{B_R(x_1)} |u|^q\,dx.
\end{split}
\]
This gives the first estimate.
\par
In order to get \eqref{paura}, we proceed in a slightly different way to estimate the second integral: by using H\"older inequality
\[
\begin{split}
\int_{B_R(x_1)\setminus B_r(x_0)} \frac{|u|^q}{|x-x_0|^{N+\alpha}}\,dx&\le \|u\|^{q}_{L^m(B_R(x_1))}\,\left(\int_{\mathbb{R}^N\setminus B_r(x_0)} \frac{1}{|x-x_0|^{(N+\alpha)\,\frac{m}{m-q}}}\,dx\right)^\frac{m-q}{m}\\
&= \left(N\,\omega_N\,\frac{m-q}{\alpha\,m+N\,q}\right)^\frac{m-q}{m}\,r^{-\frac{\alpha\,m+N\,q}{m}}\,\|u\|^{q}_{L^m(B_R(x_1))}.
\end{split}
\]
Thus we obtain \eqref{paura} as well.
\end{proof}
\subsection{Embedding inequalities}

The following result can be found for example in \cite[Lemma 2.3]{BS}.
\begin{lm}
\label{lm:fromBtoN}
Let $0<\beta<1$ and $1\le q<\infty$. Then we have the continuous embedding 
\[
\mathcal{B}^{\beta,q}_\infty(\mathbb{R}^N)\hookrightarrow \mathcal{N}^{\beta,q}_\infty(\mathbb{R}^N).
\] 
More precisely, for every $\psi\in \mathcal{B}^{\beta,q}_\infty(\mathbb{R}^N)$ we have
\[
[\psi]_{\mathcal{N}^{\beta,q}_\infty(\mathbb{R}^N)}\le \frac{C}{1-\beta}\,[\psi]_{\mathcal{B}^{\beta,q}_\infty(\mathbb{R}^N)},
\]
for some constant $C=C(N,q)>0$. 
\end{lm}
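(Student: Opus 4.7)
The plan is to express a first-order difference as a telescoping series of second-order differences at dyadically larger scales, plus a remainder term that vanishes by virtue of $\psi \in L^q$. The starting point is the trivial identity
\[
\delta_{2h}^{\phantom{2}}\psi(x) = \psi(x+2h) - \psi(x) = \bigl(\psi(x+2h) - 2\psi(x+h) + \psi(x)\bigr) + 2\bigl(\psi(x+h)-\psi(x)\bigr) = \delta_h^2\psi(x) + 2\,\delta_h\psi(x),
\]
which I would rewrite as $\delta_h \psi(x) = \tfrac{1}{2}\delta_{2h}\psi(x) - \tfrac{1}{2}\delta_h^2 \psi(x)$.

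Iterating this identity $n$ times (each time applying it to the new first-order difference at scale $2^k h$), I expect to arrive at the decomposition
\[
\delta_h \psi(x) = \frac{1}{2^n}\,\delta_{2^n h}\psi(x) \;-\; \sum_{k=0}^{n-1} \frac{1}{2^{k+1}}\,\delta_{2^k h}^2 \psi(x).
\]
Dividing by $|h|^\beta$, rewriting $|h|^\beta = 2^{-k\beta}|2^k h|^\beta$ inside the sum, and taking the $L^q$ norm gives
\[
\left\|\frac{\delta_h \psi}{|h|^\beta}\right\|_{L^q(\mathbb{R}^N)} \le 2^{n(\beta-1)} \left\|\frac{\delta_{2^n h}\psi}{|2^n h|^\beta}\right\|_{L^q(\mathbb{R}^N)} + \frac{1}{2}\sum_{k=0}^{n-1} 2^{k(\beta-1)}\,[\psi]_{\mathcal{B}^{\beta,q}_\infty(\mathbb{R}^N)}.
\]

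Next I would let $n \to \infty$. The first term is bounded crudely by $2^{n(\beta-1)} \cdot 2\|\psi\|_{L^q}/|2^n h|^\beta = 2^{1-n}\,|h|^{-\beta}\,\|\psi\|_{L^q}$, which tends to $0$ since $\psi \in L^q(\mathbb{R}^N)$. The geometric series converges because $\beta < 1$, giving
\[
\left\|\frac{\delta_h \psi}{|h|^\beta}\right\|_{L^q(\mathbb{R}^N)} \le \frac{1}{2}\,\frac{1}{1-2^{\beta-1}}\,[\psi]_{\mathcal{B}^{\beta,q}_\infty(\mathbb{R}^N)}.
\]
Taking the supremum over $h\neq 0$ on the left gives the stated inequality. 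The constant factor $(1-2^{\beta-1})^{-1}$ is comparable, as $\beta \nearrow 1$, to $\bigl((1-\beta)\ln 2\bigr)^{-1}$ by a one-term Taylor expansion of $2^{\beta-1}$, so the whole bound is indeed of the form $C/(1-\beta)$ with a universal constant $C$ (in particular $C$ may be taken independent of $N$ and $q$).

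\textbf{Main obstacle.} There is no deep difficulty here; the only subtle point is choosing the correct direction of the telescoping. A naive iteration that replaces $\delta_h \psi$ by $\delta_{h/2}\psi$ produces the factor $2^{n(1-\beta)}$ which blows up as $n\to\infty$, so one must iterate \emph{upwards} (to scale $2^n h$) and exploit both $\beta<1$ (to sum the geometric series) and $\psi \in L^q$ (to kill the boundary term).
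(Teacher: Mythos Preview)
Your argument is correct and is exactly the standard telescoping proof of this embedding. The paper does not actually give a proof of this lemma; it simply cites \cite[Lemma 2.3]{BS}, where essentially the same dyadic-telescoping argument appears.
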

The next one is contained in\footnote{This embedding should be seen as a Sobolev-type embedding. For the embedding properties of Triebel-Lizorkin and Besov spaces the interested reader is referred to, e.g., \cite[Section 2.2]{RS}} \cite[Proposition 2.4]{BS}.
\begin{prop}
\label{prop:lostinpassing}
Let $1\le q<\infty$ and $0<\alpha<\beta< 1$. Then we have the continuous embedding 
\[
\mathcal{N}^{\beta,q}_\infty(\mathbb{R}^N)\hookrightarrow W^{\alpha,q}(\mathbb{R}^N).
\]
More precisely, for every $\psi\in \mathcal{N}^{\beta,q}_\infty(\mathbb{R}^N)$ we have
\[
[\psi]^q_{W^{\alpha,q}(\mathbb{R}^N)}\le C\,\frac{\beta}{(\beta-\alpha)\,\alpha}\, \left([\psi]_{\mathcal{N}^{\beta,q}_\infty(\mathbb{R}^N)}^q\right)^\frac{\alpha}{\beta}\,\left(\|\psi\|^q_{L^q(\mathbb{R}^N)}\right)^\frac{\beta-\alpha}{\beta},
\]
for some constant $C=C(N,q)>0$.
\end{prop}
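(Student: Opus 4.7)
The plan is to estimate the Gagliardo seminorm by expressing it as an integral of translation-difference norms and then splitting into small and large displacements, using the Nikol'skii-type bound for short translations and the trivial bound for long ones, finally optimizing over the splitting radius.

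First I would use the change of variables $h=y-x$ and Fubini to write
\[
[\psi]^q_{W^{\alpha,q}(\mathbb{R}^N)}=\int_{\mathbb{R}^N}\frac{\|\delta_h\psi\|_{L^q(\mathbb{R}^N)}^q}{|h|^{N+\alpha\,q}}\,dh.
\]
For a parameter $r>0$ to be chosen, I split the $h$-integral as $\int_{|h|<r}+\int_{|h|\ge r}$. On the inner region I use $\|\delta_h\psi\|_{L^q(\mathbb{R}^N)}\le |h|^{\beta}\,[\psi]_{\mathcal{N}^{\beta,q}_\infty(\mathbb{R}^N)}$, which is the very definition of the Nikol'skii seminorm. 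On the outer region I only use the trivial bound $\|\delta_h\psi\|_{L^q(\mathbb{R}^N)}\le 2\,\|\psi\|_{L^q(\mathbb{R}^N)}$. In polar coordinates both pieces are explicit:
\[
\int_{|h|<r}|h|^{(\beta-\alpha)\,q-N}\,dh=\frac{N\,\omega_N}{(\beta-\alpha)\,q}\,r^{(\beta-\alpha)\,q},\qquad \int_{|h|\ge r}|h|^{-\alpha\,q-N}\,dh=\frac{N\,\omega_N}{\alpha\,q}\,r^{-\alpha\,q},
\]
where the convergence of the first integral uses $\beta>\alpha$ and of the second uses $\alpha>0$.

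Putting the two pieces together, I would obtain
\[
[\psi]^q_{W^{\alpha,q}(\mathbb{R}^N)}\le \frac{N\,\omega_N}{q}\left(\frac{r^{(\beta-\alpha)\,q}}{\beta-\alpha}\,[\psi]^q_{\mathcal{N}^{\beta,q}_\infty(\mathbb{R}^N)}+\frac{2^q\,r^{-\alpha\,q}}{\alpha}\,\|\psi\|^q_{L^q(\mathbb{R}^N)}\right).
\]
Now I optimize the parameter $r$ by choosing it so that $r^{\beta\,q}\,[\psi]^q_{\mathcal{N}^{\beta,q}_\infty(\mathbb{R}^N)}=\|\psi\|^q_{L^q(\mathbb{R}^N)}$ (one may assume $[\psi]_{\mathcal{N}^{\beta,q}_\infty(\mathbb{R}^N)}>0$, the degenerate case being trivial). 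With this choice both summands equal
\[
\bigl([\psi]^q_{\mathcal{N}^{\beta,q}_\infty(\mathbb{R}^N)}\bigr)^{\alpha/\beta}\,\bigl(\|\psi\|^q_{L^q(\mathbb{R}^N)}\bigr)^{(\beta-\alpha)/\beta},
\]
so that the bracketed factor reduces to
\[
\frac{1}{\beta-\alpha}+\frac{2^q}{\alpha}\le \frac{2^q\,\beta}{(\beta-\alpha)\,\alpha},
\]
using the identity $1/(\beta-\alpha)+1/\alpha=\beta/((\beta-\alpha)\,\alpha)$. This yields the claimed inequality with $C=C(N,q)=2^q\,N\,\omega_N/q$.

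The only real subtlety will be the optimization step and tracking the precise dependence of the constant on $\alpha$, $\beta$, $\beta-\alpha$; the splitting argument itself is standard and mirrors the proof that Besov/Nikol'skii spaces embed into Sobolev-Slobodecki\u{\i} spaces of slightly lower smoothness. No estimate on $\psi$ beyond the two bounds used above is needed.
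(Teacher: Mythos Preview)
Your proof is correct. The paper does not give its own proof of this proposition but simply cites \cite[Proposition 2.4]{BS}; your argument---rewriting the Gagliardo seminorm as $\int_{\mathbb{R}^N}\|\delta_h\psi\|_{L^q}^q\,|h|^{-N-\alpha q}\,dh$, splitting at a radius $r$, using the Nikol'skii bound for $|h|<r$ and the trivial $L^q$ bound for $|h|\ge r$, then optimizing $r$---is exactly the standard proof and yields the stated constant $C\,\beta/((\beta-\alpha)\alpha)$ with $C=2^qN\omega_N/q$.
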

The following result is a sort of localized version of the previous estimate.
\begin{lm}
\label{lm:erik2}
Suppose $1\le q<\infty$ and $0<\alpha<\beta<1$. Let $u\in L^q_{\rm loc}(\mathbb{R}^N)$ be such that for some $h_0>0$ and some ball $B_r\subset\mathbb{R}^N$ with $r>h_0$ we have
\[
\sup_{0<|h|<h_0} \left\|\frac{\delta^2_h u}{|h|^\beta}\right\|_{L^q(B_r)}<+\infty.
\]
Then for every $\varrho>0$ such that $\varrho+h_0\le r$ we have
\[
[u]^q_{W^{\alpha,q}(B_\varrho)}\le C_1\,\left(\sup_{0<|h|< h_0} \left\|\frac{\delta^2_h u}{|h|^\beta}\right\|_{L^q(B_{r})}^q+\|u\|^q_{L^q(B_r)}\right),
\]
and
\[
\sup_{0<|h|<h_0} \left\|\frac{\delta_h u}{|h|^\beta}\right\|^q_{L^q(B_\varrho)}\le C_2\,\left(\sup_{0<|h|< h_0} \left\|\frac{\delta^2_h u}{|h|^\beta}\right\|_{L^q(B_{r})}^q+\|u\|^q_{L^q(B_r)}\right)
\]
where $B_\varrho$ is concentric with $B_r$. Here $C_1=C_1(N,q,\beta,\alpha,h_0)>0$ and $C_2=C_2(N,q,\beta,h_0)$ are constants that blow up as $\beta\nearrow 1$, $\alpha\nearrow\beta$ or $h_0\searrow 0$.
\end{lm}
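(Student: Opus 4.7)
\medskip

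\noindent\textbf{Proof plan for Lemma \ref{lm:erik2}.}

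The idea is to reduce the local estimate to the global embeddings of Lemma~\ref{lm:fromBtoN} and Proposition~\ref{prop:lostinpassing} by means of a cutoff. Fix a cutoff $\eta\in C^\infty_c(\mathbb{R}^N)$ with $0\le\eta\le 1$, $\eta\equiv 1$ on a ball $B_{\varrho_2}$ with $\varrho_2\ge \varrho+h_0$ and $\operatorname{supp}\eta\subset B_{\varrho_1}$ with $\varrho_1\le r-h_0$, chosen so that $\|\nabla\eta\|_{L^\infty}$ and $\|\nabla^2\eta\|_{L^\infty}$ are bounded by constants depending only on $N$ and the geometric gap (hence only on $N, r, \varrho, h_0$). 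Set $v=\eta\,u$, extended by zero, so $v\in L^q(\mathbb{R}^N)$. The plan is to show $v\in\mathcal{B}^{\beta,q}_\infty(\mathbb{R}^N)$ with the right quantitative bound, and then transfer back to $u$ on $B_\varrho$, where $v$ and $u$ coincide.

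The key algebraic step is the discrete Leibniz rule for second differences: a direct expansion gives the pointwise identity
\[
\delta_h^2(\eta\,u)(x)=\eta(x+h)\,\delta_h^2 u(x)+\delta_h\eta(x+h)\,\delta_h u(x+h)+\delta_h\eta(x)\,\delta_h u(x)+\delta_h^2\eta(x)\,u(x+h).
\]
For $0<|h|\le h_0$, the support of each of the four terms is contained in $B_{\varrho_1+h_0}\subset B_r$, and since $\eta$ is smooth we have $|\delta_h\eta|\le \|\nabla\eta\|_\infty|h|$ and $|\delta_h^2\eta|\le\|\nabla^2\eta\|_\infty|h|^2$. Taking the $L^q(\mathbb{R}^N)$ norm, bounding $|\delta_h u|\le |u_h|+|u|$ in the lower-order pieces, and dividing by $|h|^\beta$ (using $|h|^{1-\beta},|h|^{2-\beta}\le h_0^{2-\beta}$ since $\beta<1$), I will obtain
\[
\left\|\frac{\delta_h^2 v}{|h|^\beta}\right\|_{L^q(\mathbb{R}^N)}\le C(N,\beta,h_0,\eta)\left(\left\|\frac{\delta_h^2 u}{|h|^\beta}\right\|_{L^q(B_r)}+\|u\|_{L^q(B_r)}\right).
\]
For $|h|\ge h_0$, the crude bound $|\delta_h^2 v|\le 4|v|$ combined with $|h|^{-\beta}\le h_0^{-\beta}$ yields the same type of estimate. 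Taking the supremum over all $h\ne 0$, I conclude that $v\in\mathcal{B}^{\beta,q}_\infty(\mathbb{R}^N)$ with
\[
[v]_{\mathcal{B}^{\beta,q}_\infty(\mathbb{R}^N)}^q+\|v\|_{L^q(\mathbb{R}^N)}^q\le C\left(\sup_{0<|h|<h_0}\left\|\frac{\delta_h^2 u}{|h|^\beta}\right\|_{L^q(B_r)}^q+\|u\|_{L^q(B_r)}^q\right).
\]

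With this global estimate in hand, I apply Lemma~\ref{lm:fromBtoN} to pass from $\mathcal{B}^{\beta,q}_\infty(\mathbb{R}^N)$ to $\mathcal{N}^{\beta,q}_\infty(\mathbb{R}^N)$, and then Proposition~\ref{prop:lostinpassing} to pass from $\mathcal{N}^{\beta,q}_\infty(\mathbb{R}^N)$ to $W^{\alpha,q}(\mathbb{R}^N)$, accumulating the constants (which blow up in the advertised ways as $\beta\nearrow 1$ or $\alpha\nearrow\beta$). Since $\eta\equiv 1$ on $B_{\varrho_2}\supset B_{\varrho+h_0}\supset B_\varrho$, one has $v=u$ on $B_\varrho$ and, for $|h|<h_0$ and $x\in B_\varrho$, also $v(x+h)=u(x+h)$, so that $\delta_h v=\delta_h u$ on $B_\varrho$. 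Therefore
\[
[u]^q_{W^{\alpha,q}(B_\varrho)}\le [v]^q_{W^{\alpha,q}(\mathbb{R}^N)},\qquad \sup_{0<|h|<h_0}\left\|\frac{\delta_h u}{|h|^\beta}\right\|_{L^q(B_\varrho)}^q\le [v]^q_{\mathcal{N}^{\beta,q}_\infty(\mathbb{R}^N)},
\]
and both estimates follow.

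The only real obstacle is the bookkeeping in the cutoff/Leibniz step: one must verify the second-difference product identity and track that the $|h|^{1-\beta}$ and $|h|^{2-\beta}$ factors are absorbed using $|h|\le h_0$ (which is why the constants explode as $h_0\searrow 0$). Everything else is a routine invocation of Lemma~\ref{lm:fromBtoN} and Proposition~\ref{prop:lostinpassing} on the globally defined function $v$.
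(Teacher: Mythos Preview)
Your proposal is correct and follows essentially the same route as the paper: localize with a smooth cutoff, expand $\delta_h^2$ of the product by a discrete Leibniz identity, and then invoke the global embeddings $\mathcal{B}^{\beta,q}_\infty\hookrightarrow\mathcal{N}^{\beta,q}_\infty\hookrightarrow W^{\alpha,q}$. The paper does the same, only citing the analogous tools from \cite{Brolin} (its Proposition~2.7 and Lemma~2.3) in place of Lemma~\ref{lm:fromBtoN} and Proposition~\ref{prop:lostinpassing}, and using the alternative (three-term) decomposition $\delta_h^2(u\chi)=\chi_{2h}\,\delta_h^2 u+2\,\delta_h u\,\delta_h\chi_h+u\,\delta_h^2\chi$ rather than your four-term identity; both are valid.

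One small point on the second inequality: your route identifies $\delta_h u$ with $\delta_h v$ on $B_\varrho$, which forces $\eta\equiv 1$ on $B_{\varrho+h_0}$ and hence needs a gap of order $2h_0$ between $\varrho$ and $r$, slightly more than the stated $\varrho+h_0\le r$. The paper avoids this by writing $\chi\,\delta_h u=\delta_h(u\chi)-u_h\,\delta_h\chi$ (so only $\chi\equiv 1$ on $B_\varrho$ is needed) and then passing to $\delta_h^2(u\chi)$. This is a cosmetic fix; in every application of the lemma within the paper the available gap is at least $3h_0$, so your version suffices there.
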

\begin{proof}
We take $\chi\in C_0^\infty(B_{\varrho+h_0/3})$  such that 
\[
0\leq \chi\leq 1, \qquad \chi =1 \text{ in $B_{\varrho}$},\qquad |\nabla \chi|\leq \frac{C}{h_0}\quad \mbox{ and }\quad |D^2 \chi|\leq \frac{C}{h_0^2}.
\]
Note that we have
$$
|\delta_h\chi|\leq \frac{C}{h_0}\,|h|\quad \mbox{ and }\quad|\delta^2_h\chi|\leq \frac{C}{h_0^2}\,|h|^2.
$$
We start by observing that  
\begin{equation}
\label{clearly}
[u]_{W^{\alpha,q}(B_\varrho)}^q\leq [u\,\chi]_{W^{\alpha,q}(\mathbb{R}^N)}^q.
\end{equation}
By using \cite[Proposition 2.7]{Brolin}, we obtain
\[
[u\,\chi]_{W^{\alpha,q}(\mathbb{R}^N)}^q\leq C\,\left(\frac{h_0^{(\beta-\alpha)\,q}}{\beta-\alpha}\,\sup_{0<|h|< h_0}\left\|\frac{\delta_h (u\,\chi)}{|h|^\beta}\right\|_{L^q(\mathbb{R}^N)}^q+\frac{h_0^{-\alpha\,q}}{\alpha}\,\|u\,\chi\|_{L^q(\mathbb{R}^N)}\right)\\
\]
where $C=C(N,q)>0$. We now use \cite[Lemma 2.3]{Brolin} to find
\begin{equation}
\label{asprilla}
[u\,\chi]_{W^{\alpha,q}(\mathbb{R}^N)}^q\leq C\,\left(\sup_{0<|h|<h_0}\left\|\frac{\delta^2_{h} (u\,\chi)}{|h|^{\beta}}\right\|_{L^q(\mathbb{R}^N)}^q+\|u\,\chi\|_{L^q(\mathbb{R}^N)}^q\right),
\end{equation}
where\footnote{The constant blows-up as $\beta\nearrow 1$, $\alpha\nearrow\beta$ and $h_0\searrow 0$.} $C=C(h_0,N,q,\beta,\alpha)$.
We now observe that
$$
\delta^2_h (u\,\chi)=\chi_{2h}\,\delta^2_h u+2\,\delta_h u\, \delta_h \chi_h+u\,\delta^2_h\chi,
$$
therefore for $0<|h|<h_0$
\begin{equation}
\label{stupid}
\begin{split}
\left\|\frac{\delta^2_h (u\,\chi)}{|h|^\beta}\right\|_{L^q(\mathbb{R}^N)}^q&\leq C\,\left(\left\|\frac{\chi_{2h}\,\delta^2_h u}{|h|^\beta}\right\|_{L^q(\mathbb{R}^N)}^q+\left\|\frac{\delta_h u\,\delta_h\chi}{|h|^\beta}\right\|_{L^q(\mathbb{R}^N)}^q+\left\|\frac{u\,\delta^2_h\chi}{|h|^\beta}\right\|_{L^q(\mathbb{R}^N)}^q\right) \\
&\leq C\,\left(\left\|\frac{\delta^2_h u}{|h|^\beta}\right\|_{L^q(B_r)}^q+h_0^{(1-\beta)\,q}\,\|\delta_h u\|_{L^q(B_{\varrho+2\,h_0/3})}^q+h_0^{(2-\beta)\,q}\,\|u\|_{L^q(B_r)}^q\right)\\
&\leq C\left(\left\|\frac{\delta^2_h u}{|h|^\beta}\right\|_{L^q(B_r)}^q+\|u\|_{L^q(B_r)}^q\right)\\
\end{split}
\end{equation}
where we used the estimates on $\nabla \chi$ and $D^2 \chi$ and the triangle inequality to estimate the $L^q$ norm of $\delta_h u$. The constant $C$ depends on $\alpha,\beta,q, N$ and $h_0$ as before. By combining \eqref{clearly}, \eqref{asprilla} and \eqref{stupid}, we obtain
$$
[u]_{W^{\alpha,q}(B_\varrho)}^q\leq C\,\left(\sup_{0<|h|<h_0}\left\|\frac{\delta^2_h u}{|h|^\beta}\right\|_{L^q(B_r)}^q+\|u\|_{L^q(B_r)}^q\right),
$$
thus concluding the proof of the first inequality.
\vskip.2cm\noindent
In order to prove the second inequality, we still use the cut-off function $\chi$ and the Leibniz rule \eqref{leibniz} for $\delta_h(u\,\chi)$. We may write for $0<|h|<h_0$
\[
\begin{split}
\left\|\frac{\delta_h u}{|h|^\beta}\right\|_{L^q(B_\varrho)}^q&\leq \left\|\frac{\delta_h u\,\chi}{|h|^\beta}\right\|_{L^q(\mathbb{R}^N)}^q\\
&\leq C\,\left(\left\|\frac{\delta_h (u\, \chi) }{|h|^\beta}\right\|_{L^q(\mathbb{R}^N)}^q+\left\|\frac{u_h\,\delta_h\chi }{|h|^{\beta}}\right\|_{L^q(\mathbb{R}^N)}^{q}\right)\\
&\leq C\,\left(\left\|\frac{\delta^2_h (u\, \chi) }{|h|^\beta}\right\|_{L^q(\mathbb{R}^N)}^q+\|u_h\|_{L^q(B_{R+2\,h_0/3})}^q+\|u\|_{L^q(B_{R+h_0})}^q \right)\\ 
&\leq C\,\left(\left\|\frac{\delta^2_h (u\,\chi) }{|h|^\beta}\right\|_{L^q(\mathbb{R}^N)}^q+\|u\|_{L^q(B_{R+h_0})}^q\right),
\end{split}
\]
where we have used the bound on $\nabla \chi$ and $D^2 \chi$ and \cite[Lemma 2.3]{Brolin}. Finally we apply \eqref{stupid} to treat the term containing $\delta^2_h (u\,\chi)$.
\end{proof}
In what follows, we set
\[
p^*_s=\left\{\begin{array}{rcll}
\dfrac{N\,p}{N-s\,p},& \mbox{ if } s\,p<N,\\
&\\
+\infty,& \mbox{ if } s\,p>N,
\end{array}
\right.
\qquad\mbox{ and }\qquad (p^*_s)'=\left\{\begin{array}{rcll}
\dfrac{N\,p}{N\,p-N+s\,p},& \mbox{ if } s\,p<N,\\
&\\
1,& \mbox{ if } s\,p>N.
\end{array}
\right..
\]
\begin{prop}[Poincar\'e \& Sobolev]
\label{prop:sobpoin1}
Suppose $1<p<\infty$ and $0<s<1$. Let $\Omega\subset\mathbb{R}^N$ be an open and bounded set. For every $u\in W^{s,p}(\mathbb{R}^N)$ such that $u=0$ almost everywhere in $\mathbb{R}^N\setminus\Omega$, we have
\[
\|u\|^p_{L^{p^*_s}(\Omega)}\le C_1\,[u]^p_{W^{s,p}(\mathbb{R}^N)}, \qquad \mbox{ if }  s\,p<N,
\]
\[
\|u\|^p_{L^\infty(\Omega)}\le C_2\,|\Omega|^{\frac{s\,p}{N}-1}\,[u]^p_{W^{s,p}(\mathbb{R}^N)}, \qquad \mbox{ if }  s\,p>N,
\]
\[
\|u\|^p_{L^{q}(\Omega)}\le C_3\,|\Omega|^{\frac{p}{q}+\frac{s\,p}{N}-1}\,[u]^p_{W^{s,p}(\mathbb{R}^N)}, \qquad \mbox{ for every } 1\le q<\infty, \mbox{ if }  s\,p=N
\]
for constants $C_1=C_1(N,p,s)>0$, $C_2=C_2(N,p,s)>$ and $C_3=C_3(N,p,s,q)>0$. In particular, we also have
\begin{equation}
\label{poin}
\|u\|^p_{L^p(\Omega)}\le C\,|\Omega|^\frac{s\,p}{N}\,[u]^p_{W^{s,p}(\mathbb{R}^N)},
\end{equation}
for some $C=C(N,p,s)>0$.
\end{prop}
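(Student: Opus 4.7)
The plan is to prove \eqref{poin} first, since it holds in all three cases and serves as an input to the arguments for $sp>N$ and $sp=N$. Pick $R=(2\,|\Omega|/\omega_N)^{1/N}$, so $|B_R|=2\,|\Omega|$ and for every $x\in\Omega$ the set $A_x:=B_R(x)\setminus\Omega$ has measure at least $|\Omega|$. Since $u\equiv 0$ on $A_x$, for $y\in A_x$ we have $|u(x)|^p=|u(x)-u(y)|^p$; inserting the factor $|x-y|^{-(N+sp)}|x-y|^{N+sp}$, bounding $|x-y|\le R$, and integrating in $y$ over $A_x$ and then in $x$ over $\Omega$ gives
\[
|\Omega|\,\|u\|_{L^p(\Omega)}^p\le R^{N+sp}\,[u]_{W^{s,p}(\mathbb{R}^N)}^p,
\]
which is \eqref{poin}. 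The case $sp<N$ is then immediate from the classical fractional Sobolev embedding $\|u\|_{L^{p^*_s}(\mathbb{R}^N)}^p\le C\,[u]_{W^{s,p}(\mathbb{R}^N)}^p$ (see e.g.~\cite[Theorem 6.5]{DNPV}), using that $u=0$ outside $\Omega$ so that $\|u\|_{L^{p^*_s}(\Omega)}=\|u\|_{L^{p^*_s}(\mathbb{R}^N)}$.

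For the case $sp>N$, I would use the fractional Morrey-type inequality on balls (a consequence of the embedding $W^{s,p}\hookrightarrow C^{0,s-N/p}$ combined with an average estimate): for every $x\in\Omega$ and every $\varrho>0$,
\[
\|u\|_{L^\infty(B_\varrho(x))}^p\le C\,\Bigl(\varrho^{sp-N}\,[u]_{W^{s,p}(\mathbb{R}^N)}^p+\varrho^{-N}\,\|u\|_{L^p(B_\varrho(x))}^p\Bigr).
\]
Using $\|u\|_{L^p(B_\varrho(x))}\le\|u\|_{L^p(\Omega)}$, taking the supremum in $x\in\Omega$, inserting \eqref{poin}, and optimizing with the natural choice $\varrho=|\Omega|^{1/N}$ makes both terms scale like $|\Omega|^{(sp-N)/N}\,[u]_{W^{s,p}}^p$, yielding the stated bound.

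For the borderline case $sp=N$, I would perturb the smoothness. Fix $q<\infty$ and pick $s'\in (0,s)$ with $s'\,p<N$ close enough to $s$ so that $p^*_{s'}=Np/(N-s'p)>q$; this is possible since $p^*_{s'}\nearrow+\infty$ as $s'\nearrow s$. Splitting the Gagliardo seminorm at the scale $r=|\Omega|^{1/N}$,
\[
[u]_{W^{s',p}(\mathbb{R}^N)}^p\le r^{(s-s')p}\,[u]_{W^{s,p}(\mathbb{R}^N)}^p+C\,r^{-s'p}\,\|u\|_{L^p(\Omega)}^p,
\]
the second piece uses only $|u(x)-u(y)|^p\le 2^{p-1}(|u(x)|^p+|u(y)|^p)$ with $|x-y|\ge r$ outside the diagonal. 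Inserting \eqref{poin} (and using $sp=N$, so $r^{sp}=|\Omega|$) both terms become $C\,|\Omega|^{(s-s')p/N}\,[u]_{W^{s,p}}^p$. Applying the already-proven case $s'p<N$ and then H\"older's inequality from $L^{p^*_{s'}}(\Omega)$ to $L^q(\Omega)$ produces exactly the factor $|\Omega|^{p/q+sp/N-1}=|\Omega|^{p/q}$ that appears in the statement, once one checks that $p/p^*_{s'}=(s-s')/s$ precisely when $sp=N$.

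The main obstacle will be carefully matching the $|\Omega|$-exponents in the borderline $sp=N$ case: the whole argument succeeds exactly because the Sobolev scaling $p/p^*_{s'}$ agrees with $(s-s')p/N$ under the constraint $sp=N$. The Morrey inequality on balls used in the supercritical case is standard, but one should take care that the seminorm on the right-hand side is the one over $\mathbb{R}^N$, which is legitimate because it dominates the seminorm over $B_\varrho(x)$.
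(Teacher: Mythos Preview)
The paper states this proposition without proof, treating it as a standard package of fractional Sobolev--Poincar\'e inequalities; there is therefore no ``paper's own proof'' to compare against. Your argument is correct and self-contained: the Poincar\'e inequality via the fat-complement trick (choosing $R$ so that $|B_R(x)\setminus\Omega|\ge|\Omega|$) is clean and gives the right $|\Omega|^{sp/N}$ scaling; the subcritical case is the standard fractional Sobolev inequality; the supercritical case via the Morrey bound plus the choice $\varrho=|\Omega|^{1/N}$ balances the two terms exactly; and the borderline $sp=N$ case via perturbation to $s'<s$ works because the identity $p/p^*_{s'}=(N-s'p)/N=(s-s')p/N$ makes the H\"older loss cancel the interpolation gain, yielding precisely $|\Omega|^{p/q}$.

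One minor practical point: your citation \texttt{[DNPV]} is not in the paper's bibliography, so if this were inserted you would need either to add the reference or to point to something already present (for instance~\cite{Ad}) for the fractional Sobolev inequality.
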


\begin{comment}
\begin{prop}[Poincar\'e \& Sobolev]
\label{prop:sobpoin}
Suppose $1<p<\infty$ and $0<s<1$. Let $\Omega\Subset \Omega'\subset\mathbb{R}^N$ be two open and bounded sets. For every $u\in W^{s,p}(\Omega')$ such that $u=0$ almost everywhere in $\Omega'\setminus\Omega$, we have
\begin{equation}
\label{poin}
\|u\|^p_{L^p(\Omega)}\le \frac{\mathrm{diam}(\Omega')^{N+s\,p}}{|\Omega'\setminus\Omega|}\,[u]^p_{W^{s,p}(\Omega')},
\end{equation}
and
\[
\|u\|^p_{L^{p^*_s}(\Omega)}\le C_1\,[u]^p_{W^{s,p}(\Omega')}, \qquad \mbox{ if }  s\,p\not=N,
\]
\[
\|u\|^p_{L^{q}(\Omega)}\le C_2\,[u]^p_{W^{s,p}(\Omega')}, \qquad \mbox{ for every } 1\le q<\infty, \mbox{ if }  s\,p=N
\]
for constants $C_1=C_1(N,p,s,\Omega,\Omega')>0$ and $C_2=C_2(N,p,s,q,\Omega,\Omega')>0$, which blow-up as $|\Omega'\setminus \Omega|$ goes to $0$.
\end{prop}
\begin{proof}
The first inequality \eqref{poin} can be proved for example by mimicking the proof of \cite[Lemma 2.4]{bralinpar}, we leave the details to the reader.
\par 
As for the second one, we focus for simplicity on the case $s\,p\not =N$: we take an open set with Lipschitz boundary $E$ such that $\Omega\Subset E\Subset \Omega'$. Then by Sobolev embedding (see \cite[Theorem 7.57]{Ad}) we know that
\[
\|u\|^p_{L^{p^*_s}(\Omega)}=\|u\|^p_{L^{p^*_s}(E)}\le C\,\left([u]^p_{W^{s,p}(E)}+\|u\|^p_{L^p(E)}\right)\le C\,\left([u]^p_{W^{s,p}(\Omega')}+\|u\|^p_{L^p(\Omega)}\right).
\]
We can now use \eqref{poin} in order to control the last $L^p$ norm.
\end{proof}
\end{comment}

In what follows, we will use the Campanato space $\mathcal{L}^{q,\lambda}$, we refer to \cite[Chapter 2]{Gi} for the relevant definition.
\begin{teo}[Morrey-type embedding]
\label{thm:Nholder}
Let $1\le q<\infty$ and $0<\beta<1$ be such that $\beta\,q>N$. If $\psi\in \mathcal{N}^{\beta,q}_\infty(\mathbb{R}^N)$, then $\psi \in C^{0,\alpha}_{\rm loc}(\mathbb{R}^N)$, for every $0<\alpha<\beta-N/q$. More precisely, we have
\[
\sup_{x\not=y} \frac{|\psi(x)-\psi(y)|}{|x-y|^\alpha}\le C\,\left([\psi]_{\mathcal{N}^{\beta,q}_\infty(\mathbb{R}^N)}\right)^\frac{\alpha\,q+N}{\beta\,q}\,\left(\|\psi\|_{L^q(\mathbb{R}^N)}\right)^\frac{(\beta-\alpha)q-N}{\beta\,q},
\]
with $C=C(N,q,\alpha,\beta)>0$ which blows-up as $\alpha\nearrow \beta-N/q$.
\end{teo}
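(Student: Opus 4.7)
The plan is to factor the embedding through a Sobolev-Slobodecki\u{\i} space of lower differentiability. Since $0<\alpha<\beta-N/q$, I set
\[
\sigma:=\alpha+\frac{N}{q},
\]
so that $N/q<\sigma<\beta<1$. Applying Proposition \ref{prop:lostinpassing} with $\sigma$ in place of $\alpha$ gives the scaling-invariant interpolation estimate
\[
[\psi]_{W^{\sigma,q}(\mathbb{R}^N)}^{q}\le C\,\frac{\beta}{(\beta-\sigma)\,\sigma}\,\bigl([\psi]_{\mathcal{N}^{\beta,q}_\infty(\mathbb{R}^N)}^q\bigr)^{\sigma/\beta}\,\bigl(\|\psi\|_{L^q(\mathbb{R}^N)}^q\bigr)^{(\beta-\sigma)/\beta}.
\]
The multiplicative constant here blows up as $\sigma\nearrow\beta$, i.e.\ as $\alpha\nearrow \beta-N/q$, which matches the stated behavior of $C$.

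Next I would invoke the classical fractional Morrey embedding
\[
W^{\sigma,q}(\mathbb{R}^N)\hookrightarrow C^{0,\sigma-N/q}(\mathbb{R}^N),\qquad \sigma\,q>N,
\]
in its scaling-invariant form
\[
\sup_{x\neq y}\frac{|\psi(x)-\psi(y)|}{|x-y|^{\sigma-N/q}}\le C\,[\psi]_{W^{\sigma,q}(\mathbb{R}^N)},
\]
where the constant depends only on $N$, $q$ and $\sigma$ (a standard reference is Adams' book on Sobolev spaces). Raising this to the power $q$, combining it with the previous interpolation bound, and recording the identities
\[
\sigma-\frac{N}{q}=\alpha,\qquad \frac{\sigma}{\beta}=\frac{\alpha\,q+N}{\beta\,q},\qquad \frac{\beta-\sigma}{\beta}=\frac{(\beta-\alpha)\,q-N}{\beta\,q},
\]
produces exactly the inequality of the statement.

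I do not foresee any serious obstacle. The only ingredient beyond what has already been quoted in the paper is the fractional Morrey embedding, which is classical. The choice $\sigma=\alpha+N/q$ is forced by the need to lose exactly $N/q$ differentiability when passing from $W^{\sigma,q}$ to H\"older continuity; the condition $\sigma<\beta$ (needed to apply Proposition \ref{prop:lostinpassing}) is in turn equivalent to the hypothesis $\alpha<\beta-N/q$, so the two steps dovetail perfectly.
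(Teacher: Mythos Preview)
Your proposal is correct and follows essentially the same route as the paper: both set the intermediate differentiability to $\sigma=\alpha+N/q$, apply Proposition~\ref{prop:lostinpassing} to interpolate $\mathcal{N}^{\beta,q}_\infty\hookrightarrow W^{\sigma,q}$, and then use the fractional Morrey embedding $W^{\sigma,q}\hookrightarrow C^{0,\sigma-N/q}$. The only cosmetic difference is that the paper proves this last step in place via Campanato spaces, while you cite it as a classical result.
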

\begin{proof}
Let us fix a ball $B$ and for every $x_0\in B$ we consider the ball $B_{\delta}(x_0)$, with $\delta>0$. We are going to show that $\psi$ belongs to the Campanato space $\mathcal{L}^{q,\gamma\,q}(B)$, for every $N/q<\gamma<\beta$.
\par
By a slight abuse of notation we denote by
\[
\overline\psi_{x_0,\delta}=\fint_{B\cap B_\delta(x_0)} \psi\,dx.
\]
Then for $N/q<\gamma<\beta$ we have
\[
\begin{split}
\int_{B\cap B_\delta(x_0)} |\psi-\overline\psi_{x_0,\delta}|^q\,dx&=\int_{B\cap B_\delta(x_0)}\left|\fint_{B\cap B_\delta(x_0)} \big[\psi(x)-\psi(y)\big]\,dy\right|^q\,dx\\
&\le \frac{1}{|B\cap B_\delta(x_0)|}\, \iint_{(B\cap B_\delta(x_0))\times (B\cap B_\delta(x_0))}|\psi(x)-\psi(y)|^q\,dx\,dy\\
&\le C\,\frac{\delta^{N+\gamma\,q}}{|B\cap B_\delta(x_0)|}\,\iint_{(B\cap B_\delta(x_0))\times (B\cap B_\delta(x_0))}\frac{|\psi(x)-\psi(y)|^q}{|x-y|^{N+\gamma\,q}}\,dx\,dy\\
&\le C\,\frac{\delta^{N+\gamma\,q}}{|B\cap B_\delta(x_0)|}\,\iint_{\mathbb{R}^N\times\mathbb{R}^N}\frac{|\psi(x)-\psi(y)|^q}{|x-y|^{N+\gamma\,q}}\,dx\,dy.
\end{split}
\]
We now recall that from Proposition \ref{prop:lostinpassing}
\[
[\psi]^q_{W^{\gamma,q}(\mathbb{R}^N)}\le C\,\frac{\beta}{(\beta-\gamma)\,\gamma}\, \left([\psi]_{\mathcal{N}^{\beta,q}_\infty(\mathbb{R}^N)}^q\right)^\frac{\gamma}{\beta}\,\left(\|\psi\|^q_{L^q(\mathbb{R}^N)}\right)^\frac{\beta-\gamma}{\beta}.
\]
Finally, we observe that
\[
\frac{\omega_N\,\delta^N}{2}\le |B\cap B_\delta(x_0)|,\qquad \mbox{ for every }x_0\in B.
\]
Thus we obtain
\[
\sup_{x_0\in B,\, \delta>0} \delta^{-\gamma\,q}\,\int_{B\cap B_\delta(x_0)} |\psi-\overline\psi_{x_0,\delta}|^q\,dx\le C\, \frac{\beta}{(\beta-\gamma)\,\gamma}\, \left([\psi]_{\mathcal{N}^{\beta,q}_\infty(\mathbb{R}^N)}^q\right)^\frac{\gamma}{\beta}\,\left(\|\psi\|^q_{L^q(\mathbb{R}^N)}\right)^\frac{\beta-\gamma}{\beta}.
\]
This implies that $\psi\in \mathcal{L}^{q,\gamma\,q}(B)$, for every ball $B\subset\mathbb{R}^N$ and every $N/q<\gamma<\beta$. By using that $\mathcal{L}^{q,\gamma\,q}(B)$ is isomorphic to $C^{0,\alpha}(\overline B)$ (see \cite[Theorem 2.9]{Gi}) with $\alpha=\gamma-N/q$ and
\[
[\psi]_{C^{0,\alpha}(B)}\le C\, \left(\sup_{x_0\in B,\, \delta>0} \delta^{-\gamma\,q}\,\int_{B\cap B_\delta(x_0)} |\psi-\overline\psi_{x_0,\delta}|^q\,dx\right)^\frac{1}{q},
\]
we get the desired conclusion on the ball $B$. The constant $C>0$ above does not depend on the size of the ball $B$. Thus by the arbitrariness of $B$, we get the conclusion.
\end{proof}

\subsection{Existence}\label{sec:ex}
We start with the following
\begin{defi}[Local weak solution]
\label{defi:localweak}
Suppose $1<p<\infty$ and $0<s<1$. Let $\Omega\subset\mathbb{R}^N$ be an open and bounded set and $f\in L^q_{\rm loc}(\Omega)$, with
\[
q\ge (p^*_s)'\quad \mbox{ if } s\,p\not =N\qquad \mbox{ or }\qquad q>1\quad \mbox{ if }s\,p=N.
\]
We say that $u\in W^{s,p}_{\rm loc}(\Omega)\cap L^{p-1}_{sp}(\mathbb{R}^N)$ is a {\it local weak solution} of $(-\Delta_p)^s u=f$ in $\Omega$ if  
\begin{equation}
\label{equation}
\iint_{\mathbb{R}^N\times\mathbb{R}^N} \frac{J_p(u(x)-u(y))\,\Big(\varphi(x)-\varphi(y)\Big)}{|x-y|^{N+s\,p}}\,dx\,dy=\int_\Omega f\,\varphi\,dx,
\end{equation}
for every $\varphi\in W^{s,p}(\Omega)$ compactly supported in $\Omega$.
\end{defi}
We now want to detail the notion of weak solutions to a Dirichlet boundary value problem for $(-\Delta_p)^s$. With this aim, we introduce the following space: given $\Omega\Subset\Omega'\subset\mathbb{R}^N$ open and bounded sets and $g\in L^{p-1}_{s\,p}(\mathbb{R}^N)$, we define
\[
X^{s,p}_g(\Omega,\Omega'):=\{v\in W^{s,p}(\Omega')\cap L^{p-1}_{s\,p}(\mathbb{R}^N)\, :\, v=g\ \mbox{ a.\,e. in }\mathbb{R}^N\setminus \Omega \}.
\]
\begin{defi}[Dirichlet problem]
Suppose $1<p<\infty$ and $0<s<1$. Let $\Omega\Subset \Omega'\subset\mathbb{R}^N$ be two open and bounded sets, $f\in L^q(\Omega)$, with
\[
q\ge (p^*_s)'\quad \mbox{ if } s\,p\not =N\qquad \mbox{ or }\qquad q>1\quad \mbox{ if }s\,p=N,
\]
and $g\in L^{p-1}_{sp}(\mathbb{R}^N)$. We say that $u\in X^{s,p}_g(\Omega,\Omega')$ is a {\it weak solution} of the boundary value problem
\begin{equation}
\label{BVP}
\left\{\begin{array}{rcll}
(-\Delta_p)^s\,u&=&f,&\mbox{ in }\Omega,\\
u&=&g,& \mbox{ in }\mathbb{R}^N\setminus \Omega,
\end{array}
\right.
\end{equation}
if \eqref{equation} holds
for every $\varphi\in X^{s,p}_0(\Omega,\Omega')$.
\end{defi}
About the space $X^{s,p}_0(\Omega,\Omega')$, the following simple technical result will be useful.
\begin{lm}
\label{lm:cazzata}
Let $1<p<\infty$ and $0<s<1$. Let $\Omega\Subset \Omega'\subset\mathbb{R}^N$ be two open and bounded sets. Then for every $u\in X^{s,p}_0(\Omega,\Omega')$ we have
\[
[u]^p_{W^{s,p}(\mathbb{R}^N)}\le [u]^p_{W^{s,p}(\Omega')}+\frac{C}{s}\,\mathrm{dist}(\Omega,\mathbb{R}^N\setminus \Omega')^{-s\,p}\,\|u\|_{L^p(\Omega)}^p,
\]
for some $C=C(N,p)>0$. In particular, we have the continuous embedding $X^{s,p}_0(\Omega,\Omega')\hookrightarrow W^{s,p}(\mathbb{R}^N)$.
\end{lm}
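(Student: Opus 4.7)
The plan is a direct splitting of the Gagliardo double integral over $\mathbb{R}^N\times \mathbb{R}^N$, exploiting that $u\equiv 0$ outside $\Omega$. Writing $d=\mathrm{dist}(\Omega,\mathbb{R}^N\setminus\Omega')$, I decompose
\[
[u]^p_{W^{s,p}(\mathbb{R}^N)}=\iint_{\Omega'\times\Omega'}+\,2\iint_{\Omega'\times(\mathbb{R}^N\setminus\Omega')}+\iint_{(\mathbb{R}^N\setminus\Omega')\times(\mathbb{R}^N\setminus\Omega')},
\]
where each double integral carries the kernel $|u(x)-u(y)|^p/|x-y|^{N+s\,p}$. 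The first piece is exactly $[u]^p_{W^{s,p}(\Omega')}$, the last one vanishes since $u\equiv 0$ on $\mathbb{R}^N\setminus\Omega\supset\mathbb{R}^N\setminus\Omega'$, and the mixed term reduces, for the same reason, to an integral over $\Omega\times(\mathbb{R}^N\setminus\Omega')$ of $|u(x)|^p/|x-y|^{N+s\,p}$.

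For the mixed term, I use that any $x\in\Omega$ and $y\in\mathbb{R}^N\setminus\Omega'$ satisfy $|x-y|\geq d$, so $\mathbb{R}^N\setminus\Omega'\subset\mathbb{R}^N\setminus B_d(x)$. A change to polar coordinates then gives
\[
\int_{\mathbb{R}^N\setminus\Omega'}\frac{dy}{|x-y|^{N+s\,p}}\le \int_{\mathbb{R}^N\setminus B_d(x)}\frac{dy}{|x-y|^{N+s\,p}}=\frac{N\,\omega_N}{s\,p}\,d^{-s\,p}.
\]
Integrating in $x$ over $\Omega$ produces the bound $(N\,\omega_N)/(s\,p)\,d^{-s\,p}\,\|u\|_{L^p(\Omega)}^p$. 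Multiplying by the factor $2$ from the two symmetric mixed pieces yields the claimed estimate with $C=2\,N\,\omega_N/p$.

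Finally, for the continuous embedding, since $u\equiv 0$ outside $\Omega\Subset\Omega'$, we have $\|u\|_{L^p(\mathbb{R}^N)}=\|u\|_{L^p(\Omega)}\le \|u\|_{L^p(\Omega')}$, and combined with the seminorm bound just obtained, we control $\|u\|_{W^{s,p}(\mathbb{R}^N)}$ by a constant (depending on $d$ and $s$) times $\|u\|_{W^{s,p}(\Omega')}$, giving the embedding. No step here is expected to present a real obstacle: the whole argument is a careful bookkeeping of where $u$ vanishes, plus the elementary radial computation of the tail integral of $|x-y|^{-N-s\,p}$.
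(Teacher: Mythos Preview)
Your proof is correct and follows essentially the same approach as the paper: split the Gagliardo seminorm according to $\Omega'$ and its complement, use that $u\equiv 0$ outside $\Omega$ to reduce the mixed term, and bound the resulting tail integral by the radial computation over $\mathbb{R}^N\setminus B_d(x)$. The only cosmetic difference is that the paper writes $d(x)=\mathrm{dist}(x,\mathbb{R}^N\setminus\Omega')$ and then uses $d(x)\ge d$, while you go directly to the uniform lower bound $d$; you also make the constant $C=2N\omega_N/p$ explicit.
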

\begin{proof}
We take $u\in X^{s,p}_0(\Omega,\Omega')$ and then write
\[
\begin{split}
[u]^p_{W^{s,p}(\mathbb{R}^N)}&=\iint_{\mathbb{R}^N\times\mathbb{R}^N} \frac{|u(x)-u(y)|^p}{|x-y|^{N+s\,p}}\,dx\,dy\\
&=\iint_{\Omega'\times\Omega'} \frac{|u(x)-u(y)|^p}{|x-y|^{N+s\,p}}\,dx\,dy+2\, \iint_{\Omega\times(\mathbb{R}^N\setminus\Omega')}\frac{|u(x)|^p}{|x-y|^{N+s\,p}}\,dx\,dy,
\end{split}
\]
where we used that $u\equiv 0$ outside $\Omega$. It is only left to observe that
\[
\begin{split}
\iint_{\Omega\times(\mathbb{R}^N\setminus\Omega')}\frac{|u(x)|^p}{|x-y|^{N+s\,p}}\,dx\,dy&= \int_\Omega |u|^p\,\left(\int_{\mathbb{R}^N\setminus\Omega'} \frac{dy}{|x-y|^{N+s\,p}}\right)\,dx\\
&\le  \int_\Omega |u|^p\,\left(\int_{\mathbb{R}^N\setminus B_{d(x)}(x)} \frac{dy}{|x-y|^{N+s\,p}}\right)\,dx,
\end{split}
\]
where $d(x)=\mathrm{dist}(x,\mathbb{R}^N\setminus\Omega')$. By computing the last integral and using that $d\ge \mathrm{dist}(\Omega,\mathbb{R}^N\setminus\Omega')$. we get the desired conclusion.
\end{proof}
By enforcing the assumptions on $g$, a solution to \eqref{BVP} does exist. This is the content of the next result.
\begin{prop}[Existence]
\label{prop:death!}
Suppose $1<p<\infty$ and $0<s<1$. Let $\Omega\Subset \Omega'\subset\mathbb{R}^N$ be two open and bounded sets, $f\in L^q(\Omega)$, with
\[
q\ge (p^*_s)'\quad \mbox{ if } s\,p\not =N\qquad \mbox{ or }\qquad q>1\quad \mbox{ if }s\,p=N,
\]
and $g\in W^{s,p}(\Omega')\cap L^{p-1}_{sp}(\mathbb{R}^N)$. Then problem \eqref{BVP} admits a unique weak solution $u\in X^{s,p}_g(\Omega,\Omega')$. %Moreover, we have the energy estimate
%\[
%[u]_{W^{s,p}(\Omega')}\le C\,\left([g]_{W^{s,p}(\Omega')}+\|g\|_{L^{p}(B_R)}+\left(\int_{\mathbb{R}^N\setminus B_R} \frac{|g(y)|^{p-1}}{|y|^{N+s\,p}}\,dy\right)^\frac{1}{p-1}+\|f\|_{L^q(\Omega)}^\frac{1}{p-1}\right),
%\]
%where $B_R$ is an open ball centered at the origin, such that $\Omega'\subset B_R$ and $C=C(N,s,p,q,\Omega,\Omega')>0$ is a constant.
\end{prop}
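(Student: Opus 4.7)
I would prove existence by the direct method of the calculus of variations, with the caveat that the naive Dirichlet functional
\[
v\mapsto \frac{1}{p}\iint_{\mathbb{R}^{2N}}\frac{|v(x)-v(y)|^p}{|x-y|^{N+s\,p}}\,dx\,dy-\int_\Omega f\,v\,dx
\]
need not be finite on $X^{s,p}_g(\Omega,\Omega')$, because the tail hypothesis on $g$ gives only $p-1$ moments at infinity, not $p$. To bypass this, I would work on the affine space via the substitution $v=w+g$ with $w\in X^{s,p}_0(\Omega,\Omega')$, and minimise the renormalised functional
\[
\mathcal{G}(w):=\frac{1}{p}\iint_{\mathcal{D}} \frac{|w(x)-w(y)+g(x)-g(y)|^p-|g(x)-g(y)|^p}{|x-y|^{N+s\,p}}\,dx\,dy-\int_\Omega f\,w\,dx,
\]
where $\mathcal{D}:=\mathbb{R}^{2N}\setminus\big((\mathbb{R}^N\setminus\Omega)\times(\mathbb{R}^N\setminus\Omega)\big)$. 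Since $w\equiv 0$ on $\mathbb{R}^N\setminus\Omega$, the terms that $\mathcal{G}$ drops are independent of $w$, so critical points of $\mathcal{G}$ correspond exactly to weak solutions $u=w+g$ of \eqref{BVP}.

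\textbf{Well-posedness of $\mathcal{G}$.} Using the pointwise bound $\bigl||A+B|^p-|A|^p\bigr|\le C_p\bigl(|A|^{p-1}|B|+|B|^p\bigr)$ with $A=g(x)-g(y)$ and $B=w(x)-w(y)$, I would split $\mathcal{D}$ into $\Omega'\times\Omega'$ and its complement in $\mathcal{D}$. On $\Omega'\times\Omega'$ the integrand is controlled by $[g]_{W^{s,p}(\Omega')}^{p-1}[w]_{W^{s,p}(\Omega')}+[w]_{W^{s,p}(\Omega')}^p$ via H\"older's inequality. On the outer region, $w(y)=0$ and $|x-y|\ge d:=\mathrm{dist}(\Omega,\mathbb{R}^N\setminus\Omega')>0$ for $x\in\Omega$, $y\notin\Omega'$; bounding $|g(x)-g(y)|^{p-1}\le C(|g(x)|^{p-1}+|g(y)|^{p-1})$ and integrating in $y$ reduces everything to $\int_\Omega |w||g|^{p-1}\,dx$ plus $\int_\Omega |w(x)|\,\mathrm{Tail}_{p-1,s\,p}(g;x,d)^{p-1}\,dx$, both finite since $g\in W^{s,p}(\Omega')\cap L^{p-1}_{s\,p}(\mathbb{R}^N)$ and $w\in L^p(\Omega)$. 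This is the step where the space $L^{p-1}_{s\,p}$ enters crucially, and it is the \emph{main technical obstacle}: the whole renormalisation is designed precisely so that only $p-1$ moments of $g$ at infinity are needed.

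\textbf{Coercivity and lower semicontinuity.} By Lemma \ref{lm:cazzata}, $X^{s,p}_0(\Omega,\Omega')$ embeds continuously into $W^{s,p}(\mathbb{R}^N)$, so the Poincar\'e–Sobolev bound \eqref{poin} gives $\|w\|_{L^p(\Omega)}\le C\,[w]_{W^{s,p}(\mathbb{R}^N)}$. Expanding the leading integrand via the convexity inequality of Appendix \ref{sec:app}-type (e.g.\ $|A+B|^p\ge |B|^p/C-C\,|A|^p$ in the superquadratic case, valid for $p\ge 2$, or via a Young-type splitting in general), the leading $w$-quadratic/$p$-homogeneous contribution dominates the linear part $\int_\Omega fw$ thanks to the assumption $q\ge (p_s^\ast)'$ (or $q>1$ in the critical case) combined with the Sobolev embedding in Proposition \ref{prop:sobpoin1}. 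This yields coercivity of $\mathcal{G}$ on $X^{s,p}_0(\Omega,\Omega')$. Weak lower semicontinuity of the double integral follows from convexity of $t\mapsto|t|^p$ and Fatou's lemma along a minimising sequence (weakly convergent in $W^{s,p}(\mathbb{R}^N)$, hence strongly in $L^p(\Omega)$, which handles the forcing term).

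\textbf{Conclusion.} The direct method produces a minimiser $w_\ast$; strict convexity of $t\mapsto|t|^p$ combined with the observation that two elements of $X^{s,p}_g(\Omega,\Omega')$ sharing the same trace differ only on $\Omega$ (a set of positive measure if they are distinct) gives uniqueness. Finally, computing $\tfrac{d}{d\varepsilon}\mathcal{G}(w_\ast+\varepsilon\,\varphi)\big|_{\varepsilon=0}=0$ for $\varphi\in X^{s,p}_0(\Omega,\Omega')$, with differentiation under the integral sign justified by dominated convergence and the same bounds used for well-posedness, yields exactly the weak formulation \eqref{equation} for $u=w_\ast+g$.
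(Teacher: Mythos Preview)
Your approach is a genuine alternative to the paper's. The paper does \emph{not} use the direct method: it invokes monotone operator theory, defining $\mathcal{A}(v)$ as the left-hand side of the weak formulation (split into a local piece on $\Omega'\times\Omega'$ and a nonlocal piece on $\Omega\times(\mathbb{R}^N\setminus\Omega')$), checking monotonicity, hemicontinuity and coercivity of $w\mapsto\mathcal{A}(w+g)$ on $X^{s,p}_0(\Omega,\Omega')$, and then appealing to a Browder--Minty type surjectivity result. In fact, in the remark immediately following the proof the paper explicitly notes that the \emph{naive} variational functional requires the stronger hypothesis $g\in L^{p}_{s\,p}(\mathbb{R}^N)$; your renormalised functional $\mathcal{G}$ is designed precisely to circumvent that extra assumption, which is a nice observation.

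There is, however, one genuine soft spot in your coercivity step. You invoke $|A+B|^p\ge |B|^p/C-C\,|A|^p$ and integrate over $\mathcal{D}$, but this produces the term $-\tfrac{C+1}{p}\iint_{\mathcal{D}}|g(x)-g(y)|^p\,|x-y|^{-N-s\,p}\,dx\,dy$, which on $\Omega\times(\mathbb{R}^N\setminus\Omega')$ is \emph{not} known to be finite: that would require $g\in L^{p}_{s\,p}(\mathbb{R}^N)$, exactly the hypothesis your renormalisation was meant to avoid. The fix is easy and in the spirit of your well-posedness argument: on $\Omega'\times\Omega'$ use $|A+B|^p-|A|^p\ge c\,|B|^p-(C+1)|A|^p$ (finite there since $g\in W^{s,p}(\Omega')$), while on the outer strip use the convexity lower bound $|A+B|^p-|A|^p\ge p\,J_p(A)\,B$, which integrates to a bounded \emph{linear} functional of $w$ by the same tail estimates. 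Similarly, ``Fatou's lemma'' is not quite the right justification for weak lower semicontinuity since the renormalised integrand has no sign; rather, use convexity of $\mathcal{G}$ together with strong continuity (or Mazur's lemma). With these two patches your argument goes through. What it buys over the paper's proof is a self-contained variational structure with no black-box operator theorem; what the paper's approach buys is that one never has to verify coercivity or lower semicontinuity of a functional, only monotonicity of an operator, which here is essentially a one-line consequence of the monotonicity of $J_p$.
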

\begin{proof}
We can adapt the proof of \cite[Theorem 1 \& Remark 3]{KKP}, concerning the case $f=0$. In what follows, whenever $X$ is a normed vector space, we denote by $X^*$ its topological dual. We endow the vector space
\[
X^{s,p}_0(\Omega,\Omega')=\{v\in W^{s,p}(\Omega')\cap L^{p-1}_{s\,p}(\mathbb{R}^N)\, :\, v=0 \ \mbox{ a.\,e. in }\mathbb{R}^N\setminus \Omega \},
\]
with the norm of $W^{s,p}(\Omega')$.
This is a separable reflexive Banach space. 
We now introduce the operator $\mathcal{A}:X^{s,p}_g(\Omega;\Omega')\to (W^{s,p}(\Omega'))^*$ defined by
\[
\begin{split}
\langle\mathcal{A}(v),\varphi\rangle&=\iint_{\Omega'\times\Omega'} \frac{J_p(v(x)-v(y))\,\Big(\varphi(x)-\varphi(y)\Big)}{|x-y|^{N+s\,p}}\,dx\,dy\\
&+2\,\iint_{\Omega\times (\mathbb{R}^N\setminus\Omega)'}\frac{J_p(v(x)-g(y))\,\varphi(x)}{|x-y|^{N+s\,p}}\,dx\,dy,\qquad v\in X^{s,p}_g(\Omega;\Omega'),\ \varphi\in W^{s,p}(\Omega'),
\end{split}
\]
where $\langle \cdot,\cdot\rangle$ denotes the relevant duality product. We know that $\mathcal{A}(v)\in (W^{s,p}(\Omega'))^*$ for every $v\in X^{s,p}_g(\Omega;\Omega')$ (see \cite[Remark 1]{KKP}). Moreover, we have that $\mathcal{A}$ has the following properties (see \cite[Lemma 3]{KKP}):
\begin{enumerate}
\item for every $v,u\in X^{s,p}_g(\Omega,\Omega')$, we have
\[
\langle \mathcal{A}(u)-\mathcal{A}(v),u-v\rangle\ge 0;
\]
\item if $\{u_n\}_{n\in\mathbb{N}}\subset X^{s,p}_g(\Omega,\Omega')$ converges in $W^{s,p}(\Omega')$ to $u\in X^{s,p}_g(\Omega,\Omega')$, then
\[
\lim_{n\to\infty} \langle \mathcal{A}(u_n)-\mathcal{A}(u),v\rangle=0\quad \text{for all }v\in W^{s,p}(\Omega');
\]
\item we have
\[
\lim_{\|u\|_{W^{s,p}(\Omega')}\to+\infty} \frac{\langle \mathcal{A}(u)-\mathcal{A}(g),u-g\rangle}{\|u-g\|_{W^{s,p}(\Omega')}}=+\infty.
\]
\end{enumerate} 
Finally, we introduce the modified functional
\[
\mathcal{A}_0(v):=\mathcal{A}(v+g),\qquad \mbox{ for every } v\in X^{s,p}_0(\Omega,\Omega').
\]
We observe that $\mathcal{A}_0:X^{s,p}_0(\Omega;\Omega')\to (W^{s,p}(\Omega'))^*$
and that $X^{s,p}_0(\Omega;\Omega')\subset W^{s,p}(\Omega')$ with continuous injection. This implies that there holds $(W^{s,p}(\Omega'))^*\subset (X^{s,p}_0(\Omega;\Omega'))^*$ as well, still with continuous injection. Thus $\mathcal{A}_0$ can be considered as an operator from $X^{s,p}_0(\Omega;\Omega')$ to its topological dual. Moreover, properties (1), (2) and (3) above imply that $\mathcal{A}_0$ is monotone, coercive and hemicontinuous (see \cite[Chapter II, Section 2]{Sh} for the relevant definitions). It is only left to observe that under the standing assumptions, the linear functional 
\[
T_f:v\mapsto \int_\Omega f\,v\,dx,\qquad v\in X^{s,p}_0(\Omega,\Omega'),
\] 
belongs to the topological dual of $X^{s,p}_0(\Omega,\Omega')$. Indeed, for every $v\in X^{s,p}_0(\Omega;\Omega')$ we have\footnote{We assume for simplicity that $s\,p\not =N$. The borderline case $s\,p=N$ can be treated in the same manner, we leave the details to the reader.}
\[
|T_f(v)|=\left|\int_\Omega f\, v\,dx\right|\le \|f\|_{L^q(\Omega)}\,\|v\|_{L^{q'}(\Omega)}\le |\Omega|^{\frac{1}{q'}-\frac{1}{p^*_s}}\,\|f\|_{L^q(\Omega)}\,\|v\|_{L^{p^*_s}(\Omega)},
\]
and the last term can be controlled by means of the Sobolev inequality in Proposition \ref{prop:sobpoin1}, thanks to the fact that $X^{s,p}_0(\Omega,\Omega')\subset W^{s,p}(\mathbb{R}^N)$ by Lemma \ref{lm:cazzata}. Then by \cite[Corollary 2.2]{Sh} we get existence of $v\in X^{s,p}_0(\Omega,\Omega')$ such that
\[
\langle \mathcal{A}_0(v),\varphi\rangle=\langle T_f,\varphi\rangle,\qquad \mbox{ for every }\varphi \in X^{s,p}_0(\Omega,\Omega').
\]
By definition, this is equivalent to 
\[
\langle \mathcal{A}(v+g),\varphi\rangle=\langle T_f,\varphi\rangle,\qquad \mbox{ for every }\varphi \in X^{s,p}_0(\Omega,\Omega'),
\]
i.e.
\[
\begin{split}
\iint_{\Omega'\times\Omega'} &\frac{J_p(v(x)+g(x)-v(y)-g(y))\,\Big(\varphi(x)-\varphi(y)\Big)}{|x-y|^{N+s\,p}}\,dx\,dy\\
&+2\,\iint_{\Omega\times (\mathbb{R}^N\setminus\Omega')}\frac{J_p(v(x)+g(x)-g(y))\,\varphi(x)}{|x-y|^{N+s\,p}}\,dx\,dy=\int_\Omega f\,\varphi\,dx.
\end{split}
\]
By observing that $v=0$ in $\mathbb{R}^N\setminus\Omega$ and that
\[
\begin{split}
2\,\iint_{\Omega\times (\mathbb{R}^N\setminus\Omega')}&\frac{J_p(v(x)+g(x)-g(y))\,\varphi(x)}{|x-y|^{N+s\,p}}\,dx\,dy\\
&=\iint_{\Omega\times (\mathbb{R}^N\setminus\Omega')}\frac{J_p(v(x)+g(x)-v(y)-g(y))\,\varphi(x)}{|x-y|^{N+s\,p}}\,dx\,dy\\
&-\iint_{(\mathbb{R}^N\setminus\Omega')\times \Omega}\frac{J_p(v(x)+g(x)-v(y)-g(y))\,\varphi(y)}{|x-y|^{N+s\,p}}\,dx\,dy,
\end{split}
\]
this is the same as \eqref{equation}.
Then $v+g$ is the desired solution. Uniqueness now follows from the {\it strict} monotonicity of the operator $\mathcal{A}_0$.
\end{proof}
\begin{oss}[Variational solutions]
We observe that under the slightly stronger assumption $g\in W^{s,p}(\Omega')\cap L^p_{s\,p}(\mathbb{R}^N)$, existence of the solution to \eqref{BVP} can be obtained by solving the following strictly convex variational problem
\[
\min\left\{\mathcal{F}(v)\, :\, v\in X^{s,p}_g(\Omega)\cap L^p_{s\,p}(\mathbb{R}^N)\right\},
\]
where the functional $\mathcal{F}$ is defined by
\[
\mathcal{F}(v)=\frac{1}{p}\,\iint_{\Omega'\times \Omega'} \frac{|v(x)-v(y)|^p}{|x-y|^{N+s\,p}}\,dx\,dy+\frac{2}{p}\,\iint_{\Omega\times (\mathbb{R}^N\setminus\Omega')}\frac{|v(x)-g(y)|^p}{|x-y|^{N+s\,p}}\,dx\,dy-\int_\Omega f\,v\,dx.
\]
Existence of a minimizer can be easily inferred by using the Direct Methods in the Calculus of Variations, see for example \cite{Gi}.
\end{oss}

\section{Basic regularity estimates}\label{sec:basic}

\subsection{Known results}
In this section we list some known regularity results for weak solutions. We start with the following important result, contained in \cite[Theorem 1.2]{DKP}. The proof in \cite{DKP} is performed under the stronger assumption that the boundary datum $g$ is in $W^{s,p}(\mathbb{R}^N)$. However, a closer inspection of the proof reveals that this is not needed, see also \cite[Remarks 1.1 \& 2.1]{KMS}. 
\par
We use the standard notation
\[
\mathrm{osc}_E \psi=\sup_E \psi-\inf_E \psi.
\]
\begin{teo}[H\"older continuity for $(s,p)-$harmonic functions]
\label{teo:DKP}
Let $1<p<\infty$, $0<s<1$ and $E\Subset E'\subset \mathbb{R}^N$ be open and bounded sets. Let $v\in X^{s,p}_g(E,E')$ be the solution to
\[
\left\{\begin{array}{rcll}
(-\Delta_p)^s\,v&=&0,&\mbox{ in }E,\\
v&=&g,& \mbox{ in }\mathbb{R}^N\setminus E,
\end{array}
\right.
\]
for some $g\in W^{s,p}(E')\cap L^{p-1}_{s\,p}(\mathbb{R}^N)$. Then there exists $\alpha=\alpha(N,s,p)>0$ such that for every $B_{2R}(x_0)\Subset E$ we have
\begin{equation}
\label{oscillation0}
\mathrm{osc}_{B_r(x_0)} v\le C\,\left(\frac{r}{R}\right)^\alpha\,\left[\left(\fint_{B_{2R}(x_0)} |v|^p\,dx\right)^\frac{1}{p}+\mathrm{Tail}_{p-1,s\,p}\left(v;x_0,\frac{R}{2}\right)\right],
\end{equation}
for every $0<r\le R$. 
In particular, 
\begin{equation}
\label{oscillation}
\int_{B_r(x_0)} \left|v-\overline v_{x_0,r}\right|^p\,dx\le C\,r^N\,\left(\frac{r}{R}\right)^{\alpha\,p}\,\left[\fint_{B_{2R}(x_0)} |v|^p\,dx+\mathrm{Tail}_{p-1,s\,p}\left(v;x_0,\frac{R}{2}\right)^p\right].
\end{equation}
Here $C=C(N,s,p)>0$.
\end{teo}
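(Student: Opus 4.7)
The plan is to reproduce, at the level of a sketch, the De~Giorgi-type scheme developed by Di~Castro--Kuusi--Palatucci. The heart of the matter is an \emph{oscillation reduction}: one proves that there exist universal constants $\lambda\in(0,1)$ and $\sigma\in(0,1)$ (depending only on $N,s,p$) such that
\[
\mathrm{osc}_{B_{\sigma R}(x_0)} v \;\le\; \lambda\,\mathrm{osc}_{B_R(x_0)} v \;+\; C\,\mathrm{Tail}_{p-1,sp}(v;x_0,R/2).
\]
Once this is in hand, a standard dyadic iteration on radii $r_j=\sigma^j R$ yields \eqref{oscillation0} with the H\"older exponent $\alpha:=\log(1/\lambda)/\log(1/\sigma)>0$; the tail on the shrinking ball is controlled by the tail on $B_{R/2}(x_0)$ plus an $L^p$ norm via Lemma~\ref{lm:tails}, and absorbing this with a geometric-series argument gives the stated right-hand side. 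The estimate \eqref{oscillation} is then an immediate consequence of \eqref{oscillation0}, since any value in the range of $v$ on $B_r(x_0)$, in particular $\overline v_{x_0,r}$, differs from $v(x)$ for $x\in B_r(x_0)$ by at most $\mathrm{osc}_{B_r(x_0)} v$.

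The two analytic ingredients needed are, first, a \emph{nonlocal Caccioppoli inequality} for the truncated functions $w_\pm:=(v-k)_\pm$: testing \eqref{equation} (with $f=0$) against $\varphi=w_\pm\,\eta^p$, where $\eta\in C_0^\infty(B_{3R/4}(x_0))$ is a standard cut-off, and using the pointwise algebraic inequalities for $J_p$ (of the type proved in the appendix), one obtains
\[
[w_\pm\,\eta]_{W^{s,p}(\mathbb{R}^N)}^p \;\lesssim\; C(R,\eta)\int_{\mathrm{supp}\,\eta} w_\pm^p\,dx \;+\; \Big(\sup_{x\in \mathrm{supp}\,\eta}\int_{\mathbb{R}^N\setminus B_R(x_0)} \tfrac{w_\pm(y)^{p-1}}{|x-y|^{N+sp}}\,dy\Big)\int w_\pm\,\eta^p\,dx,
\]
in which the non-local term is controlled by the tail via Lemma~\ref{lm:allarga}. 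Second, one needs the \emph{logarithmic lemma}, obtained by choosing the test function $\varphi=\eta^p\,(v-k+d)^{1-p}$ with $d>0$, which gives a BMO-type control on $\log((v-k+d)/d)$ and is precisely the tool that propagates pointwise information across scales in the presence of the nonlocal tail.

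Combining Caccioppoli with the fractional Sobolev--Poincar\'e embedding (Proposition~\ref{prop:sobpoin1}) and iterating on levels $k_j=M(1-2^{-j})$ produces a De~Giorgi lemma: if $|\{v\ge M\}\cap B_{2R}|$ is a sufficiently small fraction of $|B_{2R}|$, then $v\le M/2$ on $B_R$, up to an error controlled by the tail. Translating this into a measure-theoretic decay on good and bad level sets, together with the logarithmic lemma applied to a dichotomy on $\{v>\mathrm{med}\}$ vs.\ $\{v<\mathrm{med}\}$, yields the oscillation reduction. The main obstacle throughout is the careful bookkeeping of tail terms: every quantity localised on $B_R(x_0)$ feels the values of $v$ on all of $\mathbb{R}^N$, and these contributions must be reabsorbed into the inductive scheme without destroying the geometric convergence; this is exactly what the tail functional $\mathrm{Tail}_{p-1,sp}$ was designed to codify, and its homogeneity under the rescaling $v\mapsto v/\mathrm{osc}$ is what allows the iteration to close.
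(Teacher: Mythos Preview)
Your sketch of the De~Giorgi iteration behind \eqref{oscillation0} is a faithful outline of the argument in Di~Castro--Kuusi--Palatucci, and the derivation of \eqref{oscillation} from \eqref{oscillation0} is exactly right. However, the paper does \emph{not} reproduce that argument: it simply cites \cite[Theorem~1.2]{DKP} for \eqref{oscillation0} and then carries out only the short computation
\[
\int_{B_r(x_0)} |v-\overline v_{x_0,r}|^p\,dx \;\le\; \omega_N\,r^N\,\big(\mathrm{osc}_{B_r(x_0)} v\big)^p,
\]
which is the same step you describe in your last sentence of the first paragraph. So your proposal is correct but goes well beyond what the paper actually proves; the paper treats \eqref{oscillation0} as a black box from the literature, whereas you are sketching its proof. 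What your approach buys is self-containment; what the paper's approach buys is brevity, since the DKP machinery (Caccioppoli, log-lemma, measure-to-pointwise, tail bookkeeping) is lengthy and not the focus here.
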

\begin{proof}
The estimate \eqref{oscillation0} is proved in \cite[Theorem 1.2]{DKP}. Here we just show how to get the excess decay estimate \eqref{oscillation} from \eqref{oscillation0}. We have
\[
\begin{split}
\int_{B_r(x_0)} \left|v-\overline v_{x_0,r}\right|^p&=\int_{B_r(x_0)} \left|\fint_{B_r(x_0)} \Big(v(x)-v(y)\Big)\,dy\right|^p\,dx\\
&\le \int_{B_r(x_0)} \fint_{B_r(x_0)} |v(x)-v(y)|^p\,dy\,dx\\
&\le \int_{B_r(x_0)} \left(\mathrm{osc}_{B_r(x_0)} v\right)^p\,dx=\omega_N\, r^N\,\left(\mathrm{osc}_{B_r(x_0)} v\right)^p.
\end{split}
\]
By using \eqref{oscillation0}, we get the desired conclusion.
\end{proof}
A local $L^\infty$-bound for solutions with a right-hand side will be needed. The result below is \cite[Theorem 3.8]{BP}. Just like for the previous result, the stronger assumption $g\in W^{s,p}(\mathbb{R}^N)$ stated in \cite{BP} is not needed.
\begin{teo}[Local boundedness]
\label{teo:loc_bound}
Let $1<p<\infty$ and $0<s<1$. Let $E\Subset E'\subset \mathbb{R}^N$ be open and bounded sets. For $f\in L^q(E)$ with 
\[
\left\{\begin{array}{lr}
q>\dfrac{N}{s\, p},& \mbox{ if } s\,p\le N,\\
&\\
q\ge 1,& \mbox{ if }s\,p>N,
\end{array}
\right.
\]
take $u\in X^{s,p}_g(E,E')$ to be the weak solution to
\[
\left\{\begin{array}{rcll}
(-\Delta_p)^s\,u&=&f,&\mbox{ in }E,\\
u&=&g,& \mbox{ in }\mathbb{R}^N\setminus E,
\end{array}
\right.
\]
for some $g\in W^{s,p}(E')\cap L^{p-1}_{s\,p}(\mathbb{R}^N)$. For every $R>0$ such that
$B_{R}(x_0)\Subset E$ and every $0<\sigma<1$, the following scaling invariant estimate holds
\[
\begin{split}
\|u\|_{L^\infty(B_{\sigma R}(x_0))}&\le C\,\left[\left(\fint_{B_{R}(x_0)} |u|^p\, dx\right)^\frac{1}{p}+\mathrm{Tail}_{p-1,s\,p}(u;x_0,\sigma R)+\left(R^{s\,p-\frac{N}{q}}\,\|f\|_{L^q(B_{R}(x_0))}\right)^\frac{1}{p-1}\right],
\end{split}
\]
where $C=C(N,s,p,q,\sigma)>0$.
\end{teo}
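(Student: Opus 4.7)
\textbf{Proof plan for Theorem \ref{teo:loc_bound}.} The strategy is a De Giorgi-type level-set iteration, adapted to the nonlocal setting to keep track of a tail contribution. By splitting $u=u_+-u_-$ and applying the argument to $u_+$ and then to $(-u)_+$, it suffices to bound $\|u_+\|_{L^\infty(B_{\sigma R})}$. After translating so that $x_0=0$ and rescaling to $R=1$ (the estimate being scale invariant by construction), we fix levels $k_j=k(1-2^{-j})$ and radii $r_j=\sigma+(1-\sigma)2^{-j}$, and consider the truncations $w_j=(u-k_j)_+$, defining the iteration quantities
\[
A_j:=\int_{B_{r_j}}w_j^p\,dx.
\]
The plan is to derive a nonlinear recursion $A_{j+1}\le C\,b^{j}\,A_j^{1+\nu}$ with $b>1$ and $\nu>0$, which forces $A_j\to 0$ provided the starting level $k$ is large enough in terms of the right-hand side of the theorem; the conclusion $u_+\le k$ on $B_\sigma$ then follows.

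The central ingredient is a Caccioppoli-type inequality. First, I would take $\varphi=w_{j+1}\,\eta^p$ as test function in \eqref{equation}, where $\eta\in C_0^\infty(B_{r_j})$ is a cutoff with $\eta\equiv 1$ on $B_{r_{j+1}}$ and $|\nabla\eta|\lesssim 2^{j}$. Using the pointwise algebraic estimates in the Appendix (for the quantity $J_p(a-b)(a\eta_a^p-b\eta_b^p)$, in the spirit of \cite[Lemma 2]{DKP}), one splits the double integral into a ``local'' piece controlling $[w_{j+1}\eta]_{W^{s,p}(\mathbb{R}^N)}^p$ from below and two error terms: one of lower order in $w_{j+1}$ that is absorbed via Young's inequality, and a genuinely nonlocal term that, thanks to Lemma~\ref{lm:allarga}, is bounded by $\mathrm{Tail}_{p-1,sp}(u_+;0,\sigma)^{p-1}$ times $\int w_{j+1}\eta^p$. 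For the right-hand side, H\"older's inequality gives
\[
\int_{B_{r_j}} f\,w_{j+1}\,\eta^p\,dx\le \|f\|_{L^q(B_1)}\,\bigl\|w_{j+1}\,\eta^p\bigr\|_{L^{q'}(B_{r_j})},
\]
which under the hypothesis $q>N/(sp)$ (respectively $q\ge 1$ when $sp>N$) is compatible with the Sobolev embedding $\mathcal{B}^{s,p}_\infty\hookrightarrow L^{p^*_s}$ via Proposition~\ref{prop:sobpoin1}.

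Combining the Caccioppoli inequality with Proposition~\ref{prop:sobpoin1} applied to the compactly supported function $w_{j+1}\eta$, and using Chebyshev to replace the measure of the superlevel set $\{w_j>k_{j+1}-k_j\}\cap B_{r_j}$ by $(k/2^{j+1})^{-p}\,A_j$, yields a recursion of the form
\[
A_{j+1}\le C\, 2^{j\gamma}\,k^{-p\nu}\,\Bigl[1+\mathrm{Tail}_{p-1,sp}(u_+;0,\sigma)^{p-1}+\|f\|_{L^q(B_1)}^{\tfrac{p}{p-1}}\Bigr]\,A_j^{1+\nu},
\]
for some explicit $\gamma,\nu>0$ depending only on $N,s,p,q$. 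A standard iteration lemma (as in \cite[Lemma 7.1]{Gi}) then yields $A_j\to 0$ as soon as $A_0=\int_{B_1}u_+^p\,dx$ is small relative to a threshold depending on the bracket above; equivalently, taking $k$ to be a suitable universal multiple of the quantity on the right-hand side of the theorem, we obtain the stated bound. The tail term, after a final application of Lemma~\ref{lm:tails}, can be rewritten in terms of $\mathrm{Tail}_{p-1,sp}(u;0,\sigma R)$ as required.

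The main obstacle is bookkeeping: one must simultaneously handle (i) the nonlocal tail term, which couples the estimate on $B_{r_j}$ with the behavior of $u$ outside $B_1$; (ii) the source term $f$, whose scaling $R^{sp-N/q}$ has to match the other two terms; and (iii) the geometric factor $2^{j\gamma}$ that must not spoil the nonlinear recursion. The condition $q>N/(sp)$ (or $q\ge 1$ when $sp>N$) is precisely what gives the positive gain $\nu>0$ needed to close the iteration.
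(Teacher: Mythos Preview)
The paper does not give its own proof of this theorem: it is stated in Section~\ref{sec:basic} under ``Known results'' and simply cited from \cite[Theorem~3.8]{BP}. Your De Giorgi level-set iteration is exactly the method used in that reference, so your plan is correct and matches what underlies the cited result; there is nothing to compare against in the present paper itself.
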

\begin{oss}
\label{oss:stupid}
From the previous result, we get in particular that if $f\in L^q_{\rm loc}(\Omega)$ with
\[
\left\{\begin{array}{lr}
q>\dfrac{N}{s\,p},& \mbox{ if } s\,p\le N,\\
&\\
q\ge 1,& \mbox{ if }s\,p>N,
\end{array}
\right.
\]
and $u\in W^{s,p}_{\rm loc}(\Omega)\cap L^{p-1}_{s\,p}(\mathbb{R}^N)$ is a local weak solution of $(-\Delta_p)^s u=f$ in $\Omega$, then $u\in L^\infty_{\rm loc}(\Omega)$. Indeed, for every $B_{2R}(x_0)\Subset \Omega$, we have that $u$ is the weak solution to
\[
\left\{\begin{array}{rcll}
(-\Delta_p)^s\,u&=&f,&\mbox{ in }B_{R}(x_0),\\
u&=&u,& \mbox{ in }\mathbb{R}^N\setminus B_{R}(x_0),
\end{array}
\right.
\]
and the boundary datum (which is $u$ itself) lies in $W^{s,p}(B_{2R}(x_0))\cap L^{p-1}_{s\,p}(\mathbb{R}^N)$. Then Theorem~\ref{teo:loc_bound} implies $u\in L^\infty(B_{\sigma R}(x_0))$, for every $0<\sigma<1$.
\end{oss}

\subsection{H\"older regularity for non-homogeneous equations}
Here we prove that local weak solutions of
\[
(-\Delta_p)^s u=f,
\]
are in $C^{0,\beta}$ for some $0<\beta<s$, provided that $f$ is integrable enough. We establish the result by transferring the excess decay estimate of Theorem~\ref{teo:DKP} from an $(s,p)-$harmonic function to the solution $u$.
\begin{lm}
\label{lm:1}
Let $p\ge 2$, $0<s<1$ and $\Omega\subset\mathbb{R}^N$ be an open and bounded set. For $f\in L^q_{\rm loc}(\Omega)$, with
\[
q\ge (p^*_s)'\quad \mbox{ if } s\,p\not =N\qquad \mbox{ or }\qquad q>1\quad \mbox{ if }s\,p=N,
\]
we consider a local weak solution $u\in W^{s,p}_{\rm loc}(\Omega)\cap L^{p-1}_{s\,p}(\mathbb{R}^N)$ of the equation
\[
(-\Delta_p)^s u=f,\qquad \mbox{ in }\Omega.
\]
We take a pair of concentric balls $B\Subset B'\Subset\Omega $, and define $v\in X^{s,p}_u(B,B')$ the unique weak solution of 
\[
\left\{\begin{array}{rcll}
(-\Delta_p)^s\,v&=&0,&\mbox{ in }B,\\
v&=&u,& \mbox{ in }\mathbb{R}^N\setminus B.
\end{array}
\right.
\]
Then for $s\,p\not =N$ we have
\begin{equation}
\label{prima}
[u-v]^p_{W^{s,p}(\mathbb{R}^N)}\le C\,|B|^{\frac{p'}{q'}-\frac{p}{p-1}\,\frac{N-s\,p}{N\,p}}\,\left(\int_B |f|^{q}\,dx\right)^\frac{p'}{q},
\end{equation}
and also
\begin{equation}
\label{seconda}
\fint_{B} |u-v|^p\,dx\le  C\,|B|^{\frac{p'}{q'}-\frac{p}{p-1}\,\frac{N-s\,p}{N\,p}+\frac{s\,p}{N}-1}\,\left(\int_B |f|^{q}\,dx\right)^\frac{p'}{q},
\end{equation}
for a constant $C=C(N,p,s)>0$.
\par
In the case $s\,p=N$, we have the same estimates with $N\,p/(N-s\,p)$ replaced by an arbitrary exponent $m<\infty$ and the constant $C$ depending on $m$ as well.
\end{lm}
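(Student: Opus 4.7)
The natural strategy is a comparison argument: use $\varphi = u - v$ as a test function in the weak formulations of both equations, subtract them, and exploit the monotonicity of $J_p$. Since $v = u$ a.e.\ in $\mathbb{R}^N \setminus B$, the difference $u - v$ lies in $X^{s,p}_0(B,B')$, and by Lemma~\ref{lm:cazzata} extends to $W^{s,p}(\mathbb{R}^N)$ with $u-v \equiv 0$ outside $B$. In particular $u-v$ is compactly supported in $B \Subset \Omega$, hence admissible both in Definition~\ref{defi:localweak} for $u$ and in the weak formulation for $v$.

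Subtracting the two weak formulations and using that $f \equiv 0$ in the equation for $v$ gives
\begin{equation*}
\iint_{\mathbb{R}^N \times \mathbb{R}^N} \frac{\bigl[J_p(u(x)-u(y)) - J_p(v(x)-v(y))\bigr]\,\bigl[(u-v)(x) - (u-v)(y)\bigr]}{|x-y|^{N+s\,p}}\,dx\,dy = \int_B f\,(u-v)\,dx.
\end{equation*}
For $p \ge 2$, the pointwise inequality $(J_p(a)-J_p(b))(a-b) \ge c_p\,|a-b|^p$ (applied with $a = u(x)-u(y)$ and $b = v(x)-v(y)$) bounds the left-hand side from below by $c_p\,[u-v]_{W^{s,p}(\mathbb{R}^N)}^p$. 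Such an inequality is either standard or available in the Appendix~\ref{sec:app} referenced by the authors.

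For the right-hand side, the assumption $q \ge (p^*_s)'$ gives $q' \le p^*_s$, so H\"older's inequality followed by the Sobolev inequality of Proposition~\ref{prop:sobpoin1} applied to the zero-extended $u-v$ yields
\begin{equation*}
\left|\int_B f\,(u-v)\,dx\right| \le \|f\|_{L^q(B)}\,\|u-v\|_{L^{q'}(B)} \le C\,|B|^{\frac{1}{q'}-\frac{1}{p^*_s}}\,\|f\|_{L^q(B)}\,[u-v]_{W^{s,p}(\mathbb{R}^N)}.
\end{equation*}
Combining the two bounds and dividing by $[u-v]_{W^{s,p}(\mathbb{R}^N)}$ gives
\begin{equation*}
[u-v]_{W^{s,p}(\mathbb{R}^N)}^{p-1} \le C\,|B|^{\frac{1}{q'}-\frac{1}{p^*_s}}\,\|f\|_{L^q(B)},
\end{equation*}
and raising to the power $p' = p/(p-1)$, together with $1/p^*_s = (N-sp)/(Np)$, produces exactly \eqref{prima}. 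The estimate \eqref{seconda} then follows by applying the Poincar\'e-type inequality \eqref{poin} to $u-v$ on $B$, which gives $\|u-v\|_{L^p(B)}^p \le C\,|B|^{s\,p/N}\,[u-v]_{W^{s,p}(\mathbb{R}^N)}^p$, dividing by $|B|$, and substituting \eqref{prima}.

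The borderline case $s\,p = N$ is handled in the same way, replacing $p^*_s$ by any finite $m$ in the Sobolev step (via the third inequality of Proposition~\ref{prop:sobpoin1}), which only makes the constant depend on $m$. There is no serious obstacle here; the only care needed is the bookkeeping of the exponents of $|B|$, to verify that $p'\,(\tfrac{1}{q'}-\tfrac{1}{p^*_s})$ and the additional factor $|B|^{s\,p/N - 1}$ coming from averaging reproduce the exponents displayed in \eqref{prima} and \eqref{seconda}.
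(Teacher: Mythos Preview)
Your proof is correct and follows essentially the same route as the paper: subtract the two weak formulations, test with $\varphi=u-v\in X^{s,p}_0(B,B')$, use the monotonicity inequality \eqref{xxx} to bound the left-hand side below by $[u-v]_{W^{s,p}(\mathbb{R}^N)}^p$, then H\"older and the Sobolev inequality of Proposition~\ref{prop:sobpoin1} for the right-hand side, and finally Poincar\'e for \eqref{seconda}. The paper's argument is identical, including the handling of the borderline case $s\,p=N$.
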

\begin{proof}
We first observe that $v$ exists thanks to Proposition \ref{prop:death!}, since $u\in W^{s,p}(B')\cap L^{p-1}_{s\,p}(\mathbb{R}^N)$.
Subtracting the weak formulations of the equations solved by $u$ and $v$, we get
\[
\iint_{\mathbb{R}^N\times\mathbb{R}^N} \frac{\Big(J_p(u(x)-u(y))-J_p(v(x)-v(y))\Big)\,\big(\varphi(x)-\varphi(y)\big)}{|x-y|^{N+s\,p}}\,dx\,dy=\int f\,\varphi\,dx,
\]
for every $\varphi\in X^{s,p}_0(B,B')$. With $\varphi=u-v$ (which is admissible by definition of $X^{s,p}_u(B,B')$), we use inequality \eqref{xxx} to get\footnote{As usual, for simplicity we only treat the case $s\,p<N$. The case $s\,p\ge N$ can be handled similarly.}
\[
\begin{split}
[u-v]^p_{W^{s,p}(\mathbb{R}^N)}\le C\,\int_{B} f\,(u-v)\,dx&\le C\,\left(\int_{B} |f|^{q}\,dx\right)^\frac{1}{q}\,\left(\int_B |u-v|^{q'}\,dx\right)^\frac{1}{q'}\\
&\le C\,|B|^{\frac{1}{q'}-\frac{1}{p^*_s}}\,\left(\int_{B} |f|^{q}\,dx\right)^\frac{1}{q}\,\|u-v\|_{L^{p^*_s}(B)}\\
&\le C\,|B|^{\frac{1}{q'}-\frac{1}{p^*_s}}\,\left(\int_{B} |f|^q\,dx\right)^\frac{1}{q}\,[u-v]_{W^{s,p}(\mathbb{R}^N)},
\end{split}
\]
where we used the Sobolev inequality for the space $X^{s,p}_0(B,B')$, see Proposition \ref{prop:sobpoin1} and Lemma \ref{lm:cazzata}. By simplifying a factor $[u-v]_{W^{s,p}(\mathbb{R}^N)}$, we obtain
\[
[u-v]^{p-1}_{W^{s,p}(\mathbb{R}^N)}\le C\,|B|^{\frac{1}{q'}-\frac{1}{p^*_s}}\,\left(\int_{B} |f|^{q}\,dx\right)^\frac{1}{q},
\] 
which in turn gives \eqref{prima}. 
\par
Estimate \eqref{seconda} now follows by applying Poincar\'e's inequality in \eqref{prima}, see again Proposition \ref{prop:sobpoin1}.
\end{proof}
\begin{lm}[Decay transfer]
\label{lm:transfer}
Let $p\ge 2$, $0<s<1$ and $\Omega\subset\mathbb{R}^N$ be an open and bounded set. For $f\in L^{q}_{\rm loc}(\Omega)$
with
\[
q\ge (p^*_s)'\quad \mbox{ if } s\,p\not =N\qquad \mbox{ or }\qquad q>1\quad \mbox{ if }s\,p=N,
\]
we consider a local weak solution $u\in W^{s,p}_{\rm loc}(\Omega)\cap L^{p-1}_{s\,p}(\mathbb{R}^N)$ of the equation
\[
(-\Delta_p)^s u=f,\qquad \mbox{ in }\Omega.
\]
If $B_{4R}(x_0)\Subset\Omega$, then we have the excess decay estimate
\[
\begin{split}
\fint_{B_r(x_0)} |u-\overline u_{x_0,r}|^p\,dx&\le C\,\left(\frac{R}{r}\right)^N\,R^\gamma\,\|f\|_{L^{q}(B_{4R(x_0)})}^{p'}\\
&+C\,\left(\frac{r}{R}\right)^{\alpha\,p}\,\left[R^\gamma\,\|f\|_{L^{q}(B_{4R}(x_0))}^{p'}+\fint_{B_{4R}(x_0)} |u|^p\,dx+\mathrm{Tail}_{p-1,sp}\left(u;x_0,4R\right)^p\right],
\end{split}
\]
for every $0<r\le R$. Here
\begin{equation}
\label{gamma}
\gamma:=\left\{\begin{array}{cc}
s\,p\,p'+N\,\left(\dfrac{p'}{q'}-\dfrac{1}{p-1}-1\right),& \mbox{ if }s\,p\not=N,\\
&\\
N\,p'\,\left(\dfrac{1}{q'}-\dfrac{1}{m}\right),& \mbox{ for an arbitrary } q'< m<\infty, \mbox{ if } s\,p=N. 
\end{array}
\right.
\end{equation}
and $C=C(N,s,p,q,m)>0$.
The exponent $\alpha$ is the same as in Theorem~\ref{teo:DKP}.
\end{lm}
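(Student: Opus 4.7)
The plan is to compare $u$ with its $(s,p)-$harmonic replacement on $B_{2R}(x_0)$ and then combine the excess decay for harmonic functions from Theorem~\ref{teo:DKP} with the comparison estimate of Lemma~\ref{lm:1}. Let $v\in X^{s,p}_u(B_{2R}(x_0),B_{4R}(x_0))$ denote the unique weak solution of $(-\Delta_p)^s v = 0$ in $B_{2R}(x_0)$ with $v=u$ a.e.\ in $\mathbb{R}^N\setminus B_{2R}(x_0)$; existence is granted by Proposition~\ref{prop:death!} since $u\in W^{s,p}(B_{4R}(x_0))\cap L^{p-1}_{sp}(\mathbb{R}^N)$. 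For $0<r\le R$, by the triangle inequality together with the minimizing property of the average in the constant,
\[
\fint_{B_r(x_0)} |u - \overline{u}_{x_0,r}|^p\,dx \le C \fint_{B_r(x_0)} |u - v|^p\,dx + C \fint_{B_r(x_0)} |v - \overline{v}_{x_0,r}|^p\,dx.
\]

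For the comparison term, I apply Lemma~\ref{lm:1} on $B=B_{2R}(x_0)$. A direct exponent check shows that the power of $|B_{2R}|$ in that estimate, when written as a power of $R$, equals exactly $\gamma/N$ with $\gamma$ as in \eqref{gamma}: indeed
\[
N\left(\tfrac{p'}{q'}-\tfrac{p}{p-1}\tfrac{N-sp}{Np}+\tfrac{sp}{N}-1\right)=sp\cdot p'+N\left(\tfrac{p'}{q'}-\tfrac{1}{p-1}-1\right).
\]
Enlarging the average from $B_{2R}$ to $B_r\subset B_{2R}$ contributes a factor $(R/r)^N$ and produces the first summand $(R/r)^N R^\gamma\,\|f\|_{L^q(B_{4R})}^{p'}$ of the conclusion. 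For the harmonic term, Theorem~\ref{teo:DKP} applied to $v$ on $E=B_{2R}(x_0)$ (interpreted via a limiting argument with inner radius $R'\nearrow R$ to accommodate the strict compact inclusion) gives
\[
\fint_{B_r(x_0)} |v - \overline{v}_{x_0,r}|^p\,dx \le C\left(\frac{r}{R}\right)^{\alpha p}\left[\fint_{B_{2R}(x_0)} |v|^p\,dx + \mathrm{Tail}_{p-1,sp}(v;x_0,R/2)^p\right].
\]

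It remains to transfer the $v-$quantities to $u$. Since $v\equiv u$ outside $B_{2R}(x_0)$, the tail of $v$ at scale $4R$ coincides with that of $u$; applying the sharpened form \eqref{paura} of Lemma~\ref{lm:tails} with $m=p$, $q=p-1$, $\alpha=sp$, at $x_0=x_1$, inner radius $R/2$, outer radius $4R$, and then raising to the power $p'=p/(p-1)$, one obtains
\[
\mathrm{Tail}_{p-1,sp}(v;x_0,R/2)^p \le C\,\mathrm{Tail}_{p-1,sp}(u;x_0,4R)^p + C\fint_{B_{4R}(x_0)} |v|^p\,dx.
\]
Splitting $\fint_{B_{4R}} |v|^p\le C\fint_{B_{4R}}|u|^p+C\fint_{B_{4R}}|u-v|^p$ and observing that the last term is supported in $B_{2R}(x_0)$ and hence dominated by $R^\gamma\,\|f\|_{L^q(B_{4R})}^{p'}$ via the comparison step, and analogously bounding $\fint_{B_{2R}}|v|^p$, assembles all four pieces into the stated decay estimate.

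The main obstacle is the tail transfer: the choice $m=p$ in Lemma~\ref{lm:tails} is crucial so that the $r^{-(p-1)N/p}$ pre-factor becomes exactly $R^{-N}$ after being raised to the $p$-th power, producing the average $\fint_{B_{4R}}|v|^p$ rather than a quantity with worse $R-$dependence. The exponent arithmetic producing $\gamma$ is elementary. The borderline case $sp=N$ is handled identically by invoking the $L^m$ Sobolev-Poincar\'e variant in Lemma~\ref{lm:1} for arbitrary $q'<m<\infty$, which produces the alternative form of $\gamma$ recorded in \eqref{gamma}.
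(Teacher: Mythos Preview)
Your proof is correct and follows essentially the same route as the paper. The only cosmetic difference is that the paper takes the harmonic replacement on $B_{3R}(x_0)$ (so that $B_{2R}(x_0)\Subset E$ holds directly in Theorem~\ref{teo:DKP}, avoiding your limiting argument $R'\nearrow R$), and handles the tail transfer with the basic form of Lemma~\ref{lm:tails} followed by H\"older rather than invoking \eqref{paura} with $m=p$.
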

\begin{proof}
As always, for simplicity we work with the case $s\,p<N$.
Take a ball $B_{4R}(x_0)\Subset\Omega$, and define $v\in X^{s,p}_u(B_{3R}(x_0),B_{4R}(x_0))$ the unique solution to the Dirichlet boundary value problem
\[
\left\{\begin{array}{rcll}
(-\Delta_p)^s\,v&=&0,&\mbox{ in }B_{3R}(x_0),\\
v&=&u,& \mbox{ in }\mathbb{R}^N\setminus B_{3R}(x_0).
\end{array}
\right.
\]
We start by observing that
\[
|\overline u_{x_0,r}-\overline v_{x_0,r}|^p=\left|\fint_{B_r(x_0)} [u-v]\,dx\right|^p\le \fint_{B_r(x_0)} |u-v|^p\,dx,
\]
thus 
\[
\begin{split}
\fint_{B_r(x_0)} |u-\overline u_{x_0,r}|^p\,dx&\le C\, \fint_{B_r(x_0)} |u-v|^p\,dx+C\,\fint_{B_r(x_0)} |\overline u_{x_0,r}-\overline v_{x_0,r}|^p\,dx\\
&+C\,\fint_{B_r(x_0)} |v-\overline v_{x_0,r}|^p\,dx\\
&\le 2\,C\, \fint_{B_r(x_0)} |u-v|^p\,dx+C\,\fint_{B_r(x_0)} |v-\overline v_{x_0,r}|^p\,dx,
\end{split}
\]
where $C=C(p)>0$. By using Lemma \ref{lm:1} and Theorem~\ref{teo:DKP} for $v$ with $E=B_{3R}(x_0)$ and $E'=B_{4R}(x_0)$, we obtain for $0<r\le R$
\[
\begin{split}
\fint_{B_r(x_0)} |u-\overline u_{x_0,r}|^p\,dx&\le C\, \left(\frac{R}{r}\right)^N\,R^{s\,p+N\,\left(\frac{p'}{q'}-\frac{p'}{p^*_s}-1\right)}\,\left(\int_{B_{3R}(x_0)} |f|^{q}\,dx\right)^\frac{p'}{q}\\
&+C\,\left(\frac{r}{R}\right)^{\alpha\,p}\,\left[\fint_{B_{2R}(x_0)} |v|^p\,dx+\mathrm{Tail}_{p-1,sp}\left(v;x_0,\frac{R}{2}\right)^p\right],
\end{split}
\]
with $C=C(N,s,p)$. For the second term, we use Lemma \ref{lm:tails} with $x_1=x_0$ and deduce
\[
\begin{split}
\mathrm{Tail}_{p-1,s\,p}\left(v;x_0,\frac{R}{2}\right)^p&\le C\left(\mathrm{Tail}_{p-1,sp}(u;x_0,4R)^p+\fint_{B_{4R}(x_0)} |v|^p\,dx\right).
\end{split}
\]
Finally, we use Lemma \ref{lm:1} again and obtain
\[
\begin{split}
\fint_{B_{4R}(x_0)} |v|^p\,dx&\le C\,\fint_{B_{4R}(x_0)} |v-u|^p\,dx+C\,\fint_{B_{4R}(x_0)} |u|^p\,dx\\
&\le C\,R^{s\,p+N\,\left(\frac{p'}{q'}-\frac{p'}{p^*_s}-1\right)}\,\left(\int_{B_{4R}(x_0)} |f|^{q}\,dx\right)^\frac{p'}{q}+C\,\fint_{B_{4R}(x_0)} |u|^p\,dx.
\end{split}
\]
Here $C=C(N,s,p)$. This concludes the proof.
\end{proof}
We can now obtain H\"older regularity of solutions to the non-homogeneous equation.
\begin{teo}
\label{teo:holderf}
Let $p\ge 2$ and $0<s<1$. For $f\in L^q_{\rm loc}(\Omega)$ with 
\[
\left\{\begin{array}{lr}
q>\max\left\{\dfrac{N}{s\,p},\,1\right\},& \mbox{ if } s\,p\le N,\\
q\ge 1,& \mbox{ if }s\,p>N,
\end{array}
\right.
\]
we consider a local weak solution $u\in W^{s,p}_{\rm loc}(\Omega)\cap L^{p-1}_{s\,p}(\mathbb{R}^N)$ of the equation
\[
(-\Delta_p)^s u=f,\qquad \mbox{ in }\Omega.
\]
We set
\[
\beta=\frac{\gamma}{N+\alpha\,p+\gamma}\,\alpha,
\] 
where $\gamma$ is as in \eqref{gamma} and $\alpha$ as in Theorem \ref{teo:DKP}. Then $u\in C^{0,\beta}_{\rm loc}(\Omega)$. 
\par
More precisely, for every ball $B_{R_0}(z)\Subset\Omega$ we have the estimate
\[
\begin{split}
[u]_{C^{0,\beta}(B_{R_0}(z))}^p%&\leq C\sup_{x_0\in B_{R_0}(z),\, r>0} r^{-N-\beta\,p}\,\int_{B_r(x_0)\cap B_{R_0}(z)} |u-\overline u_{x_0,r}|^p\,dx\\
&\le C\,\left[\|f\|_{L^{q}(B_{R_1}(z))}^{p'}+\|u\|^p_{L^\infty(B_{R_1}(z))}+\mathrm{Tail}_{p-1,sp}\left(u;z,R_1\right)^{p}+1\right],
\end{split}
\]
where 
\[
R_1=R_0+\frac{\mathrm{dist}(B_{R_0}(z),\partial\Omega)}{2}.
\] 
The constant $C$ depends only on $N,p,s,q,R_0$ and $\mathrm{dist}(B_{R_0}(z),\partial\Omega)$.
\end{teo}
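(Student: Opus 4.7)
\textit{Proof plan.} The plan is to upgrade the excess-decay estimate of Lemma~\ref{lm:transfer} to a Campanato-type bound of the form $\fint_{B_r(x_0)}|u-\overline{u}_{x_0,r}|^p\,dx \le C\,r^{p\beta}$ and then invoke the classical isomorphism $\mathcal{L}^{p,N+p\beta}\cong C^{0,\beta}$ (see \cite[Theorem~2.9]{Gi}) to conclude. The Hölder seminorm estimate in the statement will then follow from a direct tracking of the Campanato constant.

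Fix $B_{R_0}(z)\Subset\Omega$ and set $R_*:=\tfrac{1}{8}\,\mathrm{dist}(B_{R_0}(z),\partial\Omega)$, so that for every $x_0\in B_{R_0}(z)$ and every $R\in(0,R_*]$ the ball $B_{4R}(x_0)$ lies inside $B_{R_1}(z)\Subset\Omega$. Using Lemma~\ref{lm:tails} with $m=\infty$, I first control the ``external'' quantities appearing on the right-hand side of Lemma~\ref{lm:transfer}, replacing $\fint_{B_{4R}(x_0)}|u|^p\,dx$ by $\|u\|_{L^\infty(B_{R_1}(z))}^p$ and $\mathrm{Tail}_{p-1,sp}(u;x_0,4R)^p$ by $C\,\bigl(\|u\|_{L^\infty(B_{R_1}(z))}^p+\mathrm{Tail}_{p-1,sp}(u;z,R_1)^p\bigr)$, with constants that do not depend on $R$. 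Similarly $\|f\|_{L^q(B_{4R}(x_0))}\le \|f\|_{L^q(B_{R_1}(z))}$.

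For each $r\in(0,R_*]$ I then apply Lemma~\ref{lm:transfer} with the coupled choice
\[
R \;:=\; r^{\mu}\,R_*^{\,1-\mu},\qquad \mu\;:=\;\frac{N+p\beta}{N+\gamma}\;=\;\frac{\alpha-\beta}{\alpha},
\]
which satisfies $r\le R\le R_*$. The exponent $\mu$ is chosen precisely so that the two matching conditions $\mu(N+\gamma)-N=p\beta$ and $\alpha p(1-\mu)=p\beta$ hold simultaneously; eliminating $\mu$ between them reproduces the stated formula $\beta=\alpha\gamma/(N+\alpha p+\gamma)$. A short computation shows that, after substitution, each of the three factors $(R/r)^N R^\gamma$, $(r/R)^{\alpha p}R^\gamma$, and $(r/R)^{\alpha p}$ appearing in the decay estimate collapses to $r^{p\beta}$ times a bounded power of $R_*$ (explicitly, $R_*^{\gamma-p\beta}$ and $R_*^{-p\beta}$, both finite since $\gamma-p\beta=\gamma(N+\gamma)/(N+\alpha p+\gamma)>0$). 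Combined with the preceding substitutions, this produces
\[
\fint_{B_r(x_0)}|u-\overline{u}_{x_0,r}|^p\,dx\;\le\; C\,r^{p\beta}\Bigl(\|f\|_{L^{q}(B_{R_1}(z))}^{p'}+\|u\|_{L^\infty(B_{R_1}(z))}^p+\mathrm{Tail}_{p-1,sp}(u;z,R_1)^p+1\Bigr),
\]
uniformly in $x_0\in B_{R_0}(z)$ and $r\in(0,R_*]$, where the trailing ``$+1$'' absorbs the purely geometric constants depending on $R_0$ and $\mathrm{dist}(B_{R_0}(z),\partial\Omega)$. For $r>R_*$ the same bound holds trivially from $|u-\overline{u}_{x_0,r}|\le 2\|u\|_{L^\infty}$, after enlarging the constant. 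Hence $u\in\mathcal{L}^{p,N+p\beta}(B_{R_0}(z))$ with controlled seminorm, and Campanato's theorem gives the claimed $C^{0,\beta}$ estimate.

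The main obstacle is identifying the correct coupling $R=R(r)$: because the transfer estimate carries three independent powers of $r$, one must solve two matching conditions simultaneously, and it is exactly this system that forces the explicit value $\beta=\alpha\gamma/(N+\alpha p+\gamma)$. A secondary subtlety is the borderline case $s\,p=N$, where $\gamma$ depends on an auxiliary parameter $m<\infty$ coming from the critical Sobolev embedding; the hypothesis $q>1$ guarantees $q'<\infty$, so any choice $m>q'$ is admissible and yields $\gamma>0$, which in turn gives $\beta>0$ and the argument proceeds verbatim.
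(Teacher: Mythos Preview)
Your proof is correct and follows essentially the same approach as the paper: both apply Lemma~\ref{lm:transfer} with a coupled choice of radii and conclude via Campanato's theorem. Your parametrization $R=r^\mu R_*^{1-\mu}$ is the inverse of the paper's choice $r=R^\theta$ with $\theta=1+\gamma/(N+\alpha p)$ (indeed $\mu=1/\theta$), and your inclusion of the factor $R_*^{1-\mu}$ is a slightly cleaner way to ensure $r\le R\le R_*$ throughout.
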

\begin{proof}
Take a ball $B_{R_0}(z)\Subset\Omega$ and set
\[
\mathrm{d}=\mathrm{dist}(B_{R_0}(z),\partial\Omega)>0\qquad \mbox{ and }\qquad R_1=\frac{\mathrm{d}}{2}+R_0.
\] 
Choose a point $x_0\in B_{R_0}(z)$ and consider the ball $B_{4R}(x_0)$ with $R<\min\{1,\mathrm{d}/8\}$. We observe that $q>(p^*_s)'$. If\footnote{For the conformal case $s\,p=N$, it is sufficient to reproduce the proof, by using Lemma \ref{lm:transfer} with an exponent $m>q'$.} $s\,p\not =N$, we may apply Lemma \ref{lm:transfer} and obtain
\[
\begin{split}
\fint_{B_r(x_0)} |u-\overline u_{x_0,r}|^p\,dx&\le C\,\left(\frac{R}{r}\right)^N\,R^{\gamma}\,\|f\|_{L^{q}(B_{4R(x_0)})}^{p'}\\
&+C\,\left(\frac{r}{R}\right)^{\alpha\,p}\,\left[R^\gamma\,\|f\|_{L^{q}(B_{4R}(x_0))}^{p'}+\fint_{B_{4R}(x_0)} |u|^p\,dx+\mathrm{Tail}_{p-1,s\,p}\left(u;x_0,4R\right)^p\right]\\
&\le C\,\left(\frac{R}{r}\right)^N\,R^{\gamma}\,\|f\|_{L^q(B_{R_1}(z))}^{p'}\\
&+C\,\left(\frac{r}{R}\right)^{\alpha\,p}\,\left[\mathrm{d}^\gamma\,\|f\|_{L^{q}(B_{R_1}(z))}^{p'}+\|u\|^p_{L^\infty(B_{R_1}(z))}+\mathrm{Tail}_{p-1,s\,p}\left(u;x_0,4R\right)^p\right],
\end{split}
\]
for every $0<r\le R<\min\{1,\mathrm{d}/8\}$. Here we used that $u\in L^\infty_{\rm loc}(\Omega)$, thanks to the hypothesis on $f$ and Theorem~\ref{teo:loc_bound}.
For the tail term, we can use \eqref{paura} of Lemma \ref{lm:tails} with $m=\infty$ with $B_{4R}(x_0)\subset B_{R_1}(z)$, then for $R<\min\{1,\delta/8\}$ we have
\[
\begin{split}
\mathrm{Tail}_{p-1,s\,p}\left(u;x_0,4R\right)^{p-1}&\le \left(\frac{4\,R}{R_1}\right)^{s\,p}\,\left(\frac{4\,R}{R_1-|x_0-z|}\right)^{N+s\,p}\,\mathrm{Tail}_{p-1,s\,p}\left(u;z,R_1\right)^{p-1}+C\,\|u\|^{p-1}_{L^\infty(B_{R_1}(z))}\\
&\le \mathrm{Tail}_{p-1,s\,p}\left(u;z,R_1\right)^{p-1}+C\,\|u\|_{L^\infty(B_{R_1}(z))}^{p-1}.
\end{split}
\]
In the second estimate, we used that
\[
\frac{4\,R}{R_1}<\frac{\dfrac{\mathrm{d}}{2}}{R_0+\dfrac{\mathrm{d}}{2}}<1\qquad \mbox{ and }\qquad \frac{4\,R}{R_1-|x_0-z|}\le \frac{4\,R}{R_1-R_0}<1.
\]
In conclusion, 
\[
\begin{split}
\fint_{B_r(x_0)} |u-\overline u_{x_0,r}|^p\,dx &\le C\,\left(\frac{R}{r}\right)^N\,R^{\gamma}\,\|f\|_{L^q(B_{R_1}(z))}^{p'}\\
&+C\,\left(\frac{r}{R}\right)^{\alpha\,p}\,\left[\mathrm{d}^\gamma\,\|f\|_{L^{q}(B_{R_1}(z))}^{p'}+\|u\|^p_{L^\infty(B_{R_1}(z))}+\mathrm{Tail}_{p-1,s\,p}\left(u;z,R_1\right)^{p}\right],
\end{split}
\]
possibly with a different constant $C=C(N,s,p,q)>0$.
Observe that thanks to the hypothesis on $q$, we have 
\[
\gamma:=s\,p+N\,\left(\dfrac{p'}{q'}-\dfrac{p'}{p^*_s}-1\right)>0.
\]
Then we make the choice $r=R^\theta$, with 
\[
\theta=1+\frac{\gamma}{N+\alpha\,p}.
\]
For every $0<r\le \min\{1,(\mathrm{d}/8)^\theta\}$ and every $x_0\in B_{R_0}(z)$, we obtain
\[
r^{-\beta\,p}\,\fint_{B_r(x_0)\cap B_{R_0}(z)} |u-\overline u_{x_0,r}|^p\,dx\le C\,\left[(\mathrm{d}^\gamma+1)\,\|f\|_{L^{q}(B_{R_1}(z))}^{p'}+\|u\|^p_{L^\infty(B_{R_1}(z))}+\mathrm{Tail}_{p-1,s\,p}\left(u;z,R_1\right)^p\right],
\]
where
\[
\beta=\frac{\gamma\,\alpha}{N+\alpha\,p+\gamma}>0.
\]
This shows that $u$ belongs to the Campanato space $\mathcal{L}^{p,N+\beta\,p}(B_{R_0}(z))$, which is isomorphic to $C^{0,\beta}(\overline{B_{R_0}(z)})$. This implies the desired conclusion.
\end{proof}

\section{Almost \texorpdfstring{$C^s$}{Cs}-regularity: homogeneous case}
\label{sec:almost}
In what follows, we use the notation $B_R$ for the $N-$dimensional open ball of radius $R$, centered at the origin. The cornerstone of our main result is the following integrability gain for the $s-$derivative
\[
\frac{\delta^2_h u}{|h|^s},
\]
of an $(s,p)-$harmonic function.
\begin{prop}
\label{prop:improve} 
Assume $p\ge 2$ and $0<s<1$. Let  $u\in W^{s,p}_{\rm loc}(B_2)\cap L^{p-1}_{s\,p}(\mathbb{R}^N)$ be a local weak solution of $(-\Delta_p)^s u=0$ in $B_2$. Suppose that 
\begin{equation}
\label{bounds}
\|u\|_{L^\infty(B_1)}\leq 1\qquad \mbox{ and }\qquad \mathrm{Tail}_{p-1,s\,p}(u;0,1)^{p-1}=\int_{\mathbb{R}^N\setminus B_1} \frac{|u(y)|^{p-1}}{|y|^{N+s\,p}}\,  dy\leq 1,
\end{equation} 
and that for some $q\ge p$ and $0<h_0<1/10$ we have
\[
\sup_{0<|h|< h_0}\left\|\frac{\delta^2_h u }{|h|^s}\right\|_{L^q(B_1)}^q<+\infty.
\]
Then for every radius $4\,h_0<R\le 1-5\,h_0$
we have
$$
\sup_{0<|h|< h_0}\left\|\frac{\delta^2_h u}{|h|^{s}}\right\|_{L^{q+1}(B_{R-4\,h_0})}^{q+1}\leq C\,\left(\sup_{0<|h|< h_0}\left\|\frac{\delta^2_h u }{|h|^s}\right\|_{L^q(B_{R+4\,h_0})}^q+1\right).
$$
Here $C=C(N,s,p,q,h_0)>0$ and $C\nearrow +\infty$ as $h_0\searrow 0$.
\end{prop}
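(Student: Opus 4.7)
The proof uses a Moser-type iteration adapted to the nonlocal, nonlinear setting. Fix $h$ with $0 < |h| < h_0$; since both $u$ and its translate $u_h := u(\cdot + h)$ are weak solutions of $(-\Delta_p)^s w = 0$ in a ball slightly smaller than $B_2$, subtracting the two weak formulations produces the \emph{differentiated equation}
\[
\iint_{\mathbb{R}^{2N}} \frac{\bigl[J_p(u_h(x) - u_h(y)) - J_p(u(x) - u(y))\bigr]\bigl(\varphi(x) - \varphi(y)\bigr)}{|x - y|^{N + sp}}\,dx\,dy = 0
\]
for every $\varphi$ compactly supported in that smaller region.

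The main step is a Caccioppoli-type estimate obtained from a carefully chosen test function. Pick a cutoff $\eta \in C_c^\infty(B_{R - 3h_0})$ with $\eta \equiv 1$ on $B_{R - 4h_0}$, $0 \le \eta \le 1$, and $|\nabla \eta| \le C/h_0$, and test with $\varphi := \eta^p\, J_\gamma(\delta_h u)\,|h|^{-s\gamma}$ for the exponent $\gamma := q - p + 3 \ge 3$ (since $q \ge p \ge 2$). With the identifications $A := u(x) - u(y)$ and $V := \delta_h u$, so that $u_h(x) - u_h(y) = A + (V(x) - V(y))$, the pointwise monotonicity inequalities of Appendix~\ref{sec:app} for $J_p$ (with $p \ge 2$) and for $J_\gamma$ (with $\gamma \ge 2$) combine to give, on the diagonal of the kernel, a lower bound of order $\eta^p(y)\,|V(x) - V(y)|^{p+\gamma-2}/|h|^{s\gamma} = \eta^p(y)\,|\delta_h u(x) - \delta_h u(y)|^{q+1}/|h|^{s\gamma}$. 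The commutator terms coming from $\delta_h(\eta^p)$ are controlled via Young's inequality, absorbing part into the main term at the cost of an $L^q$-integral of $\delta_h u/|h|^s$ on a slightly larger ball; the nonlocal tail contributions from $y \in \mathbb{R}^N \setminus B_{R + h_0}$ are controlled by Lemma~\ref{lm:allarga} together with the hypotheses $\|u\|_{L^\infty(B_1)} \le 1$ and $\mathrm{Tail}_{p-1, sp}(u;0,1) \le 1$ from~\eqref{bounds}. Invoking Lemma~\ref{lm:erik2} converts the leftover $L^q$-norm of $\delta_h u/|h|^s$ into a bound on $\sup_{|h'|<h_0}\|\delta^2_{h'} u/|h'|^s\|_{L^q}$ plus an $L^\infty$-controlled additive constant, producing the intermediate estimate
\[
\iint_{B_{R-4h_0}\times B_{R-4h_0}}\frac{|\delta_h u(x)/|h|^s - \delta_h u(y)/|h|^s|^{q+1}}{|x - y|^{N + sp}}\,dx\,dy \le C\Bigl(\sup_{|h'|<h_0}\bigl\|\delta^2_{h'} u/|h'|^s\bigr\|_{L^q(B_{R + 4h_0})}^q + 1\Bigr).
\]

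It remains to extract the claimed $L^{q+1}$-norm estimate on $\delta^2_h u/|h|^s$ from this Sobolev-Slobodecki\u{\i}-type seminorm bound. Setting $v := \delta_h u/|h|^s$ so that $\delta_h v = \delta^2_h u/|h|^s$, I restrict the double integral on the left-hand side to the annular shell $\{(x,y) : |y - x - h| < |h|/4\}$, on which $|x-y| \sim |h|$ and the kernel $|x-y|^{-N-sp}$ is comparable to $|h|^{-N-sp}$. Comparing the value of $v(y) - v(x)$ in this shell with $\delta_h v(x) = v(x+h) - v(x)$ via a short Poincar\'e-averaging step (which loses only a multiplicative constant and a small radius $h_0$) yields $|h|^{-sp}\,\|\delta_h v\|_{L^{q + 1}(B_{R - 5h_0})}^{q+1} \lesssim $ (RHS above). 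Multiplying by $|h|^{sp} \le h_0^{sp}$ and absorbing constants gives the claim, up to a harmless adjustment of the inner radius from $R - 5h_0$ to $R - 4h_0$. The main obstacle is the precise tuning of $\gamma$ and the pointwise monotonicity inequality so that the diagonal contribution produces exactly $|\cdot|^{q+1}$ while the $\eta$-commutator and tail errors remain absorbable; the closing step of converting the integrated Sobolev-Slobodecki\u{\i} seminorm into a pointwise-in-$h$ Nikolskii-type bound on $\delta_h v$ likewise requires care, since the natural embeddings between these function spaces are in general only one-sided.
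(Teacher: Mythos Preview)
Your strategy is the paper's: differentiate the equation in $h$, test with $\eta^p J_{\beta+1}(\delta_h u)$ for $\beta=q-p+2$ (your $\gamma=\beta+1$), use the four--point inequality \eqref{erik} together with \eqref{holder} to extract a fractional energy of order $q+1$, and then pass to a Nikolskii bound on $\delta^2_h u$. Your ``shell restriction plus averaging'' closing step is a hands--on proof of the embedding $W^{sp/(q+1),q+1}\hookrightarrow\mathcal{N}^{sp/(q+1),q+1}_\infty$; the paper instead applies $W^{s,p}\hookrightarrow\mathcal{N}^{s,p}_\infty$ (Proposition~2.6 of \cite{Brolin}) directly to the \emph{power} $|\delta_h u|^{(\beta-1)/p}\delta_h u\,\eta$ and only afterwards invokes \eqref{holder}. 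Both routes reach the same endpoint.

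Two points need fixing. First, the $|h|$--power in your intermediate estimate is wrong for $p>2$: the Caccioppoli step actually yields
\[
\frac{1}{|h|^{s(q-p+2)}}\iint_{B_{R-4h_0}\times B_{R-4h_0}}\frac{|\delta_h u(x)-\delta_h u(y)|^{q+1}}{|x-y|^{N+sp}}\,dx\,dy\le C\Bigl(\sup_{0<|h'|<h_0}\Bigl\|\tfrac{\delta^2_{h'} u}{|h'|^s}\Bigr\|_{L^q(B_{R+4h_0})}^q+1\Bigr),
\]
not $|h|^{-s(q+1)}$ (your normalisation $|h|^{-s\gamma}$ of the test function gives $s\gamma=s(q-p+3)$, and the two match only when $p=2$). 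This is harmless: your shell argument applied to the correct version still gives $\|\delta^2_h u\|_{L^{q+1}}^{q+1}\le C|h|^{s(q+2)}(\ldots)$, which is stronger than needed. Second, the commutator you have to absorb comes from the \emph{spatial} difference $\eta(x)^p-\eta(y)^p$, not from $\delta_h(\eta^p)$; for $p>2$ it carries the factor $|u(x)-u(y)|^{p-2}$, and after Young's inequality this produces, in addition to an $L^q$--norm of $\delta_h u/|h|^s$, a genuine $[u]_{W^{\alpha,q}}^q$ term with some $\alpha<s$. This cannot be absorbed into the main term and must be bounded separately; the paper handles it via Lemma~\ref{lm:erik2} applied to $u$ itself, which converts it back into $\sup_{0<|h'|<h_0}\|\delta^2_{h'}u/|h'|^s\|_{L^q}^q+1$.
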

\begin{proof}
We first observe that $u\in L^\infty_{\rm loc}(B_2)$, thanks to Theorem~\ref{teo:loc_bound} and Remark \ref{oss:stupid}. Thus the first hypothesis does make sense. We divide the proof into five steps.
\vskip.2cm\noindent
{\noindent}{\bf Step 1: Discrete differentiation of the equation.}
For notational simplicity, set 
\[
r=R-4\,h_0\qquad \mbox{ and }\qquad d\mu(x,y)=\frac{dx\,dy}{|x-y|^{N+s\,p}}.
\]
Take a test function $\varphi\in W^{s,p}(B_R)$, vanishing outside $B_{(R+r)/2}$.
By testing \eqref{equation} with $\varphi_{-h}$ for $h\in\mathbb{R}^N\setminus\{0\}$ with $|h|<h_0$ and then changing variables, we get
\begin{equation}
\label{equationh}
\iint_{\mathbb{R}^N\times\mathbb{R}^N} \Big(J_p(u_h(x)-u_h(y))\Big)\,\Big(\varphi(x)-\varphi(y)\Big)\,d\mu(x,y)=0.
\end{equation}
We subtract \eqref{equation} from \eqref{equationh} and divide by $|h|>0$ to get
\begin{equation}
\label{differentiated}
\begin{split}
\int_{\mathbb{R}^N\times\mathbb{R}^N} \frac{J_p(u_h(x)-u_h(y))-J_p(u(x)-u(y))}{|h|}\,\Big(\varphi(x)-\varphi(y)\Big)\,d\mu=0,
\end{split}
\end{equation}
for every $\varphi\in W^{s,p}(B_R)$, vanishing outside $B_{(R+r)/2}$. Let $\beta\ge 1$ and $\vartheta>0$, and use \eqref{differentiated} with the test function
\[
\varphi=J_{\beta+1}\left(\frac{u_h-u}{|h|^\vartheta}\right)\,\eta^p,\qquad 0<|h|< h_0,
\]
where $\eta$ is a non-negative standard Lipschitz cut-off function such that
\[
\eta\equiv 1 \quad \mbox{ on } B_r,\qquad \eta\equiv 0 \quad \mbox{ on } \mathbb{R}^N\setminus B_{(R+r)/2}.\qquad |\nabla \eta|\le \frac{C}{R-r}=\frac{C}{4\,h_0}.
\] 
Note that these assumptions implies
$$
\left|\frac{\delta_h \eta}{|h|}\right|\leq \frac{C}{h_0}.
$$
We get
\[
\begin{split}
\iint_{\mathbb{R}^N\times\mathbb{R}^N}& \frac{\Big(J_p(u_h(x)-u_h(y))-J_p(u(x)-u(y))\Big)}{|h|^{1+\vartheta\,\beta}}\\
&\times\Big(J_{\beta+1}(u_h(x)-u(x))\,\eta(x)^p-J_{\beta+1}(u_h(y)-u(y))\,\eta(y)^p\Big)\,d\mu=0.
\end{split}
\]
The double integral is now divided into three pieces:
\[
\begin{split}
\mathcal{I}_1:=\iint_{B_R\times B_R}& \frac{\Big(J_p(u_h(x)-u_h(y))-J_p(u(x)-u(y))\Big)}{|h|^{1+\vartheta\,\beta}}\\
&\times\Big(J_{\beta+1}(u_h(x)-u(x))\,\eta(x)^p-J_{\beta+1}(u_h(y)-u(y))\,\eta(y)^p\Big)d\mu,
\end{split}
\]
\[
\begin{split}
\mathcal{I}_2:=\iint_{B_\frac{R+r}{2}\times (\mathbb{R}^N\setminus B_R)}& \frac{\Big(J_p(u_h(x)-u_h(y))-J_p(u(x)-u(y))\Big)}{|h|^{1+\vartheta\,\beta}}\,J_{\beta+1}(u_h(x)-u(x))\,\eta(x)^p\,d\mu,
\end{split}
\]
and
\[
\begin{split}
\mathcal{I}_3:=-\iint_{(\mathbb{R}^N\setminus B_R)\times B_\frac{R+r}{2}}& \frac{\Big(J_p(u_h(x)-u_h(y))-J_p(u(x)-u(y))\Big)}{|h|^{1+\vartheta\,\beta}}\,J_{\beta+1}(u_h(y)-u(y))\,\eta(y)^p\,d\mu,
\end{split}
\]
where we used that $\eta$ vanishes identically outside $B_{(R+r)/2}$.
\par
We will estimate $\mathcal{I}_1$ in what follows. We start by observing that
\[
\begin{split}
J_{\beta+1}(u_h(x)-u(x))\,\eta(x)^p&-J_{\beta+1}(u_h(y)-u(y))\,\eta(y)^p\\
&=\frac{\Big(J_{\beta+1}(u_h(x)-u(x))-J_{\beta+1}(u_h(y)-u(y))\Big)}{2}\,\Big(\eta(x)^p+\eta(y)^p\Big)\\
&+\frac{\Big(J_{\beta+1}(u_h(x)-u(x))+J_{\beta+1}(u_h(y)-u(y))\Big)}{2}\,(\eta(x)^p-\eta(y)^p).
\end{split}
\]
Thus
\[
\begin{split}
\Big(J_p(u_h(x)-u_h(y))&-J_p(u(x)-u(y))\Big)\,\Big(J_\beta(u_h(x)-u(x))\,\eta(x)^p-J_\beta(u_h(y)-u(y))\,\eta(y)^p\Big)\\
&\ge \Big(J_p(u_h(x)-u_h(y))-J_p(u(x)-u(y))\Big)\\
&\times \Big(J_{\beta+1}(u_h(x)-u(x))-J_{\beta+1}(u_h(y)-u(y))\Big)\,\left(\frac{\eta(x)^p+\eta(y)^p}{2}\right)\\
&-\Big|J_p(u_h(x)-u_h(y))-J_p(u(x)-u(y))\Big|\\
&\times \Big(|u_h(x)-u(x)|^\beta+|u_h(y)-u(y)|^\beta\Big)\,\left|\frac{\eta(x)^p-\eta(y)^p}{2}\right|.
\end{split}
\]
The first term in the last expression has a positive sign. For the negative term, we proceed like this: we use \eqref{lipschitz}, Young's inequality and then \eqref{erik2}
\[
\begin{split}
\Big|J_p(u_h(x)-u_h(y))&-J_p(u(x)-u(y))\Big|\Big(|u_h(x)-u(x)|^\beta+|u_h(y)-u(y)|^\beta\Big)\,\left|\frac{\eta(x)^p-\eta(y)^p}{2}\right|\\
&\le 2\,\frac{p-1}{p}\,\left(|u_h(x)-u_h(y)|^\frac{p-2}{2}+|u(x)-u(y)|^\frac{p-2}{2}\right)\\
&\times\left||u_h(x)-u_h(y)|^\frac{p-2}{2}\,(u_h(x)-u_h(y))-|u(x)-u(y)|^\frac{p-2}{2}\,(u(x)-u(y))\right|\\
&\times \Big(|u_h(x)-u(x)|^\beta+|u_h(y)-u(y)|^\beta\Big)\,\frac{\eta(x)^\frac{p}{2}+\eta(y)^\frac{p}{2}}{2}\,\left|\eta(x)^\frac{p}{2}-\eta(y)^\frac{p}{2}\right|\\
&\le \frac{C}{\varepsilon}\,\left(|u_h(x)-u_h(y)|^\frac{p-2}{2}+|u(x)-u(y)|^\frac{p-2}{2}\right)^2\\
&\times (|u_h(x)-u(x)|^{\beta+1}+|u_h(y)-u(y)|^{\beta+1})\,\left|\eta(x)^\frac{p}{2}-\eta(y)^\frac{p}{2}\right|^2\\
&+C\,\varepsilon\,\left||u_h(x)-u_h(y)|^\frac{p-2}{2}\,(u_h(x)-u_h(y))-|u(x)-u(y)|^\frac{p-2}{2}\,(u(x)-u(y))\right|^2\\
&\times \Big(|u_h(x)-u(x)|^{\beta-1}+|u_h(y)-u(y)|^{\beta-1}\Big)\,\Big(\eta(x)^p+\eta(y)^p\Big)\\
&\le \frac{C}{\varepsilon}\,\left(|u_h(x)-u_h(y)|^\frac{p-2}{2}+|u(x)-u(y)|^\frac{p-2}{2}\right)^2\\
&\times (|u_h(x)-u(x)|^{\beta+1}+|u_h(y)-u(y)|^{\beta+1})\,\left|\eta(x)^\frac{p}{2}-\eta(y)^\frac{p}{2}\right|^2\\
&+C\,\varepsilon\,\Big(J_p(u_h(x)-u_h(y))-J_p(u(x)-u(y))\Big)\\
&\times\Big(J_{\beta+1}(u_h(x)-u(x))-J_{\beta+1}(u_h(y)-u(y))\Big)\,\Big(\eta(x)^p+\eta(y)^p\Big),
\end{split}
\]
where $C=C(p)>0$ and $\varepsilon>0$ is arbitrary. By putting all the estimates together and choosing $\varepsilon$ sufficiently small, we then get for a different $C=C(p)>0$
\[
\begin{split}
\mathcal{I}_1&\ge \frac{1}{C}\,\iint_{B_R\times B_R}  \frac{\Big(J_p(u_h(x)-u_h(y))-J_p(u(x)-u(y))\Big)}{|h|^{1+\vartheta\,\beta}}\\
&\times\Big(J_{\beta+1}(u_h(x)-u(x))-J_{\beta+1}(u_h(y)-u(y))\Big)\,(\eta(x)^p+\eta(y)^p)\,d\mu\\
&-C\,\iint_{B_R\times B_R} \left(|u_h(x)-u_h(y)|^\frac{p-2}{2}+|u(x)-u(y)|^\frac{p-2}{2}\right)^2\,\left|\eta(x)^\frac{p}{2}-\eta(y)^\frac{p}{2}\right|^2\\
&\times \frac{|u_h(x)-u(x)|^{\beta+1}+|u_h(y)-u(y)|^{\beta+1}}{|h|^{1+\vartheta\,\beta}}\,d\mu.
\end{split}
\]
We can further estimate the positive term by using \eqref{erik}. This leads us to
\[
\begin{split}
\mathcal{I}_1&\ge c\iint_{B_R\times B_R} \left|\frac{|\delta_h u(x)|^\frac{\beta-1}{p}\,\delta_h u(x)}{|h|^\frac{1+\vartheta\,\beta}{p}}-\frac{|\delta_h u(y)|^\frac{\beta-1}{p}\,\delta_h u(y)}{|h|^\frac{1+\vartheta\,\beta}{p}}\right|^p\,(\eta(x)^p+\eta(y)^p)\,d\mu\\
&-C\,\iint_{B_R\times B_R} \left(|u_h(x)-u_h(y)|^\frac{p-2}{2}+|u(x)-u(y)|^\frac{p-2}{2}\right)^2\,\left|\eta(x)^\frac{p}{2}-\eta(y)^\frac{p}{2}\right|^2\,\frac{|\delta_h u(x)|^{\beta+1}+|\delta_h u(y)|^{\beta+1}}{|h|^{1+\vartheta\,\beta}}\,d\mu,
\end{split}
\]
where $c=c(p,\beta)>0$ and $C=C(p)>0$. We now observe that if we set for simplicity
\[
A=\frac{|\delta_h u(x)|^\frac{\beta-1}{p}\,\delta_h u(x)}{|h|^\frac{1+\vartheta\,\beta}{p}}\qquad \mbox{ and }\qquad B=\frac{|\delta_h u(y)|^\frac{\beta-1}{p}\,\delta_h u(y)}{|h|^\frac{1+\vartheta\,\beta}{p}}
\]
and then use the convexity of $\tau\mapsto \tau^p$, we have
\[
\begin{split}
\left|A\,\eta(x)-B\,\eta(y)\right|^p&=\left|(A-B)\,\frac{\eta(x)+\eta(y)}{2}+(A+B)\,\frac{\eta(x)-\eta(y)}{2}\right|^p\\
&\le \frac{1}{2}\,|A-B|^p\,\left|\eta(x)+\eta(y)\right|^p\\
&+\frac{1}{2}\, |A+B|^p\,\left|\eta(x)-\eta(y)\right|^p\\
&\le 2^{p-2}\,|A-B|^p\, (\eta(x)^p+\eta(y)^p)\\
&+2^{p-2}\, (|A|^p+|B|^p)\, |\eta(x)-\eta(y)|^p.
\end{split}
\]
We thus get the following lower bound for $\mathcal{I}_1$
\[
\begin{split}
\mathcal{I}_1\ge c& \left[\frac{|\delta_h u|^\frac{\beta-1}{p}\,\delta_h u}{|h|^\frac{1+\vartheta\,\beta}{p}}\,\eta\right]^p_{W^{s,p}(B_R)}\\
&-C\,\iint_{B_R\times B_R} \left(|u_h(x)-u_h(y)|^\frac{p-2}{2}+|u(x)-u(y)|^\frac{p-2}{2}\right)^2\,\left|\eta(x)^\frac{p}{2}-\eta(y)^\frac{p}{2}\right|^2\\
&\times \frac{|u_h(x)-u(x)|^{\beta+1}+|u_h(y)-u(y)|^{\beta+1}}{|h|^{1+\vartheta\,\beta}}\,d\mu\\
&-C\,\iint_{B_R\times B_R}\, \left(\frac{|\delta_h u(x)|^{\beta-1+p}}{|h|^{1+\vartheta\,\beta}}+\frac{|\delta_h u(y)|^{\beta-1+p}}{|h|^{1+\vartheta\,\beta}}\right)\, |\eta(x)-\eta(y)|^p\,d\mu,
\end{split}
\]
where $c=c(p,\beta)>0$ and $C=C(p,\beta)>0$. By recalling that $\mathcal{I}_1+\mathcal{I}_2+\mathcal{I}_3=0$ and using the estimate for $\mathcal{I}_1$, we arrive at
\begin{equation}\label{Ieq}
	\left[\frac{|\delta_h u|^\frac{\beta-1}{p}\,\delta_h u}{|h|^\frac{1+\vartheta\,\beta}{p}}\,\eta\right]^p_{W^{s,p}(B_R)}\leq C\,\Big(\mathcal{I}_{11}+\mathcal{I}_{12}+|\mathcal{I}_2|+|\mathcal{I}_3|\Big),\quad \mbox{ for }C=C(p,\beta)>0, 
\end{equation}
where we set 
\begin{equation}
\label{I11}
\begin{split}
\mathcal{I}_{11}&:=\,\iint_{B_R\times B_R} \left(|u_h(x)-u_h(y)|^\frac{p-2}{2}+|u(x)-u(y)|^\frac{p-2}{2}\right)^2\,\left|\eta(x)^\frac{p}{2}-\eta(y)^\frac{p}{2}\right|^2\, \frac{|\delta_h u(x)|^{\beta+1}+|\delta_h u(y)|^{\beta+1}}{|h|^{1+\vartheta\,\beta}}\,d\mu,
\end{split}
\end{equation}
and
\[
\begin{split}
\mathcal{I}_{12}&:=\,\iint_{B_R\times B_R}\, \left(\frac{|\delta_h u(x)|^{\beta-1+p}}{|h|^{1+\vartheta\,\beta}}+\frac{|\delta_h u(y)|^{\beta-1+p}}{|h|^{1+\vartheta\,\beta}}\right)\, |\eta(x)-\eta(y)|^p\,d\mu.
\end{split}
\]
\vskip.2cm
{\noindent}{\bf Step 2: Estimates of the local terms $\mathcal{I}_{11}$ and $\mathcal{I}_{12}$.}
We first treat $\mathcal{I}_{11}$, by estimating
$$
\iint_{B_R\times B_R} |u(x)-u(y)|^{p-2}\,\left|\eta(x)^\frac{p}{2}-\eta(y)^\frac{p}{2}\right|^2
\frac{|\delta_h u(x)|^{\beta+1}}{|h|^{1+\vartheta\,\beta}}\,d\mu.
$$
The other terms of $\mathcal{I}_{11}$ can be dealt with similarly. By using that $\eta$ is Lipschitz and that $p\ge 2$, we get
\[
\begin{split}
\iint_{B_R\times B_R} |u(x)-u(y)|^{p-2}\,&\left|\eta(x)^\frac{p}{2}-\eta(y)^\frac{p}{2}\right|^2
\frac{|\delta_h u(x)|^{\beta+1}}{|h|^{1+\vartheta\,\beta}}\,d\mu\\
&\le \frac{C}{h_0^2}\, \iint_{B_R\times B_R} \frac{|u(x)-u(y)|^{p-2}}{|x-y|^{N+s\,p-2}}\,
\frac{|\delta_h u(x)|^{\beta+1}}{|h|^{1+\vartheta\,\beta}}\,dx\,dy.
\end{split}
\]
For $p=2$, the last term reduces to 
\[
\begin{split}
\iint_{B_R\times B_R} \frac{1}{|x-y|^{N+2\,s-2}}\,
\frac{|\delta_h u(x)|^{\beta+1}}{|h|^{1+\vartheta\,\beta}}\,dx\,dy&=\int_{B_R}\left(\int_{B_R} \frac{dy}{|x-y|^{N+2\,s-2}}\right)\,
\frac{|\delta_h u(x)|^{\beta+1}}{|h|^{1+\vartheta\,\beta}}\,dx\\
&\le C\,\int_{B_R}\frac{|\delta_h u(x)|^{\beta+1}}{|h|^{1+\vartheta\,\beta}}\,dx\\
&\le C\,\|u\|_{L^\infty(B_{R+h_0})}\,\int_{B_R}\frac{|\delta_h u(x)|^{\beta}}{|h|^{1+\vartheta\,\beta}}\,dx\le C\,\int_{B_R}\frac{|\delta_h u(x)|^{\beta}}{|h|^{1+\vartheta\,\beta}}\,dx
\end{split}
\]
for some $C=C(N,s)>0$.
\par
For $p>2$, we take instead 
\begin{equation}
\label{epsilon}
\varepsilon=\min\left\{\frac{p-2}{2},\, \frac{1}{s}-1\right\}>0,
\end{equation}
then by Young's inequality with exponents $q/(p-2)$ and $q/(q-p+2)$ we have\footnote{We realize that the second term of $\mathcal{I}_{11}$ can be estimated by exactly the same quantity except that $B_R$ is replaced by $B_{R+h_0}$ in the first term. This is the reason why we changed $B_R$ to $B_{R+h_0}$ above, so that both terms in $\mathcal{I}_{11}$ enjoy the same estimate.}
\[
\begin{split}
\iint_{B_R\times B_R}& \frac{|u(x)-u(y)|^{p-2}}{|x-y|^{N+s\,p-2}}\, \frac{|\delta_h u(x)|^{\beta+1}}{|h|^{1+\vartheta\,\beta}}\, dx\,dy\\
&\leq C\,[u]_{W^{\frac{s\,(p-2-\varepsilon)}{p-2},q}(B_R)}^q+\left(\frac{C}{h_0}\right)^\frac{2\,q}{q-p+2}\,\int_{B_R}\left(\int_{B_R}|x-y|^{\frac{q\,(2-2\,s-\varepsilon\, s)}{q-p+2}-N}\, dy\right)\frac{|\delta_h u(x)|^{(\beta+1)\frac{q}{q-p+2}}}{|h|^{(1+\vartheta\,\beta)\frac{q}{q-p+2}}} dx\\
&\leq C\,[u]_{W^{\frac{s\,(p-2-\varepsilon)}{p-2},q}(B_{R+h_0})}^q+\left(\frac{C}{h_0}\right)^\frac{2\,q}{q-p+2}\,\frac{q-p+2}{q\,(2-2\,s-\varepsilon\,s)}\,R^{\frac{q\,(2-2\,s-\varepsilon\,s)}{q-p+2}}\,\int_{B_R}\frac{|\delta_h u_h(x)|^{\frac{(\beta+1)\, q}{q-p+2}}}{|h|^{(1+\vartheta\,\beta)\frac{q}{q-p+2}}} dx\\
&\le C\,[u]_{W^{\frac{s\,(p-2-\varepsilon)}{p-2},q}(B_{R+h_0})}^q+C\,\|u\|_{L^\infty(B_{R+h_0})}^\frac{q}{q-p+2}\,\int_{B_R}\frac{|\delta_h u_h(x)|^{\frac{\beta\, q}{q-p+2}}}{|h|^{(1+\vartheta\,\beta)\frac{q}{q-p+2}}} dx\
\end{split}
\]
where $C=C(N,h_0,p,s,q)>0$.  We also used that $q\geq p$. Observe that the choice \eqref{epsilon} of $\varepsilon$ 
assures 
that
\[
\frac{q\,(2-2s-\varepsilon\, s)}{q-p+2}>0.
\]
In order to estimate the term 
\[
[u]_{W^{\frac{s\,(p-2-\varepsilon)}{p-2},q}(B_{R+h_0})}^q,
\] 
we use the Sobolev embedding, namely Lemma \ref{lm:erik2} with
\[
\alpha=s\,\frac{(p-2-\varepsilon)}{p-2},\quad \beta=s,\quad r=R+4\,h_0,\qquad \varrho=R+h_0.
\]
By further using that $u$ is locally bounded, more precisely by \eqref{bounds}, and since $R + 4h_0 \leq 1$, we get
$$
[u]_{W^{\frac{s\,(p-2-\varepsilon)}{p-2},q}(B_{R+h_0})}^q\leq C\,\left(\sup_{0<|h|< h_0}\left\|\frac{\delta^2_h u}{|h|^s}\right\|_{L^q(B_{R+4\,h_0})}^q+1\right).
$$
Therefore, for $p\ge 2$ we get
\[
|\mathcal{I}_{11}|\leq C\,\left(\int_{B_R}\frac{|\delta_h u(x)|^{\frac{\beta\, q}{q-p+2}}}{|h|^{(1+\vartheta\,\beta)\frac{q}{q-p+2}}}\, dx +\sup_{0<|h|< h_0}\left\|\frac{\delta^2_h u}{|h|^s}\right\|_{L^q(B_{R+4h_0})}^q+1\right),
\]
where $C=C(N,h_0,p,s,q)>0$.
\par
As for $\mathcal{I}_{12}$, we have
\begin{align}
\iint_{B_R\times B_R} \frac{|\delta_h u(x)|^{\beta-1+p}}{|h|^{1+\vartheta\,\beta}}\, \frac{|\eta(x)-\eta(y)|^p}{|x-y|^{N+s\,p}}\,dx\,dy&\le C\int_{B_R} \frac{|\delta_h u(x)|^{\beta-1+p}}{|h|^{1+\vartheta\,\beta}}\,dx\label{I2estusedagain}\\
&\le C\left( \int_{B_R} \frac{|\delta_h u(x)|^{\beta\,\frac{q}{q-p+2}}}{|h|^{(1+\vartheta\,\beta)\,\frac{q}{q-p+2}}}\,dx+\int_{B_R} |\delta_h u(x)|^\frac{q\,(p-1)}{p-2}\,dx\right)\nonumber\\
&\le C\left( \int_{B_R} \frac{|\delta_h u|^{\beta\,\frac{q}{q-p+2}}}{|h|^{(1+\vartheta\,\beta)\,\frac{q}{q-p+2}}}\,dx+1\right)\nonumber,
\end{align}
where we used the Lipschitz character of $\eta$, treated the integral in $y$ as we did above, used the fact that $\beta-1+p\geq \beta$, Young's inequality with exponents\footnote{Only in the case $p>2$, for $p=2$ it is not needed, we simply estimate
\[
|\delta_h u(x)|^{\beta+1}\le 2\,\|u\|_{L^\infty(B_{R+h_0})}\,|\delta_h u(x)|^{\beta}.
\]} $q/(p-2)$ and $q/(q-p+2)$ and that $u$ is bounded. Here $C=C(N,h_0,p,s,q)>0$. Thus, we obtain
\[
|\mathcal{I}_{12}|\leq C\left( \int_{B_R} \frac{|\delta_h u|^{\beta\,\frac{q}{q-p+2}}}{|h|^{(1+\vartheta\,\beta)\,\frac{q}{q-p+2}}}\,dx+1\right),\qquad\mbox{ with } C=C(N,h_0,p,s,q)>0.
\]
If we now use these estimates in \eqref{Ieq}, we get
\begin{equation}
\label{Ieq2}
\begin{split}
\left[\frac{|\delta_h u|^\frac{\beta-1}{p}\,\delta_h u}{|h|^\frac{1+\vartheta\,\beta}{p}}\,\eta\right]^p_{W^{s,p}(B_R)}&\leq C\,\left(\int_{B_R}\left|\frac{\delta_h u}{|h|^\frac{1+\vartheta\,\beta}{\beta}}\right|^\frac{\beta\,q}{q-p+2}\, dx +\sup_{0<|h|< h_0}\left\|\frac{\delta^2_h u}{|h|^s}\right\|_{L^q(B_{R+4h_0})}^q+1\right)\\
&+C\,\Big(|\mathcal{I}_2|+|\mathcal{I}_3|\Big),\qquad \mbox{ with }C=C(h_0,N,p,s,q)>0.
\end{split}
\end{equation}
{\noindent}{\bf Step 3: Estimates of the nonlocal terms $\mathcal{I}_2$ and $\mathcal{I}_3$.}
Both nonlocal terms $\mathcal{I}_2$ and $\mathcal{I}_3$ can be treated in the same way. We only estimate $\mathcal{I}_2$ for simplicity. Since $|u|\le 1$ in $B_1$, we have 
\[
\begin{split}
\Big|(J_p(u_h(x)-u_h(y))-J_p(u(x)-u(y))\, J_{\beta+1}(\delta_h u(x))\Big|&\leq C\left(1+|u_h(y)|^{p-1}+|u(y)|^{p-1}\right)\,|\delta_h u(x)|^{\beta},
\end{split}
\]
where $C=C(p)>0$. For $x\in B_{(R+r)/2}$ we have $B_{(R-r)/2}(x)\subset B_{R}$ and thus
$$
\int_{\mathbb{R}^N\setminus B_{R}}\frac{1}{|x-y|^{N+s\,p}}\,  dy\leq \int_{\mathbb{R}^N\setminus B_\frac{R-r}{2}(x)} \frac{1}{|x-y|^{N+s\,p}}\, dy\leq C(N,h_0,p,s),
$$
by recalling that $R-r=4\,h_0$.
By using Lemma \ref{lm:allarga} we get for $x \in  B_{(R+r)/2}$ and then Lemma \ref{lm:tails}
\[
\begin{split}
\int_{\mathbb{R}^N\setminus B_{R}} \frac{|u(y)|^{p-1}}{|x-y|^{N+s\,p}}\, dy&\le \left(\frac{2\,R}{R-r}\right)^{N+s\,p}\,\int_{\mathbb{R}^N\setminus B_R} \frac{|u(y)|^{p-1}}{|y|^{N+s\,p}} \, dy
\\
&\le \left(\frac{2\,R}{R-r}\right)^{N+s\,p}\,\int_{\mathbb{R}^N\setminus B_1} \frac{|u(y)|^{p-1}}{|y|^{N+s\,p}} \, dy+\left(\frac{2\,R}{R-r}\right)^{N+s\,p}\,R^{-N}\,\int_{B_1} |u|^{p-1}\,dy\\
&\leq C(N,h_0,p,s).
\end{split}
\]
In the last estimate we have used the bounds assumed on $u$ and $4\,h_0< R \leq 1$. The term involving $u_h$ can be estimated similarly.
Recall also that $\eta=0$ outside $B_{(R+r)/2}$. Hence, we have
\begin{align}
|\mathcal{I}_2|+|\mathcal{I}_3|&\leq C\,\int_{B_{\frac{R+r}{2}}}\frac{|\delta_h u|^{\beta}}{|h|^{1+\vartheta\beta}} dx\leq C\,\left(1+\int_{B_{R}}\left|\frac{\delta_h u}{|h|^{\frac{1+\vartheta\beta}{\beta}}}\right|^{\frac{\beta\, q}{q-p+2}}\,dx\right) \label{I3I4est}, 
\end{align}
by Young's inequality. Here $C=C(N,h_0,s,p)>0$ as always. By inserting this estimate in \eqref{Ieq2}, we obtain
\begin{equation}
\label{I2I3}
\begin{split}
\left[\frac{|\delta_h u|^\frac{\beta-1}{p}\,\delta_h u}{|h|^\frac{1+\vartheta\,\beta}{p}}\,\eta\right]^p_{W^{s,p}(B_R)}&\leq C\,\left(\int_{B_{R}}\left|\frac{\delta_h u}{|h|^{\frac{1+\vartheta\beta}{\beta}}}\right|^{\frac{\beta\, q}{q-p+2}}\,dx+\sup_{0<|h|< h_0}\left\|\frac{\delta^2_h u}{|h|^s}\right\|_{L^q(B_{R+4h_0})}^q+1\right)
\end{split}
\end{equation}
{\bf Step 4: Going back to the equation.}
For $\xi,h\in\mathbb{R}^N\setminus\{0\}$ such that $|h|,|\xi|<h_0$, we use inequality \eqref{holder} with the choices
\[
A=u(x+h+\xi)-u(x+\xi),\qquad B=u(x+h)-u(x),\qquad \gamma=\frac{\beta-1+p}{p}
\]
to arrive at 
\begin{equation}
\label{est3}
\left\|\frac{\delta_\xi \delta_h u}{|\xi|^\frac{s\,p}{\beta-1+p}\,|h|^\frac{1+\vartheta\,\beta}{\beta-1+p}}\right\|^{\beta-1+p}_{L^{\beta-1+p}(B_r)}
\leq C \left\|\frac{\delta_\xi \left(|\delta_h u|^\frac{\beta-1}{p}\,\delta_h u\right)}{|\xi|^s\,|h|^\frac{1+\vartheta\,\beta}{p}}\right\|^p_{L^p(B_r)}
\leq C\,\left\|\eta\,\frac{\delta_\xi}{|\xi|^s}\left(\frac{|\delta_h u|^\frac{\beta-1}{p}\,(\delta_h u)}{|h|^\frac{1+\vartheta\,\beta}{p}}\right)\right\|^p_{L^p(\mathbb{R}^N)},
\end{equation}
where $C=C(\beta)>0$. Here we used that $\eta \equiv 1$ on $B_r$.
Now by the discrete Leibniz rule \eqref{leibniz}, we write
$$
\eta\,\delta_\xi\left(|\delta_h u|^\frac{\beta-1}{p}\,(\delta_h u)\right) = \delta_\xi\left(\eta\,|\delta_h u|^\frac{\beta-1}{p}\,(\delta_h u)\right)-\left(|\delta_h u|^\frac{\beta-1}{p}\,(\delta_h u)\right)_\xi\,\delta_\xi\eta,
$$
and thus find
\begin{equation}\label{extraest}
\left\|\frac{\delta_\xi \delta_h u}{|\xi|^\frac{s\,p}{\beta-1+p}\,|h|^\frac{1+\vartheta\,\beta}{\beta-1+p}}\right\|^{\beta-1+p}_{L^{\beta-1+p}(B_r)}
\leq C\, \left\|\frac{\delta_\xi}{|\xi|^s}\left(\frac{|\delta_h u|^\frac{\beta-1}{p}\,(\delta_h u)\,\eta}{|h|^\frac{1+\vartheta\,\beta}{p}}  \right)\right\|^p_{L^p(\mathbb{R}^N)} 
+ C\, \left\|\frac{\delta_\xi\eta}{|\xi|^s}\frac{\left(|\delta_h u|^\frac{\beta-1}{p}\,(\delta_h u)\right)_\xi}{|h|^\frac{1+\vartheta\,\beta}{p}}\right\|^p_{L^p(\mathbb{R}^N)},
\end{equation}
where $C=C(p,\beta)>0$. For the first term in \eqref{extraest}, we apply \cite[Proposition 2.6]{Brolin} with the choice
\[
\psi=\frac{|\delta_h u|^\frac{\beta-1}{p}\,(\delta_h u)\,\eta}{|h|^\frac{1+\vartheta\,\beta}{p}},
\]
and get
\begin{equation}
\label{est2}
\sup_{|\xi|>0}\left\|\frac{\delta_\xi}{|\xi|^s}\frac{|\delta_h u|^\frac{\beta-1}{p}\,(\delta_h u)\,\eta}{|h|^\frac{1+\vartheta\,\beta}{p}}\right\|^p_{L^p(\mathbb{R}^N)}\leq C\,(1-s)\left[\frac{|\delta_h u|^\frac{\beta-1}{p}\,(\delta_h u)\,\eta}{|h|^\frac{1+\vartheta\,\beta}{p}}\right]^p_{W^{s,p}(B_R)}, 
\end{equation}
where $C=C(N,h_0,p)>0$. Here we also used that $\frac{R+r}{2} + 2h_0 = R$.
\par
As for the second term in \eqref{extraest}, we observe that for every $0<|\xi|<h_0$
\begin{equation}
\label{arrivotre}
\begin{split}
\left\|\frac{\delta_\xi\eta}{|\xi|^s}\frac{\left(|\delta_h u|^\frac{\beta-1}{p}\,(\delta_h u)\right)_\xi}{|h|^\frac{1+\vartheta\,\beta}{p}}\right\|^p_{L^p(\mathbb{R}^N)}&\leq
 C\,\left\|\frac{\left(|\delta_h u|^\frac{\beta-1}{p}\,(\delta_h u)\right)_\xi}{|h|^\frac{1+\vartheta\,\beta}{p}}\right\|^p_{L^p(B_{\frac{R+r}{2}+h_0})}\\
&\leq C\,\int_{B_{\frac{R+r}{2}+2h_0}}\frac{|\delta_h u|^{\beta-1+p}}{|h|^{1+\vartheta\,\beta}} dx\\
&\leq C\,\int_{B_{R}}\frac{|\delta_h u|^{\beta}}{|h|^{1+\vartheta\,\beta}} dx\\
&\leq C\,\left(\int_{B_R}\left|\frac{\delta_h u}{|h|^\frac{1+\vartheta\beta}{\beta}}\right|^\frac{q\,\beta}{q-p+2} dx+1\right), 
\end{split}
\end{equation}
where $C=C(N,h_0,p,s)>0$. Here we have used the estimates of $\nabla \eta$, the fact that $u$ is bounded and Young's inequality with exponents $q/(p-2)$ and $q/(q-p+2)$.
\par
From \eqref{extraest}, \eqref{est2}, and \eqref{arrivotre} we get for any $0 < |\xi| < h_0$
\begin{equation}
\label{est4}
\left\|\frac{\delta_\xi \delta_h u}{|\xi|^\frac{s\,p}{\beta-1+p}|h|^\frac{1+\vartheta\,\beta}{\beta-1+p}}\right\|^{\beta-1+p}_{L^{\beta-1+p}(B_r)}\leq C\,\left[\frac{|\delta_h u|^\frac{\beta-1}{p}\,(\delta_h u)}{|h|^\frac{1+\vartheta\,\beta}{p}}\eta\right]^p_{W^{s,p}(B(R))}+C\,\left(\int_{B_{R}}\left|\frac{\delta_h u}{|h|^\frac{1+\vartheta\beta}{\beta}}\right|^\frac{q\,\beta}{q-p+2} dx+1\right),
\end{equation}
with $C=C(N,h_0,s,p,\beta)>0$.
We then choose $\xi=h$ and take the supremum over $h$ for $0<|h|< h_0$. Then \eqref{est4} together with \eqref{I2I3} imply
\begin{equation}\label{almostfinal}
\sup_{0<|h|< h_0}\int_{B_r}\left|\frac{\delta^2_h u}{|h|^\frac{1+s\,p+\vartheta\,\beta}{\beta-1+p}}\right|^{\beta-1+p}\,dx\leq C\,\left(\sup_{0<|h|< h_0}\int_{B_R}\left|\frac{\delta_h u}{|h|^\frac{1+\vartheta\,\beta}{\beta}}\right|^\frac{q\,\beta}{q-p+2}\,dx +\sup_{0<|h|< h_0}\left\|\frac{\delta^2_h u}{|h|^s}\right\|_{L^q(B_{R+4h_0})}^q+ 1\right),
\end{equation}
where $C=C(N,h_0,p,q,s,\beta)>0$. By observing that $(1+\vartheta\,\beta)/\beta=s< 1$, we can use the second estimate of Lemma \ref{lm:erik2} and replace the first order difference quotients in the right-hand side of \eqref{almostfinal} with second order ones. This gives
\begin{equation}
\label{finalbeforeexp}
\sup_{0<|h|< h_0}\int_{B_r}\left|\frac{\delta^2_h u}{|h|^\frac{1+s\,p+\vartheta\,\beta}{\beta-1+p}}\right|^{\beta-1+p}\,dx\leq C\,\left(\sup_{0<|h|< h_0}\left\|\frac{\delta^2_h u }{|h|^\frac{1+\vartheta\,\beta}{\beta}}\right\|_{L^\frac{q\,\beta}{q-p+2}(B_{R+4h_0})}^\frac{q\,\beta}{q-p+2}+\sup_{0<|h|< h_0}\left\|\frac{\delta^2_h u}{|h|^s}\right\|_{L^q(B_{R+4\,h_0})}^q+ 1\right),
\end{equation}
for some constant $C=C(N,h_0,p,q,s,\beta)>0$.
\vskip.2cm\noindent
{\bf Step 5: Conclusion.}
We now specify our choices for $\beta$ and $\vartheta$. We set 
\begin{equation}
\label{betatheta}
\beta=q-p+2\qquad \mbox{ and }\qquad \vartheta=\frac{(q-p+2)\,s-1}{q-p+2}.
\end{equation}
In particular we obtain
\[
\frac{1+s\,p+\vartheta\,\beta}{\beta-1+p}=\frac{s}{q+1}+s,
\]
\[
\beta-1+p=q+1,\qquad \frac{q\,\beta}{q-p+2}=q,
\]
and
\[
\frac{1+\vartheta\,\beta}{\beta}=s.
\]
Then \eqref{finalbeforeexp} becomes
$$
\sup_{0<|h|< h_0}\left\|\frac{\delta^2_h u}{|h|^{\frac{s}{q+1}+s}}\right\|_{L^{q+1}(B_r)}^{q+1}\leq C\,\left(\sup_{0<|h|< h_0}\left\|\frac{\delta^2_h u }{|h|^s}\right\|_{L^q(B_{R+4h_0})}^q+1\right), 
$$
where $C=C(N,h_0,p,q,s)>0$. In particular, up to modifying the constant $C$, we obtain
$$
\sup_{0<|h|< h_0}\left\|\frac{\delta^2_h u}{|h|^{s}}\right\|_{L^{q+1}(B_{R-4\,h_0})}^{q+1}\leq C\,\left(\sup_{0<|h|< h_0}\left\|\frac{\delta^2_h u }{|h|^s}\right\|_{L^q(B_{R+4h_0})}^q+1\right),
$$
as desired, where we recall that $r=R-4\,h_0$.
\end{proof}
By iterating the previous result, we can obtain the following regularity result for $(s,p)-$harmonic functions. This is the main outcome of this section.
\begin{teo}[Almost $C^s$ regularity for $(s,p)-$harmonic functions.]
\label{teo:1}
Let $\Omega\subset\mathbb{R}^N$ be a bounded and open set, $p\ge 2$ and $0<s<1$. Suppose $u\in W^{s,p}_{\rm loc}(\Omega)\cap L^{p-1}_{s\,p}(\mathbb{R}^N)$ is a local weak solution of 
\[
(-\Delta_p)^s u=0\qquad \mbox{ in }\Omega.
\] 
Then $u\in C^\delta_{\rm loc}(\Omega)$ for every $0<\delta<s$. 
\par
More precisely, for every $0<\delta<s$ and every ball $B_{2R}(x_0)\Subset\Omega$, there exists a constant $C=C(N,s,p,\delta)>0$ such that we have the scaling invariant estimate
\begin{equation}
\label{apriori}
[u]_{C^\delta(B_{R/2}(x_0))}\leq \frac{C}{R^\delta}\,\left(\|u\|_{L^\infty(B_{R}(x_0))}+R^{s-\frac{N}{p}}\,[u]_{W^{s,p}(B_{R}(x_0))}+\mathrm{Tail}_{p-1,s\,p}(u;x_0,R)\right).
\end{equation}
\end{teo}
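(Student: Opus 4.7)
The plan is to bootstrap Proposition~\ref{prop:improve} finitely many times, starting from the natural $L^p$ integrability of $\delta^2_h u/|h|^s$ encoded in the hypothesis $u \in W^{s,p}_{\rm loc}$, and then to conclude via the Morrey-type embedding of Theorem~\ref{thm:Nholder}. First I would reduce to the normalized setting by scaling. Given $B_{2R}(x_0) \Subset \Omega$ and $0 < \delta < s$, set
$$M := \|u\|_{L^\infty(B_R(x_0))} + R^{s-N/p}\,[u]_{W^{s,p}(B_R(x_0))} + \mathrm{Tail}_{p-1,sp}(u;x_0,R) + 1,$$
and put $v(y) := u(x_0 + Ry)/M$. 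By the homogeneity of $(-\Delta_p)^s$ under dilations, $v$ is a local weak solution of $(-\Delta_p)^s v = 0$ on $B_2$, and each of the scale-invariant quantities $\|v\|_{L^\infty(B_1)}$, $[v]_{W^{s,p}(B_1)}$, $\mathrm{Tail}_{p-1,sp}(v;0,1)$ is $\le 1$. It therefore suffices to produce $[v]_{C^{\delta}(B_{r_*})} \le C(N,s,p,\delta)$ for some $r_* = r_*(\delta) \in (0,1)$; a standard covering of $B_{1/2}$ by small balls then recovers the estimate there, and unscaling finally yields \eqref{apriori}.

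To seed the iteration, I would establish the base bound $\sup_{0<|h|<h_0}\|\delta^2_h v/|h|^s\|_{L^p(B_{1-h_0})} \le C$ for some $h_0 > 0$. This is a consequence of the strong-to-weak Besov embedding $W^{s,p}(\mathbb{R}^N) \hookrightarrow \mathcal{N}^{s,p}_\infty(\mathbb{R}^N)$ (in other words, $B^{s,p}_{p,p}\subset B^{s,p}_{p,\infty}$) applied to $v\,\chi$ for a cutoff $\chi \in C_c^\infty(B_1)$ with $\chi \equiv 1$ on $B_{1-h_0}$; combined with the pointwise estimate $|\delta^2_h v| \le |\delta_h v(\cdot+h)| + |\delta_h v|$ and a Leibniz calculation analogous to \eqref{stupid} to strip the cutoff, it delivers the claimed local bound on second differences.

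The heart of the argument is to iterate Proposition~\ref{prop:improve}. Fix an integer $k$ such that $p+k > N/(s-\delta)$, and choose $h_0 = h_0(\delta) \in (0,\,1/(9k))$ small enough that $k$ successive applications leave a positive radius. Starting from the base $L^p$ bound on $B_{1-h_0}$ and picking $R_j = 1 - (8j+5)h_0$ at the $j$-th step (so that $R_j + 4h_0$ matches the radius produced at the previous step), each application of Proposition~\ref{prop:improve} raises the integrability by one unit while shrinking the working ball by $8h_0$; after $k$ steps I reach
$$\sup_{0<|h|<h_0}\left\|\frac{\delta^2_h v}{|h|^s}\right\|_{L^{p+k}(B_{r_*})} \le C(N,s,p,\delta), \qquad r_* := 1 - (8k+1)h_0 > 0.$$
To convert this into Hölder regularity, I would pick a cutoff $\tilde\chi \in C_c^\infty(B_{r_*})$ with $\tilde\chi \equiv 1$ on $B_{r_*/2}$, and use the Leibniz-rule manipulations of Lemma~\ref{lm:erik2} to produce the global bound $v\,\tilde\chi \in \mathcal{B}^{s,p+k}_\infty(\mathbb{R}^N)$. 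Then Lemma~\ref{lm:fromBtoN} upgrades this to $v\,\tilde\chi \in \mathcal{N}^{s,p+k}_\infty(\mathbb{R}^N)$, and Theorem~\ref{thm:Nholder} applied with $\beta = s$ and $q = p+k$ (for which $s - N/q > \delta$ by the choice of $k$) yields $v \in C^{0,\delta}(B_{r_*/2})$ with a quantitative bound of the form desired in \eqref{apriori}.

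The hard part will be keeping the constants under control across the finite iteration: the constant in Proposition~\ref{prop:improve} blows up as $h_0 \searrow 0$, so $h_0$ must be fixed at the outset purely in terms of $\delta$ (once $k$ has been selected); after that commitment, only finitely many bounded constants multiply and the whole argument stays quantitative. A secondary technicality is that the initial Besov embedding and the final Morrey step are global statements on $\mathbb{R}^N$, so both require cutoff arguments to localize the estimates, with the product/Leibniz estimates from Lemma~\ref{lm:erik2} ensuring that the localization only costs bounded factors.
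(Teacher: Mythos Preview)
Your proposal is correct and follows essentially the same route as the paper: normalize by scaling, seed the iteration with the $W^{s,p}$ bound on $\delta^2_h v/|h|^s$, apply Proposition~\ref{prop:improve} finitely many times (with $h_0$ fixed in advance by the number of steps) to push the integrability past $N/(s-\delta)$, then pass through a cutoff, Lemma~\ref{lm:fromBtoN}, and Theorem~\ref{thm:Nholder}. The only cosmetic differences are that the paper arranges the radii explicitly so the iteration terminates on $B_{3/4}$ and the final cutoff lands directly on $B_{1/2}$ (avoiding your covering step), and it cites \cite[Proposition~2.6]{Brolin} for the seed bound rather than phrasing it as a Besov embedding.
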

\begin{proof}
We first observe that $u\in L^\infty_{\rm loc}(\Omega)$, by Theorem~\ref{teo:loc_bound} and Remark \ref{oss:stupid}.
  We assume for simplicity that $x_0=0$, then we set 
\[
\mathcal{M}_R=\|u\|_{L^\infty(B_{R})}+R^{s-\frac{N}{p}}\,[u]_{W^{s,p}(B_{R})}+\mathrm{Tail}_{p-1,s\,p}(u;0,R)>0.
\]
We point out that it is sufficient to prove that the rescaled function
\[
u_R(x):=\frac{1}{\mathcal{M}_R}\,u(R\,x),\qquad \mbox{ for }x\in B_2,
\]
satifies the estimate
\[
[u_R]_{C^{\delta}(B_{1/2})}\leq C.
\]
By scaling back, we would get the desired estimate. Observe that by definition, the function $u_R$ 
is a local weak solution of $(-\Delta_p)^s u=0$ in $B_2$ and satisfies
\begin{equation}
\label{assumption}
\|u_R\|_{L^\infty(B_1)}\leq 1,\qquad \int_{\mathbb{R}^N\setminus B_1}\frac{|u_R(y)|^{p-1}}{|y|^{N+s\,p}}\,  dy\leq 1,\qquad [u_R]_{W^{s,p}(B_1)}\leq 1.
\end{equation}
In what follows, we forget the subscript $R$ and simply write $u$ in place of $u_R$, in order not to overburden the presentation.
\vskip.2cm\noindent
We fix $0<\delta<s$ and choose $i_\infty\in\mathbb{N}\setminus\{0\}$ such that
\[
s-\delta> \frac{N}{p+i_\infty}.
\]
Then we define the sequence of exponents  
\[
q_i=p+i,\qquad i=0,\dots,i_\infty.
\]
We define also 
$$
h_0=\frac{1}{64\,i_\infty},\qquad R_i=\frac{7}{8}-4\,(2i+1)\,h_0=\frac{7}{8}-\frac{2i+1}{16\,i_\infty},\qquad \mbox{ for } i=0,\dots,i_\infty.
$$
We note that 
\[
R_0+4\,h_0=\frac{7}{8}\qquad \mbox{ and }\qquad R_{i_\infty-1}-4\,h_0=\frac{3}{4}.
\] 
By applying Proposition \ref{prop:improve} with\footnote{We observe that by construction we have
\[
4\,h_0<R_i\le 1-5\,h_0,\qquad \mbox{ for } i=0,\dots,i_\infty-1.
\]
Thus these choices are admissible in Proposition \ref{prop:improve}.} 
\[
R=R_i\qquad \mbox{ and }\qquad q=q_i=p+i,\qquad \mbox{ for } i=0,\ldots,i_\infty-1,
\] 
and observing that $R_i-4\,h_0=R_{i+1}+4\,h_0$,
we obtain the iterative scheme of inequalities
\[
\left\{\begin{array}{rcll}
\sup\limits_{0<|h|< h_0}\left\|\dfrac{\delta^2_h u}{|h|^{s}}\right\|_{L^{q_1}(B_{R_1+4h_0})}&\leq& C\,\sup\limits_{0<|h|< h_0}\left(\left\|\dfrac{\delta^2_h u }{|h|^s}\right\|_{L^p(B_{7/8})}+1\right)\\
&&&\\
\sup\limits_{|h|\leq h_0}\left\|\dfrac{\delta^2_h u}{|h|^{s}}\right\|_{L^{q_i+1}(B_{R_{i+1}+4h_0})}&\leq& C\,\sup\limits_{0<|h|< h_0}\left(\left\|\dfrac{\delta^2_h u }{|h|^s}\right\|_{L^{q_i}(B_{R_i+4h_0})}+1\right),&\mbox{ for } i=1,\ldots,i_\infty-2,
\end{array}
\right.
\]
and finally
$$
\sup_{0<|h|< {h_0}}\left\|\frac{\delta^2_h u}{|h|^{s}}\right\|_{L^{q_{i_\infty}}(B_{3/4})}=\sup_{0<|h|< h_0}\left\|\frac{\delta^2_h u}{|h|^{s}}\right\|_{L^{p+i_\infty}(B_{R_{i_\infty-1}-4h_0})}\leq C\sup_{0<|h|< h_0}\left(\left\|\frac{\delta^2_h u }{|h|^s}\right\|_{L^{p+i_\infty-1}(B_{R_{i_\infty-1}+4h_0})}+1\right).
$$
Here $C=C(N,\delta,p,s)>0$ as always. We note that by \cite[Proposition 2.6]{Brolin} together with the relation 
$$
\delta_h u =\frac12\left(\delta_{2h} u-\delta^2_{h}u\right),
$$ 
we have
\begin{align}\label{eq:1sttofrac}
\sup_{0<|h|< h_0}\left\|\frac{\delta^2_h u }{|h|^s}\right\|_{L^{p}(B_{7/8})}&\leq C\,\sup_{0<|h|<2\,h_0}\left\|\frac{\delta_h u }{|h|^s}\right\|_{L^{p}(B_{7/8})}\nonumber \\
&\leq C\left([u]_{W^{s,p}(B_{7/8+2\,h_0})}+\|u\|_{L^\infty(B_{7/8+2\,h_0})}\right)\\
&\leq C\left([u]_{W^{s,p}(B_1)}+\|u\|_{L^\infty(B_1)}\right)\leq C(N,\delta,s,p),\nonumber 
\end{align}
where we also have used the assumptions \eqref{assumption} on $u$. Hence, the iterative scheme of inequalities leads us to
\begin{equation}
\sup_{0<|h|< {h_0}}\left\|\frac{\delta^2_h u}{|h|^{s}}\right\|_{L^{q_{i_\infty}}(B_{3/4})}\leq C(N,\delta,p,s).
\label{otherest}
\end{equation}
Take now $\chi\in C_0^\infty(B_{5/8})$ such that 
$$
0\leq \chi\leq 1, \qquad \chi =1 \text{ in $B_{1/2}$},\qquad |\nabla \chi|\leq C,\qquad |D^2 \chi|\leq C.
$$
In particular we have for all $|h| > 0$
$$
\frac{|\delta_h\chi|}{|h|^s}\leq C,\qquad \frac{|\delta^2_h\chi|}{|h|^s}\leq C.
$$
We also recall that
$$
\delta^2_h (u\,\chi)=\chi_{2h}\,\delta^2_h u+2\,\delta_h u\, \delta_h \chi_h+u\,\delta^2_h\chi.
$$
Hence, for $0<|h|< h_0$
\begin{align}\label{Neq}
[u\,\chi]_{\mathcal{B}^{s,q_{i_\infty}}_\infty(\mathbb{R}^N)}=\left\|\frac{\delta^2_h (u\,\chi)}{|h|^s}\right\|_{L^{q_{i_\infty}}(\mathbb{R}^N)}&\leq C\,\left(\left\|\frac{\chi_{2h}\,\delta^2_h u}{|h|^s}\right\|_{L^{q_{i_\infty}}(\mathbb{R}^N)}+\left\|\frac{\delta_h u\,\delta_h\chi}{|h|^s}\right\|_{L^{q_{i_\infty}}(\mathbb{R}^N)}+\left\|\frac{u\,\delta^2_h\chi}{|h|^s}\right\|_{L^{q_{i_\infty}}(\mathbb{R}^N)}\right) \nonumber\\
&\leq C\,\left(\left\|\frac{\delta^2_h u}{|h|^s}\right\|_{L^{q_{i_\infty}}(B_{5/8+2\,h_0})}+\|\delta_h u\|_{L^{q_{i_\infty}}(B_{5/8+h_0})}+\|u\|_{L^{q_{i_\infty}}(B_{5/8+2h_0})}\right) \\
&\leq C\,\left(\left\|\frac{\delta^2_h u}{|h|^s}\right\|_{L^{q_{i_\infty}}(B_{3/4})}+\|u\|_{L^{q_{i_\infty}}(B_{3/4})}\right)\leq C(N,\delta,p,s), \nonumber
\end{align}
by \eqref{otherest}. By Lemma \ref{lm:fromBtoN}
\begin{equation}\label{Neq2}
[u\,\chi]_{\mathcal{N}_\infty^{s,q_{i_\infty}}(\mathbb{R}^N)}\leq C(N,\delta,s)\,[u\,\chi]_{\mathcal{B}_\infty^{s,q_{i_\infty}}(\mathbb{R}^N)}\leq C(N,\delta,p,s).
\end{equation}
Finally, by noting that thanks to the choice of $i_\infty$ we have
\[
s\,q_{i_\infty}>N\qquad \mbox{and }\qquad \delta<s-\frac{N}{q_{i_\infty}},
\] 
we may apply Theorem~\ref{thm:Nholder} with  $\beta=s$, $\alpha=\delta$ and $q=q_{i_\infty}$ to obtain
$$
[u]_{C^\delta(B_{1/2})}= [u\,\chi]_{C^\delta(B_{1/2})}\leq C\left([u\,\chi]_{\mathcal{N}_\infty^{s,q_{i_\infty}}(\mathbb{R}^N)}\right)^{\frac{\delta\,q_{i_\infty}+N}{s\,q_{i_\infty}}}\,\left(\|u\,\chi\|_{L^q(\mathbb{R}^N)}\right)^\frac{(s-\delta)\,q_{i_\infty}-N}{s\,q_{i_\infty}}\leq C(N,\delta,p,s).
$$
This concludes the proof.
\end{proof}
\begin{oss}
\label{oss:flexibility}
Under the assumptions of the previous theorem, a covering argument combined with \eqref{apriori} implies the more flexible estimate: for every $0<\sigma<1$
\[
[u]_{C^\delta(B_{\sigma R}(x_0))}\leq \frac{C}{R^\delta}\,\left(\|u\|_{L^\infty(B_{R}(x_0))}+R^{s-\frac{N}{p}}\,[u]_{W^{s,p}(B_{R}(x_0))}+\mathrm{Tail}_{p-1,s\,p}(u;x_0,R)\right),
\]
with $C$ now depending on $\sigma$ as well (and blowing-up as $\sigma\nearrow 1$). Indeed, if $\sigma\le 1/2$ then this is immediate. If $1/2<\sigma<1$, then we can cover $\overline{B_{\sigma\,R}(x_0)}$ with a finite number of balls $B_{r/2}(x_1),\dots,B_{r/2}(x_k)$, where
\[
x_i\in B_{R/2}(x_0)\qquad \mbox{ and }\qquad r=\frac{R}{2}.
\]
By using \eqref{apriori} on every ball $B_{2\,r}(x_i)\Subset B_{2R}(x_0)\Subset\Omega$, we get
\[
[u]_{C^\delta(B_{r/2}(x_i))}\leq \frac{C}{r^\delta}\,\left(\|u\|_{L^\infty(B_{r}(x_i))}+r^{s-\frac{N}{p}}\,[u]_{W^{s,p}(B_{r}(x_i))}+\mathrm{Tail}_{p-1,s\,p}(u;x_i,r)\right).
\]
By observing that $B_{r}(x_i)\subset B_R(x_0)$, summing over $i$ and using Lemma \ref{lm:tails} for the tail term, we get the desired conclusion.
\end{oss}

\section{Improved H\"older regularity: the homogeneous case}
\label{sec:higher}
Now that we know that a solution to the homogeneous equation is locally H\"older $\delta-$continuous for any $0<\delta<s$, we can obtain the following improvement of Proposition \ref{prop:improve}, which provided an increased integrability estimate. In contrast, the following proposition also increases the differentiability.
\begin{prop}\label{prop:improve2}
Assume $p\ge 2$ and $0<s<1$. Let  $u\in W^{s,p}_{\rm loc}(B_2)\cap L^{p-1}_{s\,p}(\mathbb{R}^N)$ be a local weak solution of $(-\Delta_p)^s u=0$ in $B_2$. Suppose that 
\[
\|u\|_{L^\infty(B_1)}\leq 1\qquad \mbox{ and }\qquad  \int_{\mathbb{R}^N\setminus B_1} \frac{|u(y)|^{p-1}}{|y|^{N+s\,p}}\,  dy\leq 1.
\]
Assume further that for some $0<h_0<1$, $0<\vartheta<1$ and $\beta>1$ such that $(1+\vartheta\, \beta)/\beta<1$, we have
\[
\sup_{0<|h|\leq h_0}\left\|\frac{\delta^2_h u }{|h|^\frac{1+\vartheta\, \beta}{\beta}}\right\|_{L^\beta(B_{1})}^\beta<+\infty.
\]
Then for every radius $4\,h_0<R\le 1-5\,h_0$
we have
$$
\sup_{0<|h|< h_0}\left\|\frac{\delta^2_h u}{|h|^{\frac{1+s\,p+\vartheta\,\beta}{\beta-1+p}}}\right\|_{L^{\beta-1+p}(B_{R-4\,h_0})}^{\beta-1+p}\leq C\,\sup_{0<|h|< h_0}\left(\left\|\frac{\delta^2_h u }{|h|^\frac{1+\vartheta\, \beta}{\beta}}\right\|_{L^\beta(B_{R+4\,h_0})}^\beta+1\right).
$$
Here $C$ depends on the $N$, $h_0$, $s$, $p$ and $\beta$.

\end{prop}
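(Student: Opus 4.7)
The plan is to follow the scheme of Proposition \ref{prop:improve} verbatim in Steps 1, 3, and 4, and to replace Step 2 (the estimate of the local terms $\mathcal{I}_{11}, \mathcal{I}_{12}$) by a new argument exploiting the a priori H\"older regularity from Theorem \ref{teo:1}. Under the standing hypotheses $\|u\|_{L^\infty(B_1)} \le 1$ and $\mathrm{Tail}_{p-1,sp}(u;0,1)\le 1$, Theorem \ref{teo:1} combined with Remark \ref{oss:flexibility} gives a constant $\Lambda = \Lambda(N,s,p,\delta,h_0)$ such that $[u]_{C^\delta(B_{1-h_0})}\le \Lambda$ for every $0<\delta<s$. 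This uniform H\"older control is the new key ingredient and is precisely what decouples the exponent $\beta$ from the rigid value $\beta = q-p+2$ that Young's inequality forced in Proposition \ref{prop:improve}.

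After discretely differentiating the equation, testing against $\varphi = J_{\beta+1}(\delta_h u/|h|^{\vartheta})\,\eta^p$ with a cut-off $\eta$ adapted to the pair $(R-4h_0,R)$, and splitting into a local term $\mathcal{I}_1$ and nonlocal terms $\mathcal{I}_2, \mathcal{I}_3$, one derives the lower bound
\[
\mathcal{I}_1 \ge c\left[\frac{|\delta_h u|^{(\beta-1)/p}\delta_h u}{|h|^{(1+\vartheta\beta)/p}}\,\eta\right]^p_{W^{s,p}(B_R)} - C(\mathcal{I}_{11}+\mathcal{I}_{12})
\]
exactly as in Step 1 of Proposition \ref{prop:improve}, using \eqref{xxx}, \eqref{lipschitz}, \eqref{erik2}, and \eqref{erik}. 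For $\mathcal{I}_{11}$, I would fix $\delta<s$ so close to $s$ that $\delta(p-2) + 2 > sp$, which is possible since $s<1$ yields $s(p-2)+2 = sp + 2(1-s) > sp$. Using $|u(x)-u(y)|^{p-2} \le \Lambda^{p-2}|x-y|^{\delta(p-2)}$ for $x,y\in B_{R+h_0}$ together with $|\eta(x)^{p/2}-\eta(y)^{p/2}|^2 \le C h_0^{-2}|x-y|^2$, the resulting kernel $|x-y|^{\delta(p-2)+2-N-sp}$ is locally integrable, and integration in $y$ yields
\[
\mathcal{I}_{11}\le C\int_{B_{R+h_0}}\frac{|\delta_h u|^{\beta+1}}{|h|^{1+\vartheta\beta}}\,dx \le C\int_{B_{R+h_0}}\frac{|\delta_h u|^{\beta}}{|h|^{1+\vartheta\beta}}\,dx,
\]
since $|\delta_h u|\le \Lambda h_0^{\delta}$ absorbs the extra factor. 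The term $\mathcal{I}_{12}$ is treated analogously: the computation \eqref{I2estusedagain} reduces it to $\int_{B_R}|\delta_h u|^{\beta-1+p}/|h|^{1+\vartheta\beta}\,dx$, and $|\delta_h u|^{p-1}\le \Lambda^{p-1}h_0^{\delta(p-1)}$ absorbs the extra powers without invoking Young's inequality, so $\beta$ stays a free parameter.

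For Steps 3 and 4 I would reproduce the arguments of Proposition \ref{prop:improve} essentially without change. The nonlocal terms $\mathcal{I}_2, \mathcal{I}_3$ are bounded by $C\bigl(1+\int_{B_R}|\delta_h u|^{\beta}/|h|^{1+\vartheta\beta}\,dx\bigr)$ via the hypotheses on $\|u\|_{L^\infty(B_1)}$ and $\mathrm{Tail}_{p-1,sp}(u;0,1)$. The pointwise inequality \eqref{holder} applied with $\gamma=(\beta-1+p)/p$, $A=u(x+h+\xi)-u(x+\xi)$, $B=u(x+h)-u(x)$ then yields the crucial $\delta_\xi\delta_h u$ estimate; the Leibniz manipulation of \eqref{extraest}--\eqref{arrivotre} transfers the cut-off; and \cite[Proposition 2.6]{Brolin} bounds the $\delta_\xi$-difference-quotient by the $W^{s,p}$-seminorm already controlled. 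Choosing $\xi=h$, taking the supremum over $0<|h|<h_0$, and finally applying the second inequality in Lemma \ref{lm:erik2} (whose hypothesis $(1+\vartheta\beta)/\beta<1$ is precisely our assumption) to replace first-order difference quotients on the right-hand side by second-order ones yields the claimed estimate. The main obstacle is identifying the threshold $\delta(p-2)+2>sp$ that makes the H\"older substitution work, and keeping all constants uniform in $|h|<h_0$; once this is done, the rest of the argument runs in parallel with Proposition \ref{prop:improve}.
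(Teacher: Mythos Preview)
Your proposal is correct and follows essentially the same approach as the paper's proof: reuse the framework of Proposition~\ref{prop:improve} verbatim except for the estimate of $\mathcal{I}_{11}$ (and the simplification of $\mathcal{I}_{12}$), where the a~priori $C^{s-\varepsilon}$ bound from Theorem~\ref{teo:1}/Remark~\ref{oss:flexibility} replaces the Young-inequality splitting and thus frees~$\beta$. Your integrability threshold $\delta(p-2)+2>sp$ is exactly the paper's condition $\varepsilon<2(1-s)/(p-2)$ rewritten with $\delta=s-\varepsilon$, and the remaining steps (nonlocal terms, \eqref{holder}, Leibniz, Lemma~\ref{lm:erik2} under $(1+\vartheta\beta)/\beta<1$) coincide with the paper's.
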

\begin{proof} We will go back to the estimates in the proof of Proposition \ref{prop:improve}. The only difference is that we estimate the term $\mathcal{I}_{11}$ defined in \eqref{I11} differently. From Theorem~\ref{teo:1} and Remark \ref{oss:flexibility}, we can choose 
\[
0<\varepsilon<\min\left\{s,\,2\,\frac{1-s}{p-2}\right\},
\] 
such that 
$$
[u]_{C^{s-\varepsilon}(B_R)}\leq C(N,h_0,p,s).
$$
Using this together with the assumed regularity of $\eta$, we have
$$
\frac{|u(x)-u(y)|^{p-2}\,\left|\eta(x)^\frac{p}{2}-\eta(y)^\frac{p}{2}\right|^2}{|x-y|^{N+s\,p}}\leq C\,|x-y|^{-N+2\,(1-s)-\varepsilon\,(p-2)}.
$$
Thanks to the choice of $\varepsilon$, the last exponent is strictly less than $N$ and we may conclude
$$
\int_{B_R}\frac{|u(x)-u(y)|^{p-2}\,\left|\eta(x)^\frac{p}{2}-\eta(y)^\frac{p}{2}\right|^2}{|x-y|^{N+s\,p}} dy\leq C(N,h_0,p,s),
$$
for any $x\in B_R$. Therefore we have the estimate
\[
\begin{split}
|\mathcal{I}_{11}|&\leq C\,\int_{B_R}\frac{|\delta_h u(x)|^{\beta+1}}{|h|^{1+\vartheta\,\beta}} dx\\
&\leq C\,\|u\|_{L^\infty(B_{R+h_0})}\,\int_{B_R}\frac{|\delta_h u(x)|^{\beta}}{|h|^{1+\vartheta\,\beta}} dx\le C\,\int_{B_R}\frac{|\delta_h u(x)|^{\beta}}{|h|^{1+\vartheta\,\beta}} dx,\qquad \mbox{�for some } C=C(N,h_0,p,s)>0.
\end{split}
\]
Going back to \eqref{I2estusedagain} we can also extract
$$
|\mathcal{I}_{12}|\leq C\int_{B_R} \frac{|\delta_h u(x)|^{\beta-1+p}}{|h|^{1+\vartheta\,\beta}}\,dx\leq C\int_{B_R}\frac{|\delta_h u(x)|^{\beta}}{|h|^{1+\vartheta\,\beta}} dx,\qquad \mbox{ for some }C=C(N,h_0,p,s)>0,
$$
where we used the local $L^\infty$ bound on $u$.
In addition, from the first inequality in \eqref{I3I4est} we have
$$
|\mathcal{I}_2|+|\mathcal{I}_3|\leq C\int_{B_R}\frac{|\delta_h u(x)|^{\beta}}{|h|^{1+\vartheta\,\beta}} dx, \qquad C=C(N,h_0,p,s)>0.
$$
Combining these new estimates with \eqref{est2}, \eqref{est3}, \eqref{arrivotre} and \eqref{Ieq}, we arrive at
$$
\sup_{0<|h|< h_0}\int_{B_r}\left|\frac{\delta^2_h u(x)}{|h|^\frac{1+s\,p+\vartheta\,\beta}{\beta-1+p}}\right|^{\beta-1+p}\,dx\leq C\sup_{0<|h|< h_0}\int_{B_R}\left|\frac{\delta_h u(x)}{|h|^\frac{1+\vartheta\,\beta}{\beta}}\right|^\beta\,dx,\quad  C=C(N,h_0,p,s,\beta)>0.
$$
By appealing again to the second estimate in Lemma \ref{lm:erik2} and using that
\[
\frac{1+\vartheta\,\beta}{\beta}<1,
\] 
we may replace the first order differential quotients in the right-hand side by second order ones, so that we arrive at
$$
\sup_{0<|h|< h_0}\int_{B_r}\left|\frac{\delta^2_h u(x)}{|h|^\frac{1+s\,p+\vartheta\,\beta}{\beta-1+p}}\right|^{\beta-1+p}\,dx\leq C\left(\sup_{0<|h|< h_0}\int_{B_R+4h_0}\left|\frac{\delta^2_h u(x)}{|h|^\frac{1+\vartheta\,\beta}{\beta}}\right|^\beta\,dx+1\right),\quad  C=C(N,h_0,p,s,\beta).
$$
Recalling again that $r=R-4\,h_0$ this concludes the proof.
\end{proof}

We are now ready to prove Theorem~\ref{mainthm} for the homogeneous equation, i.e. when $f\equiv 0$.
\begin{teo}
\label{teo:1higher}
Let $\Omega\subset\mathbb{R}^N$ be an open and bounded set, $p\ge 2$ and $0<s<1$.  Suppose $u\in W^{s,p}_{\rm loc}(\Omega)\cap L^{p-1}_{s\,p}(\mathbb{R}^N)$ is a local weak solution of 
\[
(-\Delta_p)^s u=0\qquad \mbox{ in }\Omega.
\] 
We define the exponent
\[
\Theta(s,p):=\min\left\{\frac{s\,p}{p-1},\,1\right\}.
\] 
Then $u\in C^\delta_{\rm loc}(\Omega)$ for every $0<\delta<\Theta(s,p)$. 
\par
More precisely, for every $0<\delta<\Theta(s,p)$ and every ball $B_{2R}(x_0)\Subset\Omega$, there exists a constant $C=C(N,s,p,\delta)>0$ such that we have the scaling invariant estimate
\[
[u]_{C^\delta(B_{R/2}(x_0))}\leq \frac{C}{R^\delta}\,\left(\|u\|_{L^\infty(B_{R}(x_0))}+R^{s-\frac{N}{p}}\,[u]_{W^{s,p}(B_{R}(x_0))}+\mathrm{Tail}_{p-1,s\,p}(u;x_0,R)\right).
\]
\end{teo}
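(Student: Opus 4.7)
The plan is to mirror the proof of Theorem~\ref{teo:1}, replacing the self-improving estimate of Proposition~\ref{prop:improve} by the stronger one of Proposition~\ref{prop:improve2}, which gains simultaneously in differentiability \emph{and} in integrability. After the same rescaling and normalization as in the beginning of the proof of Theorem~\ref{teo:1}, we may assume $x_0=0$, $R=1$, and that
\[
\|u\|_{L^\infty(B_1)}\le 1,\qquad \mathrm{Tail}_{p-1,sp}(u;0,1)\le 1,\qquad [u]_{W^{s,p}(B_1)}\le 1,
\]
and the task reduces to bounding $[u]_{C^\delta(B_{1/2})}$ by $C(N,s,p,\delta)$.

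Fix $0<s_0<s$ close to $s$. Theorem~\ref{teo:1} together with Remark~\ref{oss:flexibility} gives $[u]_{C^{s_0}(B_{7/8})}\le C(N,s,p,s_0)$. For any $\beta_0>1/s_0$, set $\vartheta_0:=s_0-1/\beta_0\in(0,1)$, so that $(1+\vartheta_0\beta_0)/\beta_0=s_0<1$. The pointwise bound $|\delta^2_h u|\le 2\,[u]_{C^{s_0}}\,|h|^{s_0}$ then shows that, for any $h_0\in(0,1/64)$, the seed quantity $\sup_{0<|h|<h_0}\|\delta^2_h u/|h|^{(1+\vartheta_0\beta_0)/\beta_0}\|_{L^{\beta_0}(B_{7/8})}$ is finite. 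We iterate Proposition~\ref{prop:improve2}: at each step where $\gamma_n:=(1+\vartheta_n\beta_n)/\beta_n<1$ it produces a bound at stage $n+1$ with $\beta_{n+1}=\beta_n+p-1$ and $\vartheta_{n+1}\beta_{n+1}=sp+\vartheta_n\beta_n$. Solving these recursions,
\[
\beta_n=\beta_0+n(p-1),\qquad \gamma_n=\frac{s_0\,\beta_0+n\,s\,p}{\beta_0+n(p-1)}\ \xrightarrow[n\to\infty]{}\ \frac{s\,p}{p-1}.
\]
Each application shrinks the working ball by $8h_0$, so fixing $h_0=(64\,n_\star)^{-1}$ for an integer $n_\star$ to be chosen arranges that $n_\star$ iterations fit between $B_{7/8}$ and $B_{3/4}$.

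Two cases arise. If $sp\le p-1$, then $\Theta=sp/(p-1)\le 1$ and $\gamma_n$ is strictly increasing toward $\Theta\le 1$, so the constraint $\gamma_n<1$ of Proposition~\ref{prop:improve2} is satisfied at every stage and we may iterate $n_\star$ times with $\gamma_{n_\star}$ arbitrarily close to $\Theta$. If instead $sp>p-1$, then $\Theta=1$ and $\gamma_n$ increases past $1$ at some finite stage; choosing $\beta_0$ large enough (as a function of $\delta$), we iterate up to the last admissible index, so that $\gamma_{n_\star}$ is arbitrarily close to $1$ from below and $\gamma_{n_\star}\beta_{n_\star}>N$, $\gamma_{n_\star}-N/\beta_{n_\star}>\delta$. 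In either case we end up with
\[
\sup_{0<|h|<h_0}\left\|\frac{\delta^2_h u}{|h|^{\gamma_{n_\star}}}\right\|_{L^{\beta_{n_\star}}(B_{3/4})}\le C(N,s,p,\delta).
\]
The closing argument of Theorem~\ref{teo:1} now applies verbatim: multiply by a cut-off $\chi\in C_0^\infty(B_{5/8})$ with $\chi\equiv 1$ on $B_{1/2}$, use the Leibniz identity $\delta^2_h(u\chi)=\chi_{2h}\,\delta^2_h u+2\,\delta_h u\,\delta_h\chi_h+u\,\delta^2_h\chi$ to upgrade to a uniform bound on $[u\chi]_{\mathcal{B}^{\gamma_{n_\star},\beta_{n_\star}}_\infty(\mathbb{R}^N)}$, pass to $[u\chi]_{\mathcal{N}^{\gamma_{n_\star},\beta_{n_\star}}_\infty(\mathbb{R}^N)}$ via Lemma~\ref{lm:fromBtoN}, and finish with the Morrey-type embedding of Theorem~\ref{thm:Nholder}.

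The substantive content is Proposition~\ref{prop:improve2} itself, which is already in hand. The main remaining obstacle is the bookkeeping in Case 2 ($sp>p-1$), where the iteration terminates after finitely many steps and one must verify, from the explicit formula for $\gamma_n$, that a sufficiently large choice of $\beta_0$ forces the last admissible differentiability $\gamma_{n_\star}$ to exceed $\delta+N/\beta_{n_\star}$; this is a purely arithmetic check. Apart from this, the iteration is mechanical and the proof is otherwise a straightforward adaptation of Theorem~\ref{teo:1}.
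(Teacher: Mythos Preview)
Your approach is correct and shares the overall architecture of the paper's proof (normalize, iterate Proposition~\ref{prop:improve2}, close via the Besov-to-H\"older chain). The genuine difference is in Case~2 ($sp>p-1$). The paper seeds at $\beta_0=p$, $\vartheta_0=s-1/p$ and, once the differentiability $\gamma_i=(1+\vartheta_i\beta_i)/\beta_i$ first reaches $1$, the integrability $\beta_{i_\infty}$ may still be too small for the Morrey embedding; it therefore runs a \emph{second} iteration in which the differentiability is frozen at $\gamma=1-\varepsilon$ (via $\widetilde\vartheta_i=\gamma-1/\beta_i$) while only $\beta_i$ grows. You instead use Theorem~\ref{teo:1} to seed at an arbitrarily large $\beta_0$: since the threshold index satisfies $n_\star\sim\beta_0(1-s_0)/(sp-p+1)$ and the increment $\gamma_{n+1}-\gamma_n$ is $O(1/\beta_0)$, the last admissible $\gamma_{n_\star}$ lands within $O(1/\beta_0)$ of $1$ while $\beta_{n_\star}$ is already comparable to $\beta_0$; hence a single phase suffices to arrange $\gamma_{n_\star}-N/\beta_{n_\star}>\delta$. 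This trades the paper's two-stage scheme for a heavier use of the preliminary $C^{s_0}$ estimate; both yield constants depending only on $N,s,p,\delta$. One minor point: to justify the seed bound you need the $C^{s_0}$ estimate on $B_{7/8+2h_0}$ rather than $B_{7/8}$, which is still available from Remark~\ref{oss:flexibility}.
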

\begin{proof}By the same scaling argument as in the proof of Theorem~\ref{teo:1}, it is enough to prove that
$$
[u]_{C^\delta(B_{1/2})}\leq C(N,p,s,\delta),
$$
under the assumption that $u$ satisfies \eqref{assumption}. For $i\in\mathbb{N}$, we define the sequences of exponents
$$
\beta_0=p,\qquad\qquad \beta_{i+1}=\beta_i+p-1=p+i\,(p-1),
$$
and
$$
\vartheta_0=s-\frac{1}{p},\qquad \vartheta_{i+1}=\frac{\vartheta_i\,\beta_i+s\,p}{\beta_{i+1}}=\vartheta_i\,\frac{p+i\,(p-1)}{p+(i+1)(p-1)}+\frac{s\,p}{p+(i+1)(p-1)}.
$$
By induction, we see that $\{\vartheta_i\}_{i\in\mathbb{N}}$ is explictely given by the increasing sequence
$$
 \vartheta_i =\left(s-\frac{1}{p}\right)\,\frac{p}{p+i\,(p-1)}+\frac{s\,p\,i}{p+i\,(p-1)},
$$
and thus
$$
\lim_{i\to\infty} \vartheta_i = \frac{s\,p}{p-1}.
$$
The proof is now split into two different cases.
\vskip.2cm\noindent
{\bf  Case 1: $s\,p\leq (p-1)$.} 
Fix $0<\delta<s\,p/(p-1)$ and choose $i_\infty\in\mathbb{N}\setminus\{0\}$ such that
\[
\delta<\frac{1}{\beta_{i_\infty}}+\vartheta_{i_\infty}-\frac{N}{\beta_{i_\infty}}.
\]
This is clearly possible since 
\[
\lim_{i\to\infty} \beta_i=\infty,\qquad \lim_{i\to\infty}\vartheta_i=\frac{s\,p}{p-1}\qquad \mbox{ and } \qquad \delta< \frac{s\,p}{p-1}.
\]
Define also as in the proof of Theorem \ref{teo:1}
$$
h_0=\frac{1}{64\,i_\infty},\qquad R_i=\frac{7}{8}-4\,(2\,i+1)\,h_0=\frac{7}{8}-\frac{2\,i+1}{16\,i_\infty},\qquad \mbox{ for } i=0,\dots,i_\infty.
$$
We note that 
\[
R_0+4\,h_0=\frac{7}{8}\qquad \mbox{ and }\qquad R_{i_\infty-1}-4\,h_0=\frac{3}{4}.
\] 
By applying Proposition\footnote{Note that in this case we will always have $1+\vartheta_i\beta_i<\beta_i$, so that the proposition applies.} \ref{prop:improve2} with
\[
R=R_i, \qquad \vartheta=\vartheta_i \qquad \mbox{ and }\qquad \beta=\beta_i,\quad \mbox{ for } i=0,\ldots,i_\infty-1,
\] 
and observing that $R_i-4\,h_0=R_{i+1}+4\,h_0$ and that by construction
$$
\frac{1+s\,p+\vartheta_i\,\beta_i}{\beta_i+(p-1)}=\frac{1+\vartheta_{i+1}\,\beta_{i+1}}{\beta_{i+1}},
$$
we obtain the iterative scheme of estimate
\[
\left\{\begin{array}{rcll}
\sup\limits_{0<|h|< h_0}\left\|\dfrac{\delta^2_h u}{|h|^{\frac{1+\vartheta_1\beta_1}{\beta_1}}}\right\|_{L^{\beta_1}(B_{R_1+4h_0})}&\leq& C\,\sup\limits_{0<|h|< h_0}\left(\left\|\dfrac{\delta^2_h u }{|h|^s}\right\|_{L^{p}(B_{7/8})}+1\right)\\
&&&\\
\sup\limits_{0<|h|< h_0}\left\|\dfrac{\delta^2_h u}{|h|^{\frac{1+\vartheta_{i+1}\beta_{i+1}}{\beta_{i+1}}}}\right\|_{L^{\beta_{i+1}}(B_{R_{i+1}+4h_0})}&\leq& C\,\sup\limits_{0<|h|< h_0}\left(\left\|\dfrac{\delta^2_h u }{|h|^{\frac{1+\vartheta_{i}\beta_{i}}{\beta_{i}}}}\right\|_{L^{\beta_i}(B_{R_i+4\,h_0})}+1\right),&\mbox{ for } i=1,\ldots,i_\infty-2,
\end{array}
\right.
\]
and finally
$$
\sup_{0<|h|< {h_0}}\left\|\frac{\delta^2_h u}{|h|^{\frac{1}{\beta_{i_\infty}}+\vartheta_{i_\infty}}}\right\|_{L^{\beta_{i_\infty}}(B_{3/4})}\leq C\sup_{0<|h|< h_0}\left(\left\|\frac{\delta^2_h u }{|h|^{\frac{1+\vartheta_{i_\infty-1}\beta_{i_\infty-1}}{\beta_{i_\infty-1}}}}\right\|_{L^{\beta_{i_\infty-1}}(B_{R_{i_\infty-1}+4\,h_0})}+1\right).
$$
Here $C=C(N,p,s,\delta)>0$ as always. As in \eqref{eq:1sttofrac} we have
$$
\sup_{0<|h|< h_0}\left\|\frac{\delta^2_h u }{|h|^s}\right\|_{L^{p}(B_{7/8})}\leq C(N,\delta,s,p).\nonumber 
$$
Hence, the previous iterative scheme of inequalities implies
\begin{equation}
\label{differentiability}
\sup_{0<|h|< {h_0}}\left\|\frac{\delta^2_h u}{|h|^{\frac{1}{\beta_{i_\infty}}+\vartheta_{i_\infty}}}\right\|_{L^{\beta_{i_\infty}}(B_{3/4})}\leq C(N,\delta,p,s).
\end{equation}
From here we may repeat the arguments at the end of the proof of Theorem~\ref{teo:1} (see \eqref{Neq} and \eqref{Neq2}) and use Theorem~\ref{thm:Nholder} with $\beta = 1/\beta_{i_\infty}+\vartheta_{i_\infty}$, $q=\beta_{i_\infty}$ and $\alpha =\delta$ to obtain 
$$
[u]_{C^\delta(B_{1/2})}\leq C(N,\delta,p,s).
$$
This concludes the proof in this case.
\vskip.2cm\noindent
{\bf Case 2: $s\,p> (p-1)$.} Fix $0<\delta<1$. Let $i_\infty\in\mathbb{N}\setminus\{0\}$ be such that 
$$
\frac{1+\vartheta_{i_\infty-1}\,\beta_{i_\infty-1}}{\beta_{i_\infty-1}}< 1\qquad \mbox{ and }\qquad \frac{1+\vartheta_{i_\infty}\,\beta_{i_\infty}}{\beta_{i_\infty}}\ge 1.
$$
Observe that such a choice is feasible, since 
\[
\lim_{i\to\infty} \frac{1+\vartheta_i\,\beta_i}{\beta_i}=\frac{s\,p}{p-1}>1.
\] 
Now choose $j_\infty$ so that 
$$
\delta<1-\frac{N}{i_\infty+j_\infty},
$$
and let 
\[
\gamma=1-\varepsilon,\qquad \mbox{ for some } 0<\varepsilon<1 \mbox{ such that } \delta<1-\varepsilon-\frac{N}{i_\infty+j_\infty}.
\] 
Define also 
$$
h_0=\frac{1}{64\,(i_\infty+j_\infty)},\qquad R_i=\frac{7}{8}-4\,(2\,i+1)\,h_0=\frac{7}{8}-\frac{2\,i+1}{16\,(i_\infty+j_\infty)},\qquad \mbox{ for } i=0,\dots,i_\infty+j_\infty.
$$
We note that 
\[
R_0+4\,h_0=\frac{7}{8}\qquad \mbox{ and }\qquad R_{(i_\infty+j_\infty)-1}-4\,h_0=\frac{3}{4}.
\] 
By applying Proposition\footnote{Note that for $i\leq i_\infty$ we have $1+\vartheta_i\,\beta_i<\beta_i$, so that the proposition applies.} \ref{prop:improve2} with
\[
R=R_i, \qquad \vartheta=\vartheta_i \qquad \mbox{ and }\qquad \beta=\beta_i,\quad \mbox{ for } i=0,\ldots,i_\infty-1,
\] 
and observing that $R_i-4\,h_0=R_{i+1}+4\,h_0$ and that
$$
\frac{1+s\,p+\vartheta_i\,\beta_i}{\beta_i+(p-1)}=\frac{1+\vartheta_{i+1}\,\beta_{i+1}}{\beta_{i+1}},
$$
we arrive as in {\bf Case 1} at
$$
\sup_{0<|h|< {h_0}}\left\|\frac{\delta^2_h u}{|h|^{\gamma}}\right\|_{L^{\beta_{i_\infty}}(B_{R_{i_\infty}+4h_0})}\leq \sup_{0<|h|< {h_0}}\left\|\frac{\delta^2_h u}{|h|^{\frac{1}{\beta_{i_\infty}}+\vartheta_{i_\infty}}}\right\|_{L^{\beta_{i_\infty}}(B_{R_{i_\infty}+4h_0})}\leq C(N,\delta,p,s),
$$
where we used that $\gamma<1\le 1/\beta_{i_\infty}+\vartheta_{i_\infty}$. We now apply Proposition \ref{prop:improve2} with
\[
R=R_i, \qquad \beta=\beta_i\qquad \mbox{ and }\qquad \vartheta=\widetilde \vartheta_i=\gamma-\frac{1}{\beta_i} \qquad  \mbox{ for } i=i_\infty,\ldots,i_\infty+j_\infty-1.
\] 
Observe that by construction we have
$$
\frac{1+\widetilde \vartheta_i\,\beta_i}{\beta_i}=\gamma,\qquad \mbox{ for } i=i_\infty,\ldots,i_\infty+j_\infty-1,
$$
and using that $s\,p>(p-1)$
$$
\frac{1+s\,p+\widetilde \vartheta_i\, \beta_i}{\beta_i+p-1}>\frac{p+\widetilde \vartheta_i\, \beta_i}{\beta_i+p-1}=1+\frac{\beta_i\,(\gamma-1)}{\beta_i+p-1}>\gamma,\quad \mbox{ for } i=i_\infty,\ldots,i_\infty+j_\infty-1.
$$
We obtain the following chain of estimate
$$
\sup\limits_{|h|\leq h_0}\left\|\dfrac{\delta^2_h u}{|h|^{\gamma}}\right\|_{L^{\beta_{i+1}}(B_{R_{i+1}+4h_0})}\leq  C\,\sup\limits_{0<|h|< h_0}\left(\left\|\dfrac{\delta^2_h u }{|h|^{\gamma}}\right\|_{L^{\beta_i}(B_{R_i+4h_0})}+1\right),\quad \mbox{ for } i=i_\infty,\ldots,i_\infty+j_\infty-2,
$$
and finally
$$
\sup_{0<|h|< {h_0}}\left\|\frac{\delta^2_h u}{|h|^{\gamma}}\right\|_{L^{\beta_{i_\infty+j_\infty}}(B_{3/4})}\leq C\sup_{0<|h|< h_0}\left(\left\|\frac{\delta^2_h u }{|h|^{\gamma}}\right\|_{L^{\beta_{i_\infty+j_\infty-1}}(B_{R_{i_\infty+j_\infty-1}+4h_0})}+1\right).
$$
Hence, recalling that $\gamma=1-\varepsilon$, we conclude
$$
\sup_{0<|h|< {h_0}}\left\|\frac{\delta^2_h u}{|h|^{1-\varepsilon}}\right\|_{L^{\beta_{i_\infty+j_\infty}}(B_{3/4})}\leq C(N,\delta,p,s).
$$
We are again in the position to repeat the arguments at the end of the proof of Theorem \ref{teo:1} (see \eqref{Neq} and \eqref{Neq2}) and use Theorem~\ref{thm:Nholder} with $\beta = 1-\varepsilon$, $q=\beta_{i_\infty+j_\infty}$ and $\alpha =\delta$ to obtain 
$$
[u]_{C^\delta(B_{1/2})}\leq C(N,\delta,p,s).
$$
This concludes the proof.
\end{proof}
\begin{oss}
\label{oss:flexibility2}
Under the assumptions of the previous theorem, by the same arguments as in Remark \ref{oss:flexibility} we may deduce the more flexible estimate that for every $0<\sigma<1$
\[
[u]_{C^\delta(B_{\sigma R}(x_0))}\leq \frac{C}{R^\delta}\,\left(\|u\|_{L^\infty(B_{R}(x_0))}+R^{s-\frac{N}{p}}\,[u]_{W^{s,p}(B_{R}(x_0))}+\mathrm{Tail}_{p-1,s\,p}(u;x_0,R)\right),
\]
with $C$ now depending on $\sigma$ as well (and blowing-up as $\sigma\nearrow 1$)
\end{oss}
\section{Improved H\"older regularity: the non-homogeneous case}\label{sec:last}
In this section we finally prove Theorem~\ref{mainthm} in its full generality. We already know from Theorem~\ref{teo:holderf} that $u$ is locally in $C^\beta$ {\it for some} $\beta$. We will show that the H\"older exponent can be enhanced up to the desired one, by means of a blow-up argument inspired by \cite{CS}.
\vskip.2cm
We start with the following
\begin{lm}[Stability in $L^\infty$]
\label{lm:stab}
Assume $p\ge 2$ and $0<s<1$. Let $\Omega\subset\mathbb{R}^N$ be an open and bounded set. For $f\in L^q_{\rm loc}(\Omega)$ with 
\[
\left\{\begin{array}{lr}
q>\dfrac{N}{s\,p},& \mbox{ if } s\,p\le N,\\
&\\
q\ge 1,& \mbox{ if }s\,p>N,
\end{array}
\right.
\]
we consider a local weak solution $u\in W^{s,p}_{\rm loc}(\Omega)\cap L^{p-1}_{s\,p}(\mathbb{R}^N)$ of the equation
\[
(-\Delta_p)^s u=f,\qquad \mbox{ in }\Omega.
\]
Let $B_{2R}(x_0)\Subset\Omega$ and assume that
$$
\|u\|_{L^\infty(B_R(x_0))}+\int_{\mathbb{R}^N\setminus B_R(x_0)} \frac{|u(x)|^{p-1}}{|x-x_0|^{N+s\,p}}\, dx\leq M\qquad \mbox{ and }\qquad \|f\|_{L^q(B_R(x_0))}\leq \eta.
$$
Suppose that $h\in X^{s,p}_u(B_R(x_0),B_{2R}(x_0))$ weakly solves
$$
\left\{\begin{array}{rcll}
(-\Delta_p)^s h &=& 0,& \mbox{ in }B_R(x_0),\\
h&=&u, &\mbox{ in }\mathbb{R}^N\setminus B_R(x_0).
\end{array}
\right.
$$
Then for any $0<\sigma <1$, there is $\tau_{M,R,\sigma}(\eta)$ such that 
\begin{equation}
\label{stable}
\|u-h\|_{L^\infty(B_{\sigma\,R}(x_0))}\leq \tau_{M,R,\sigma}(\eta)
\end{equation}
and $\tau_{M,R,\sigma}(\eta)$ converges to $0$ as $\eta$ goes to $0$.
\end{lm}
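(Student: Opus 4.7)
The plan is to combine the energy estimate in Lemma~\ref{lm:1} for $u - h$ with uniform H\"older bounds on $u$ and $h$, and to conclude via a standard $L^p$-versus-$C^\gamma$ interpolation.

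First, the hypothesis $q > N/(s\,p)$ for $s\,p \leq N$ (and $q \geq 1$ for $s\,p > N$) implies $q \geq (p_s^*)'$ in all cases (when $s\,p < N$ this reduces to $N \geq s\,p$), so Lemma~\ref{lm:1} applies and yields
$$
[u - h]_{W^{s,p}(\mathbb{R}^N)}^{p} \leq C(N,s,p,q) \, R^{\gamma_1} \, \|f\|_{L^q(B_R(x_0))}^{p'},
$$
for some exponent $\gamma_1$ depending only on $N,s,p,q$. Since $u - h$ vanishes outside $B_R(x_0)$, Proposition~\ref{prop:sobpoin1} gives the Poincar\'e-type bound
$$
\|u - h\|_{L^p(B_R(x_0))}^{p} \leq C(N, R, s, p, q) \, \eta^{p'}.
$$

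Second, both $u$ and $h$ admit a uniform H\"older estimate on $B_{\sigma' R}(x_0)$ for any fixed $\sigma < \sigma' < 1$: Theorem~\ref{teo:holderf} for $u$, with constant depending only on $M$, $R$, $\sigma'$ and an a priori bound on $\eta$ (one may harmlessly assume $\eta \leq 1$); and Theorem~\ref{teo:DKP} for $h$, applied via a finite covering of $\overline{B_{\sigma' R}(x_0)}$ by balls of fractional radius contained in $B_R(x_0)$. The covering argument uses that $h$ inherits the tail bound from $u$ (being equal to $u$ outside $B_R(x_0)$) and a uniform $L^p(B_R(x_0))$ bound (which follows from the $L^\infty$ bound on $u$ together with the first step applied with $\eta \leq 1$). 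Consequently $u - h$ is uniformly bounded in $C^\gamma(\overline{B_{\sigma' R}(x_0)})$ by some constant $K = K(M, R, \sigma', N, s, p, q)$.

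Finally, I would combine these two facts. Let $t := \|u - h\|_{L^\infty(B_{\sigma R}(x_0))}$ and pick $y \in \overline{B_{\sigma R}(x_0)}$ where $|u-h|$ (nearly) attains the value $t$. By the uniform $C^\gamma$ bound, $|u - h| \geq t/2$ on $B_{r_t}(y)$ with $r_t := (t/(2K))^{1/\gamma}$; for $t$ sufficiently small, $B_{r_t}(y) \subset B_{\sigma' R}(x_0) \subset B_R(x_0)$, whence
$$
\|u - h\|_{L^p(B_R(x_0))}^{p} \geq c_N \, (t/2)^p \, r_t^N.
$$
Combining with the first step gives $t^{p + N/\gamma} \leq C \, \eta^{p'}$, so $t \to 0$ as $\eta \to 0$ at an explicit power rate. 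This furnishes the required modulus $\tau_{M,R,\sigma}(\eta)$. The most delicate step is the uniform H\"older control of $h$ up to a ball of radius arbitrarily close to $R$, which is why the covering argument (localizing the interior estimate of Theorem~\ref{teo:DKP}) is necessary.
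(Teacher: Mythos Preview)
Your argument is correct and uses the same two ingredients as the paper: the $L^p$-smallness of $u-h$ coming from Lemma~\ref{lm:1} (together with Poincar\'e), and uniform H\"older bounds on $u$ and $h$ from Theorems~\ref{teo:holderf} and~\ref{teo:DKP}. Where you diverge is in the final step. The paper argues by contradiction: assuming a sequence with $\|f_n\|_{L^q}\to 0$ but $\|u_n-h_n\|_{L^\infty(B_\sigma)}\not\to 0$, the uniform $C^\beta$ bounds give precompactness in $C^0$ via Ascoli--Arzel\`a, and the energy estimate forces any uniform limit of $u_n-h_n$ to vanish, a contradiction. You instead use a direct $L^p$--$C^\gamma$ interpolation to extract an explicit power rate $\tau(\eta)\lesssim \eta^{p'\gamma/(p\gamma+N)}$. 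Your route is more quantitative and avoids compactness entirely; the paper's route is a line or two shorter and sidesteps the minor technicality of ensuring the test ball $B_{r_t}(y)$ remains inside $B_{\sigma' R}(x_0)$ (which you handle correctly by restricting to small $t$; for large $t$ one simply uses $\rho=\min\{r_t,(\sigma'-\sigma)R\}$ in place of $r_t$).
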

\begin{proof}
The existence of a bound of the form \eqref{stable} easily follows from the triangle inequality and the local $L^\infty$ estimate of Theorem~\ref{teo:loc_bound}. Let us prove that $\eta\mapsto\tau_{M,R,\sigma}(\eta)$ is infinitesimal.
\par
For simplicity, we assume $R=1$ and $x_0=0$. We argue by contradiction and assume that there exist two sequences $\{f_n\}_{n\in\mathbb{N}}\subset L^{q}(B_1)$ and $\{u_n\}_{n\in\mathbb{N}}$ such that
\[
\|u_n\|_{L^\infty(B_1)}+\int_{\mathbb{R}^N\setminus B_1} \frac{|u_n|^{p-1}}{|x|^{N+s\,p}}\, dx\leq M,\qquad \|f_n\|_{L^q(B_1)}\to 0,
\]
but
\[
\liminf_{n\to\infty} \|u_n-h_n\|_{L^\infty(B_{\sigma})}>0.
\]
We observe that by \eqref{prima} we have
\begin{equation}
\label{azzero}
\lim_{n\to\infty}[u_n-h_n]^p_{W^{s,p}(\mathbb{R}^N)}\le C\,\lim_{n\to\infty}\left(\int_{B_1} |f_n|^{q}\,dx\right)^\frac{p'}{q}=0.
\end{equation}
Moreover, by Theorem~\ref{teo:DKP} and Theorem~\ref{teo:holderf} we have that both sequences $\{u_n\}_{n\in\mathbb{N}}$ and $\{h_n\}_{n\in\mathbb{N}}$ have uniformly bounded $C^{0,\beta}$ seminorms in $B_{\sigma}$. They are also uniformly bounded in $L^\infty(B_{\sigma})$, thanks to Theorem~\ref{teo:loc_bound}. By using the Ascoli-Arzel\`a theorem, we may conclude that $u_n-h_n$ converges uniformly in $\overline{B_{\sigma}}$, up to a subsequence. By \eqref{azzero} we get that
\[
\lim_{n\to\infty} \|u_n-h_n\|_{L^\infty(B_{\sigma})}=0,
\]
which gives the desired contradiction.
\end{proof}
The next result is the crucial gateway to improving the H\"older exponent found in Theorem~\ref{teo:holderf}, provided $f$ is sufficiently summable. For simplicity, we state it ``at scale $1$'', then we will show how to use it to get the general result.
\begin{prop} 
\label{prop:caffsilv}
Let $p\ge 2$, $0<s<1$ and $q$ be such that
\[
\left\{\begin{array}{lr}
q>\dfrac{N}{s\,p},& \mbox{ if } s\,p\le N,\\
&\\
q\ge 1,& \mbox{ if }s\,p>N,
\end{array}
\right.
\]
We consider $\Theta=\Theta(N,s,p,q)$ the exponent defined in \eqref{theta}. 
For every $0<\varepsilon<\Theta$ there exists $\eta>0$ such that if $f\in L^q_{\rm loc}(B_4)$ and
\[
\|f\|_{L^q(B_1)}\leq \eta,
\] 
then every local weak solution $u\in W^{s,p}_{\rm loc}(B_4)\cap L^{p-1}_{s\,p}(\mathbb{R}^N)$ of the equation
\[
(-\Delta_p)^s u=f,\qquad \mbox{ in }B_4,
\]
that satisifes
\begin{equation}
\label{startup}
\|u\|_{L^\infty(B_1)}\leq 1,\qquad \int_{\mathbb{R}^N\setminus B_1}\frac{|u|^{p-1}}{|x|^{N+s\,p}}\, dx\leq 1.
\end{equation}
belongs to $C^{\Theta-\varepsilon}(\overline{B_{1/8}})$.
\end{prop}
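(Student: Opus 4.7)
The strategy is a Caffarelli--Silvestre type improvement-of-flatness iteration. Fix $\varepsilon \in (0,\Theta)$ and choose an auxiliary exponent $\delta$ with $\Theta - \varepsilon < \delta < \Theta(s,p) = \min\{sp/(p-1),1\}$, which is possible since $\Theta \le \Theta(s,p)$. For every $x_0 \in B_{1/8}$ I aim to construct, with parameters $\lambda \in (0,1/8)$ and $\eta > 0$ chosen uniformly in $x_0$, a sequence of centering constants $\{c_k\}_{k \ge 0}$ with $c_0=0$ such that
\[
\|u - c_k\|_{L^\infty(B_{\lambda^k}(x_0))} \le \lambda^{k(\Theta-\varepsilon)}, \qquad |c_{k+1}-c_k| \le C\,\lambda^{k(\Theta-\varepsilon)}.
\]
The first bound is a discrete $(\Theta-\varepsilon)$-H\"older control at $x_0$, the second guarantees $c_k \to u(x_0)$. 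Since the hypotheses on $(u,f)$ transfer to any sub-ball around $x_0 \in B_{1/8}$ (via Lemma~\ref{lm:tails}) with constants controlled by the given ones, these parameters will be uniform in $x_0$, yielding $u \in C^{\Theta-\varepsilon}(\overline{B_{1/8}})$.

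The inductive step is driven by the rescaled function
\[
v_k(x) := \lambda^{-k(\Theta-\varepsilon)}\bigl(u(x_0+\lambda^k x)-c_k\bigr),
\]
which satisfies $\|v_k\|_{L^\infty(B_1)} \le 1$ and, by the scaling properties of $(-\Delta_p)^s$, solves $(-\Delta_p)^s v_k = f_k$ in a large ball with
\[
\|f_k\|_{L^q(B_1)} \le \lambda^{k[sp-(\Theta-\varepsilon)(p-1)-N/q]}\,\|f\|_{L^q(B_{\lambda^k}(x_0))} \le \eta,
\]
the exponent being non-negative because $(\Theta-\varepsilon)(p-1) < sp - N/q$ (a direct consequence of $\Theta-\varepsilon<\Theta$ and the definition of $\Theta$). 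The tail of $v_k$ is controlled by decomposing $\mathbb{R}^N \setminus B_1$ into dyadic annuli that pull back to $B_{\lambda^j}(x_0) \setminus B_{\lambda^{j+1}}(x_0)$ for $j<k$ plus the outer region $\{|y-x_0|>1\}$, and applying the inductive $L^\infty$ bound on each annulus: the resulting geometric series converges thanks to $sp - (\Theta-\varepsilon)(p-1) > 0$, giving $\mathrm{Tail}(v_k;0,1)^{p-1} \le M$ for a constant $M$ independent of $k$.

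With the tail bound at hand, let $h_k$ be the $(s,p)$-harmonic replacement of $v_k$ on $B_{1/2}$. Theorem~\ref{teo:1higher} (in the form of Remark~\ref{oss:flexibility2}) yields $[h_k]_{C^\delta(B_{1/4})} + \|h_k\|_{L^\infty(B_{1/4})} \le C_0$, where $C_0$ depends only on $M$ and structural parameters, while Lemma~\ref{lm:stab} gives $\|v_k-h_k\|_{L^\infty(B_{1/4})} \le \tau(\eta)$ with $\tau(\eta) \to 0$ as $\eta \to 0$. Setting $c_{k+1} := c_k + \lambda^{k(\Theta-\varepsilon)} h_k(0)$, we obtain $|c_{k+1}-c_k| \le C_0\,\lambda^{k(\Theta-\varepsilon)}$ and, for every $x\in B_\lambda$,
\[
|v_k(x) - h_k(0)| \le \tau(\eta) + C_0\,\lambda^\delta.
\]
First choose $\lambda$ small enough that $C_0\,\lambda^{\delta-(\Theta-\varepsilon)} \le 1/2$ (feasible since $\delta > \Theta-\varepsilon$); then choose $\eta$ small enough that $\tau(\eta) \le \lambda^{\Theta-\varepsilon}/2$. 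Rescaling back produces the improved flatness at step $k+1$, closing the induction.

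\textbf{Main obstacle.} The delicate point is the coupled selection of the parameters $\lambda$, $M$, $C_0$, $\eta$: the tail constant $M$ depends on $\lambda$ through the annular summation, and through $M$ the harmonic-replacement H\"older constant $C_0$ depends implicitly on $\lambda$ as well. Making the improvement-of-flatness estimate $C_0\,\lambda^{\delta-(\Theta-\varepsilon)} \le 1/2$ attainable requires exploiting quantitatively the strict inequality $\delta < \Theta(s,p)$ — i.e.~the fact that the target exponent $\Theta-\varepsilon$ leaves genuine room below the homogeneous threshold $\Theta(s,p)$. A secondary, but routine, difficulty is ensuring that the iteration constants are uniform in $x_0 \in B_{1/8}$; this is handled by translating the structural hypotheses to balls $B_r(x_0)\subset B_{1/2}$ via Lemma~\ref{lm:tails} and by a standard interpolation across dyadic scales converting the pointwise $(\Theta-\varepsilon)$-H\"older bounds at each base point into a uniform $C^{\Theta-\varepsilon}(\overline{B_{1/8}})$ estimate.
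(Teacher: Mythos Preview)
Your approach is essentially the same blow-up/improvement-of-flatness argument as the paper's; the use of centering constants $c_k$ rather than normalizing to $u(x_0)=0$, and a generic intermediate exponent $\delta\in(\Theta-\varepsilon,\Theta(s,p))$ in place of the paper's specific choice $\Theta-\varepsilon/2$, are cosmetic differences.

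The one substantive gap is the tail bookkeeping. You propose to \emph{derive} $\mathrm{Tail}_{p-1,sp}(v_k;0,1)^{p-1}\le M$ at each step from the accumulated inductive $L^\infty$ bounds via an annular decomposition, and you correctly flag the resulting circularity ($M$ depends on $C_0$ through the constants $c_j$, while $C_0$ depends on $M$) as the main obstacle. However, your proposed resolution---``exploiting quantitatively the strict inequality $\delta<\Theta(s,p)$''---is not the right lever here. The paper breaks the circularity more simply: it \emph{carries the tail bound as part of the induction hypothesis}, alongside the $L^\infty$ bound. Concretely, the inductive statement is both $\|v_k\|_{L^\infty(B_1)}\le 1$ and $\mathrm{Tail}_{p-1,sp}(v_k;0,1)^{p-1}\le 1$. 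Then $C_0$ is a universal constant (it depends only on the fixed hypothesis, not on $\lambda$), $\lambda$ is chosen small depending on $C_0$, $\eta$ is chosen depending on $\lambda$, and one \emph{verifies} that both the $L^\infty$ and tail bounds are reproduced at step $k+1$. For that verification you must use the full H\"older estimate $|v_k(x)-h_k(0)|\le \tau(\eta)+C_0|x|^\delta$ on the whole ball where it holds (not just on $B_\lambda$): after rescaling, the $|x|^\delta$ decay with $\delta(p-1)<sp$ makes the near-field contribution to $\mathrm{Tail}(v_{k+1};0,1)^{p-1}$ of order $C_0^{p-1}\lambda^{(\delta-(\Theta-\varepsilon))(p-1)}$, hence small in $\lambda$; the far-field contributions are $O(\lambda^{sp-(\Theta-\varepsilon)(p-1)})$ by the previous step's tail hypothesis. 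With this ordering the parameter selection is linear rather than circular: fixed tail hypothesis $\Rightarrow C_0 \Rightarrow \lambda \Rightarrow \eta \Rightarrow$ close both parts of the induction.
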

\begin{proof} 
We divide the proof in two parts, for ease of readability.
\vskip.2cm\noindent
{\bf Part 1: Regularity at the origin}. Here we prove that for every $0<\varepsilon<\Theta$ and every $0<r<1/2$, there exists $\eta$ and a constant $C=C(N,s,p,\varepsilon)>0$ such that if $f$ and $u$ are as above, then we have
\[
\sup_{x\in B_r} |u(x)-u(0)|\leq C\,r^{\Theta-\varepsilon}.
\]
We may assume $u(0)=0$ without loss of generality. We fix $0<\varepsilon<\Theta$ and observe that it is sufficient to prove that there exist $\lambda<1/2$ and $\eta>0$ such that if $f$ and $u$ are as above, then
\begin{equation}
\label{eq:keq}
\sup_{B_{\lambda^k}}|u|\leq \lambda^{k\,(\Theta-\varepsilon)},\qquad \int_{\mathbb{R}^N\setminus B_1}\left|\frac{u(\lambda^k\, x)}{\lambda^{k\,(\Theta-\varepsilon)}}\right|^{p-1}\,|x|^{-N-s\,p}\, dx\leq 1,\qquad \mbox{ for every }k\in\mathbb{N}.
\end{equation}
Indeed, assume this were true. Then for every $0<r<1/2$, there would exist $k\in \mathbb{N}$ such that $\lambda^{k+1}< r\le \lambda^k$. From the first property in \eqref{eq:keq}, we would get
\[
\sup_{B_r} |u|\le \sup_{B_{\lambda^k}} |u|\le \lambda^{k\,(\Theta-\varepsilon)}=\frac{1}{\lambda^{\Theta-\varepsilon}}\,\lambda^{(k+1)\,(\Theta-\varepsilon)}\le C\,r^{\Theta-\varepsilon},
\]
as desired.
\par
In order to prove \eqref{eq:keq} we proceed by induction.
For $k=0$ this holds true by the assumptions in \eqref{startup}. 
Suppose \eqref{eq:keq} holds up to $k$, we now show that it also holds for $k+1$, provided that 
\[
\|f\|_{L^q(B_1)}\le \eta,
\]
with $\eta$ small enough, but independent of $k$.
Define
$$
w_k=\frac{u(\lambda^k x)}{\lambda^{k\,(\Theta-\varepsilon)}}.
$$
By the hypotheses
$$
\|w_k\|_{L^\infty(B_1)}\leq 1 \qquad \mbox{ and } \qquad \int_{\mathbb{R}^N\setminus B_1}\frac{|w_k|^{p-1}}{|x|^{N+s\,p}}\,dx\leq 1,
$$
Moreover
$$
(-\Delta_p)^s w_k(x) = \lambda^{k\,[s\,p\,-(\Theta-\varepsilon)(p-1)]}\,f(\lambda^k\, x)=:f_k(x), 
$$
so that  
$$\
\|f_k\|_{L^{q}(B_1)}=\lambda^{k(s\,p\,-(\Theta-\varepsilon)(p-1))}\lambda^{-\frac{N}{q}\,k}\,\left(\int_{B_{\lambda^k}}|f|^{q}\,dx\right)^{\frac{1}{q}}\leq \|f\|_{L^{q}(B_1)}\le \eta.
$$
Here we used the hypotheses on $f$ and the definition of $\Theta$, and again the fact that $\lambda<1/2$. 
Take $h_k\in X^{s,p}_{w_k}(B_1,B_2)$ to be the weak solution of 
$$
\left\{\begin{array}{rcll}
(-\Delta_p)^s h &=& 0,& \mbox{ in } B_1,\\
h&=&w_k,& \mbox{ in } \mathbb{R}^N\setminus B_1.
\end{array}
\right.
$$
By the stability estimate of Lemma \ref{lm:stab}, we have 
\[
\|w_k-h_k\|_{L^\infty(B_{3/4})}<\tau(\eta),
\]
with $\tau$ independent of $k$.
By using this, we then have the following estimate
\begin{equation}
\label{estimata}
\begin{split}
|w_k(x)|&\leq |w_k(x)-h_k(x)|+|h_k(x)-h_k(0)|+|h_k(0)-w_k(0)|\\
&\leq 2\,\tau(\eta)+[h_k]_{C^{\Theta-\varepsilon/2}(B_{1/2})}\,|x|^{\Theta-\varepsilon/2},\qquad\qquad \mbox{ for }x\in B_{1/2}.\\
\end{split}
\end{equation}
We also used that $h_k$ is $C^{\Theta-\varepsilon/2}(\overline{B_{1/2}})$ thanks to Theorem~\ref{teo:1higher}, which comes with the estimate
\[
[h_k]_{C^{\Theta-\varepsilon/2}(B_{1/2})}\le C\, \left(\|h_k\|_{L^\infty(B_{2/3})}+[h_k]_{W^{s,p}(B_{2/3})}+\mathrm{Tail}_{p-1,s\,p}(h_k;0,{2/3})\right),
\]
see Remark \ref{oss:flexibility2}.
We observe that the norms on the right-hand side are uniformly bounded, independently of $k$. Indeed, by the triangle inequality, Lemma \ref{lm:stab} and the hypothesis on $w_k$ we have
\[
\|h_k\|_{L^\infty(B_{2/3})}\le \|h_k\|_{L^\infty(B_{3/4})} \le \|h_k-w_k\|_{L^\infty(B_{3/4})}+\|w_k\|_{L^\infty(B_{3/4})}\le \tau(\eta)+1.
\]
As for the tail term, by using Lemma \ref{lm:tails}, the triangle inequality, the hypothesis on $w_k$ and \eqref{seconda} 
\[
\begin{split}
\mathrm{Tail}_{p-1,s\,p}(h_k;0,3/4)
%&\le C\,\mathrm{Tail}_{p-1,s\,p}(h_k;0,1)+C\,\|h_k\|_{L^{p-1}(B_1)}\\
&\le C\,\mathrm{Tail}_{p-1,s\,p}(w_k;0,3/4)+C\,\left(\int_{\mathbb{R}^N\setminus B_{3/4}} \frac{|h_k(y)-w_k(y)|^{p-1}}{|y|^{N+s\,p}}\,dy\right)^\frac{1}{p-1}\\
&\le C\,\mathrm{Tail}_{p-1,s\,p}(w_k;0,1)+C\,\|w_k\|_{L^{p-1}(B_1)}+C\,\|h_k-w_k\|_{L^{p-1}(B_1)}\\
%&+C\,\|h_k-w_k\|_{L^{p-1}(B_1)}+C\,\|w_k\|_{L^{p-1}(B_1)}\\
%&\le C+C\,\left(\int_{B_2\setminus B_1} \frac{|h_k(y)-w_k(y)|^{p-1}}{|y|^{N+s\,p}}\,dy\right)^\frac{1}{p-1}+C\,\|h_k-w_k\|_{L^{p-1}(B_1)}\\
%&\le C+C\,\|h_k-w_k\|_{L^{p-1}(B_2)}\\
&\le 2C+C\,\|f_k\|_{L^{q}(B_1)}\le C\,(2+\eta).
\end{split}
\]
We also used that $h_k=w_k$ outside $B_1$, by construction. Of course, the constant $C$ is universal, i.e. it depends on $N,s$ and $p$ only.
\par
Finally, by using a rescaled version of \cite[Lemma 4]{KKP2}, we can bound the Sobolev seminorm
\[
[h_k]^p_{W^{s,p}(B_{2/3})}\le C\,\left(\|h_k\|^p_{L^\infty(B_{3/4})}+\fint_{B_{3/4}} |h_k|^p\,dx+\mathrm{Tail}_{p-1,s\,p}(h_k,0,3/4)^p\right)\le C.
\]
Thus, we can infer that 
\[
C_1:=\sup_{k\in\mathbb{N}}\, [h_k]_{C^{\Theta-\varepsilon/2}(B_{1/2})}
\] 
is finite and then the estimate of \eqref{estimata} is uniform in $k$.
\par
We now consider the rescaled function
$$
w_{k+1}(x)=\frac{u(\lambda^{k+1}\, x)}{\lambda^{(k+1)\,(\Theta-\varepsilon)}}=\frac{w_k(\lambda\, x)}{\lambda^{\Theta-\varepsilon}}.
$$
By choosing $\eta$ so that $2\,\tau(\eta)<\lambda^\Theta$ and $\lambda$ small enough, we can transfer estimate \eqref{estimata} to $w_{k+1}$. Indeed, we have 
\[
\begin{split}
|w_{k+1}(x)|\leq 2\,\tau(\eta)\,\lambda^{\varepsilon-\Theta}+C_1\,\lambda^{\varepsilon/2}|x|^{\Theta-\varepsilon/2}\leq (1+C_1\,|x|^{\Theta-\varepsilon/2})\,\lambda^{\varepsilon/2},\qquad  x\in B_\frac{1}{2\lambda}.
\end{split}
\]
The previous estimate implies in particular that $\|w_{k+1}\|_{L^\infty(B_1)}\leq 1$ for $\lambda$ satisfying
\begin{equation}
\label{uno}
\lambda<\min\left\{\frac{1}{2},(1+C_1)^{-\frac{2}{\varepsilon}}\right\}.
\end{equation}
This information, rescaled back to $u$, is exactly the first part of \eqref{eq:keq} for $k+1$. As for the second part of \eqref{eq:keq}, the bound above of $|w_{k+1}|$ implies
\begin{equation}
\label{nonlocal1}
\begin{split}
\int_{B_{\frac{1}{2\lambda}}\setminus B_{1}} \frac{|w_{k+1}|^{p-1}}{|x|^{N+s\,p\,}}\,dx&\leq \lambda^{\varepsilon\, (p-1)/2}\int_{B_{\frac{1}{2\lambda}}\setminus B_{1}} \frac{(1+C_1\,|x|^{\Theta-\varepsilon/2})^{p-1}}{|x|^{N+s\,p}}\,dx\\
&\leq (1+C_1)^{p-1}\,\lambda^{\varepsilon\, (p-1)/2}\int_{B_{\frac{1}{2\lambda}}\setminus B_{1}} \frac{1}{|x|^{N+sp+(\varepsilon/2-\Theta)\,(p-1)}}\,dx\\
&\leq \frac{C_2}{s\,p-(\Theta-\varepsilon/2)\,(p-1)}\,\lambda^{\varepsilon\,(p-1)/2}.
\end{split}
\end{equation}
By a change of variables and using that $|w_k|\leq 1$ in $B_1$, we see that also
\begin{equation}
\label{nonlocal2}
\int_{B_{\frac{1}{\lambda}}\setminus B_{\frac{1}{2\lambda}}} \frac{|w_{k+1}|^{p-1}}{|x|^{N+s\,p\,}}\,dx=\lambda^{(\varepsilon-\Theta)\,(p-1)+s\,p}\,\int_{B_1\setminus B_\frac12} \frac{|w_k(x)|^{p-1}}{|x|^{N+s\,p}}\,dx\leq {C_3\,\lambda^{\varepsilon\,(p-1)/2}}.
\end{equation}
In addition, by the integral bound on $w_k$
\begin{equation}
\label{nonlocal3}
\int_{\mathbb{R}^N\setminus B_{\frac{1}{\lambda}}} \frac{|w_{k+1}(x)|^{p-1}}{|x|^{N+s\,p}}\,dx= \lambda^{(\varepsilon-\Theta)\,(p-1)+s\,p}\,\int_{\mathbb{R}^N\setminus B_1}\frac{|w_k(x)|^{p-1}}{|x|^{N+s\,p}}\,dx\leq \lambda^{\varepsilon\, (p-1)/2}.
\end{equation}
In both estimates, we have also used that $\lambda<1/2$ and the fact that
\[
(\varepsilon-\Theta)\,(p-1)+s\,p\ge \varepsilon\,\frac{p-1}{2}.
\]
We observe that the constants $C_2$ and $C_3$ depend on $N,s$ and $p$ only.
From \eqref{nonlocal1}, \eqref{nonlocal2} and \eqref{nonlocal3},
we get that the second part of \eqref{eq:keq} holds, provided that
$$
\left(\frac{C_2}{\varepsilon\,(p-1)}+C_3+1\right)\,\lambda^{\varepsilon\,(p-1)/2}\leq 1.
$$
By taking into account \eqref{uno}, we finally obtain that \eqref{eq:keq} holds true at step $k+1$ as well, provided that $\lambda$ and $\eta$ are chosen so that
\[
\lambda<\min\left\{\frac{1}{2},(1+C_1)^{-\frac{2}{\varepsilon}}, \left(\frac{C_2}{\varepsilon\,(p-1)}+C_3+1\right)^\frac{2}{\varepsilon\,(p-1)}\right\}\qquad \mbox{ and }\qquad \tau(\eta)<\frac{\lambda^\Theta}{2}.
\]
Then the iteration process is complete.
\vskip.2cm\noindent
{\bf Step 2: Regularity in a ball.} We now show how to obtain the desired regularity in the whole ball $B_{1/8}$. We choose $0<\varepsilon<\Theta$ and take the corresponding $\eta$, obtained in {\bf Step 1}. Let $z_0\in B_{1/2}$, we set $L=2^{N+1}\,(1+|B_1|)$ and define
$$
v(x):=L^{-\frac{1}{p-1}}\,u\left(\frac{x}{2}+z_0\right),\qquad x\in \mathbb{R}^N.
$$
We observe that $v\in W^{s,p}_{\rm loc}(B_4)\cap L^{p-1}_{s\,p}(\mathbb{R}^N)$ and it is
a local weak solution in $B_4$ of 
\[
(-\Delta_p)^s v=\frac{2^{-s\,p}}{L}\,f\left(\frac{x}{2}+z_0\right)=:\widetilde f,
\]
with
\[
\left\|\widetilde f\right\|_{L^{q}(B_1)}=\frac{2^{N/q-s\,p}}{L}\,\|f\|_{L^{q}(B_{1})}\le \frac{2^{N/q-s\,p}}{L}\,\eta<\eta.
\]
By construction, we also have
\[
\|v\|_{L^\infty(B_1)}\leq 1,
\] 
and since $B_{1/2}(z_0)\subset B_1$
\[
\begin{split}
\int_{\mathbb{R}^N\setminus B_1}\frac{|v(x)|^{p-1}}{|x|^{N+s\,p}}\, dx&=\frac{2^{-s\,p}}{L}\,\int_{\mathbb{R}^N\setminus B_{1/2}(z_0)}\frac{|u(y)|^{p-1}}{|y-z_0|^{N+s\,p}}\,dy\\
&\le \frac{1}{L}\,\left(\frac{1}{2}\right)^{s\,p}\,\left(\frac{1}{1-|z_0|}\right)^{N+s\,p}\,\int_{\mathbb{R}^N\setminus B_1}\frac{|u(y)|^{p-1}}{|y|^{N+s\,p}}\,dy+\frac{2^{N}}{L}\,\|u\|^{p-1}_{L^{p-1}(B_1)}\\
&\leq \frac{2^{N}}{L}\,\int_{\mathbb{R}^N\setminus B_1}\frac{|u(y)|^{p-1}}{|y|^{N+s\,p}}dy+\frac{2^N\,|B_1|}{L}\,\|u\|^{p-1}_{L^\infty(B_1)}\leq 1,
\end{split}
\]
thanks to the definition of $L$ and assumptions \eqref{startup}.
Here we have used Lemma \ref{lm:tails} with the balls $B_{1/2}(z_0)\subset B_1$. We may therefore apply {\bf Step 1} to $v$ and obtain
$$
\sup_{x\in B_r}|v(x)-v(0)|\leq C\,r^{\Theta-\varepsilon},\quad 0<r<\frac{1}{2}.
$$
In terms of $u$ this is the same as
\begin{equation}
\label{supest}
\sup_{x\in B_r(z_0)}|u(x)-u(z_0)|\leq C\,L^\frac{1}{p-1}\,r^{\Theta-\varepsilon},\qquad 0<r<\frac{1}{4}.
\end{equation}
We note that this holds for any $z_0\in B_{1/2}$. Now take any pair $x,y\in B_{1/8}$ and set $|x-y|= r$. We observe that $r<1/4$ and we set $z=(x+y)/2$. Then we apply \eqref{supest} with $z_0=z$ and obtain
\[
\begin{split}
|u(x)-u(y)|\leq |u(x)-u(z)|+|u(y)-u(z)|&\leq 2\sup_{w\in B_r(z)}|u(w)-u(z)|\\
&\leq 2\,C\,L^\frac{1}{p-1}\,r^{\Theta-\varepsilon}=2\,C\,L^\frac{1}{p-1}\,|x-y|^{\Theta-\varepsilon},
\end{split}
\]
which is the desired result.
\end{proof}
\begin{comment}
\begin{coro}\label{homocor} Assume the hypotheses of Proposition \ref{prop:caffsilv}. Assume in addition that 
$$
2^{N}\,|B_1|\,\|u\|_{L^\infty(B_1)}\leq \frac12\qquad \mbox{ and }\qquad 2^{N}\,\int_{\mathbb{R}^N\setminus B_1}\frac{|u|^{p-1}}{|x|^{N+s\,p}}\, dx\leq \frac12. 
$$
Then $u\in C^{\alpha-\varepsilon}(\overline{B_{1/2}})$ and in particular
$$
\sup_{x,y\in B_\frac12}\frac{|u(x)-u(y)|}{|x-y|^{\alpha-\varepsilon}}\leq C.
$$
\end{coro}
\begin{proof} 
\end{proof}
\end{comment}
We can finally prove Theorem \ref{mainthm}. 
\begin{comment}

\begin{teo}
Let $p>2$, $0<s<1$ and $\alpha=\min((sp-N/q)/(p-1),1)$.  Let $u\in W^{s,p}_{\rm loc}(\Omega)\cap L^{p-1}_{s\,p}(\mathbb{R}^N)$ be a local weak solution of 
$$
(-\Delta_p)^s u = f \qquad \mbox{ in }\Omega,
$$
where $f\in L_\text{loc}^{q}(\Omega)$. Then $u\in C^\delta_{\rm loc}(\Omega)$ for every $0<\delta<\alpha$. 
\par
More precisely, for every $0<\delta<\alpha$ and every ball $B_{4R}(x_0)\Subset\Omega$, there exists a constant $C=C(N,s,p,\delta)>0$ such that 
$$
[u]_{C^{\delta}(B_{R/8}(x_0))}\leq \frac{C}{R^{\delta}}\,\left(\|u\|_{L^\infty(B_{R}(x_0))}+\mathrm{Tail}_{p-1,s\,p}(u;x_0,R)+R^{s}\,\|f\|^\frac{1}{p-1}_{L^{N/s}(B_{R})}\right).
$$
\end{teo}
\end{comment}
\begin{proof}[~Proof of Theorem~\ref{mainthm}] We may assume $x_0=0$ without loss of generality. We modify $u$ so that it fits into the setting of Proposition \ref{prop:caffsilv}. We choose $0<\delta<\Theta$, take $\eta$ as in Proposition \ref{prop:caffsilv} with the choice $\varepsilon=\Theta-\delta$ and set
\[
\mathcal{A}_R=\|u\|_{L^\infty(B_{R})}+\left(R^{s\,p}\,\int_{\mathbb{R}^N\setminus B_{R}}\frac{|u(y)|^{p-1}}{|y|^{N+s\,p}}\,  dy\right)^\frac{1}{p-1}+\left(\frac{R^{s\,p-N/q}\|f\|_{L^{q}(B_{R})}}{\eta}\right)^\frac{1}{p-1}.
\] 
As in the proof of Theorem~\ref{teo:1}, we observe that it is sufficient to prove that the rescaled function
\[
u_R(x):=\frac{1}{\mathcal{A}_R}\,u(R\,x),\qquad \mbox{ for }x\in B_4,
\]
satifies the estimate
\[
[u_R]_{C^{\delta}(B_{1/8})}\leq C.
\]
It is easily seen that the choice of $\mathcal{A}_R$ implies
$$
\|u_R\|_{L^\infty(B_1)}\leq 1,\qquad \int_{\mathbb{R}^N\setminus B_1}\frac{|u_R|^{p-1}}{|x|^{N+s\,p}}\, dx\leq 1. 
$$
In addition, $u_R$ is a local weak solution of
$$
(-\Delta_p)^s u_R\, (x) = \frac{R^{s\,p}}{\mathcal{A}_R^{p-1}}\,f(R\,x):= f_R(x),\qquad x\in B_4,
$$
with $\|f_R\|_{L^{q}(B_{1})}\le \eta$.
We may apply Proposition \ref{prop:caffsilv} with $\varepsilon=\Theta-\delta$ to $u_R$ and obtain
\[
[u_R]_{C^{\delta}(B_{1/8})}\leq C.
\]
By scaling back, this concludes the proof.
\end{proof}

\appendix
\section{Pointwise inequalities}\label{sec:app}

In this section, we list the pointwise inequalities used throughout the whole paper.

\begin{lm}
Let $p\ge 2$ and $q>1$. Then for every $A,B\in\mathbb{R}$ we have
\begin{equation}
\label{monotone}
|A-B|^{q-2}\,\Big(J_p(A)-J_p(B)\Big)\,(A-B)\ge (p-1)\,\left(\frac{q}{p-2+q}\right)^q\, \left||A|^\frac{p-2}{q}\, A-|B|^\frac{p-2}{q}\, B\right|^q.
\end{equation}
\end{lm}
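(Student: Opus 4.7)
The plan is to prove the inequality by comparing the two sides after representing the differences $J_p(A)-J_p(B)$ and $|A|^{(p-2)/q}A - |B|^{(p-2)/q}B$ as integrals along the segment joining $B$ to $A$, and then to close the estimate by a single application of Jensen's inequality.

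First I would dispatch the trivial case $A=B$, and assume $A\neq B$. A direct computation gives that the functions $t\mapsto J_p(t)$ and $g(t):=|t|^{(p-2)/q}t$ have derivatives
\[
J_p'(t)=(p-1)\,|t|^{p-2},\qquad g'(t)=\frac{p-2+q}{q}\,|t|^{\frac{p-2}{q}}.
\]
Since $p\ge 2$ both derivatives are non-negative and locally integrable, so the fundamental theorem of calculus yields, after the change of variable $s=B+t(A-B)$,
\[
J_p(A)-J_p(B)=(p-1)\,(A-B)\int_0^1 \left|B+t(A-B)\right|^{p-2}\,dt,
\]
and
\[
g(A)-g(B)=\frac{p-2+q}{q}\,(A-B)\int_0^1 \left|B+t(A-B)\right|^{\frac{p-2}{q}}\,dt.
\]
Using the first identity, the left-hand side of the target inequality rewrites as
\[
|A-B|^{q-2}\,\big(J_p(A)-J_p(B)\big)\,(A-B)=(p-1)\,|A-B|^{q}\int_0^1\left|B+t(A-B)\right|^{p-2}\,dt,
\]
while by the second identity the right-hand side becomes
\[
(p-1)\left(\frac{q}{p-2+q}\right)^{q}\left|g(A)-g(B)\right|^{q}=(p-1)\,|A-B|^{q}\left(\int_0^1\left|B+t(A-B)\right|^{\frac{p-2}{q}}\,dt\right)^{q}.
\]
Thus the problem reduces to showing the scalar inequality
\[
\int_0^1 \left|B+t(A-B)\right|^{p-2}\,dt\ge \left(\int_0^1 \left|B+t(A-B)\right|^{\frac{p-2}{q}}\,dt\right)^{q},
\]
which is precisely Jensen's inequality applied to the convex function $x\mapsto x^{q}$ (valid because $q\ge 1$), with $f(t)=|B+t(A-B)|^{(p-2)/q}\ge 0$. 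Multiplying by $(p-1)\,|A-B|^{q}$ gives the claimed bound.

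No step is expected to present a serious obstacle; the main conceptual content lies in recognizing that the constant $(q/(p-2+q))^{q}$ in the statement is exactly the factor needed to match the derivative of $g$ so that the integrand comparison reduces to the scalar Jensen inequality. The only mild care is to keep track of the identity $|A-B|^{q-2}(A-B)^{2}=|A-B|^{q}$ (with the convention that the left-hand side equals zero at $A=B$ when $q>1$), and to notice that when $p=2$ both sides reduce to $|A-B|^{q}$, giving equality and confirming sharpness of the constant.
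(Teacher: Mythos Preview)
Your proof is correct and follows essentially the same approach as the paper: both write $J_p(A)-J_p(B)$ and $|A|^{(p-2)/q}A-|B|^{(p-2)/q}B$ as integrals of their derivatives along the segment from $B$ to $A$ and then apply Jensen's inequality for the convex function $x\mapsto x^{q}$. The only cosmetic difference is that the paper integrates over $[B,A]$ directly (assuming $A>B$), whereas you reparametrize over $[0,1]$; the normalizing factor $(A-B)^{q-1}$ in the paper's version corresponds exactly to your $|A-B|^{q}$ after absorbing the Jacobian.
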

\begin{proof}
Since $J_p(A)-J_p(B)$ and $A-B$ share the same sign, we can assume without loss of generality that $A\ge B$.
If $A=B$ there is nothing to prove.
Let us assume that $A>B$, then we have
\[
\begin{split}
(A-B)^{q-2}\,\Big(J_p(A)-J_p(B)\Big)\,(A-B)&=(p-1)\,\left(\int_B^A |t|^{p-2}\,dt\right)\,(A-B)^{q-1}\\
&=(p-1)\, \left(\int_B^A |t|^{\frac{p-2}{q}\,q}\,dt\right)\,(A-B)^{q-1}\\
&\ge (p-1)\, \left(\int_B^A |t|^\frac{p-2}{q}\,dt\right)^q\\
&=(p-1)\,\left(\frac{q}{p-2+q}\right)^q\,\left||A|^\frac{p-2}{q}\, A-|B|^\frac{p-2}{q}\, B\right|^q, 
\end{split}
\]
which concludes the proof.
\end{proof}
\begin{lm}
Let $p\ge 2$. Then for every $A,B\in\mathbb{R}$ we have
\begin{equation}
\label{lipschitz}
\Big|J_p(A)-J_p(B)\Big|\le \frac{2\,(p-1)}{p}\, \left(|A|^\frac{p-2}{2}+|B|^\frac{p-2}{2}\right)\,\left||A|^\frac{p-2}{2}\,A-|B|^\frac{p-2}{2}\,B\right|.
\end{equation}
\end{lm}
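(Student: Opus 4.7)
The strategy mirrors the proof of \eqref{monotone} just given above: use the fundamental theorem of calculus to rewrite $J_p(A)-J_p(B)$ as a one-dimensional integral, factor the integrand in a way that brings out the quantity $|t|^{(p-2)/2}$, and then recognize the resulting integral as a telescoping of the antiderivative $t\mapsto |t|^{(p-2)/2}\,t$.

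Without loss of generality we may assume $A>B$ (the cases $A=B$ and $A<B$ are trivial/follow by swapping). Since $J_p'(t)=(p-1)\,|t|^{p-2}$, the fundamental theorem of calculus gives
\[
J_p(A)-J_p(B)=(p-1)\int_B^A |t|^{p-2}\,dt\ \ge\ 0.
\]
The key observation is that the function $\phi(t):=|t|^{(p-2)/2}\,t$ satisfies $\phi'(t)=\tfrac{p}{2}\,|t|^{(p-2)/2}$, so that
\[
\int_B^A |t|^{(p-2)/2}\,dt=\frac{2}{p}\,\big(\phi(A)-\phi(B)\big)=\frac{2}{p}\,\Big||A|^{(p-2)/2}\,A-|B|^{(p-2)/2}\,B\Big|,
\]
where we used $A>B$ and the monotonicity of $\phi$ to identify the absolute value.

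It remains to control the pointwise supremum of the ``extra'' factor $|t|^{(p-2)/2}$ over the interval $[B,A]$. Because $t\mapsto |t|^{(p-2)/2}$ (with $p\ge 2$) is decreasing on $(-\infty,0]$ and increasing on $[0,+\infty)$, its maximum on $[B,A]$ is attained at one of the endpoints, whence
\[
|t|^{(p-2)/2}\le \max\{|A|^{(p-2)/2},\,|B|^{(p-2)/2}\}\le |A|^{(p-2)/2}+|B|^{(p-2)/2},\qquad t\in[B,A].
\]
Inserting this pointwise bound into the factorization $|t|^{p-2}=|t|^{(p-2)/2}\cdot |t|^{(p-2)/2}$ and combining with the formula for $\int_B^A |t|^{(p-2)/2}\,dt$ yields
\[
\int_B^A |t|^{p-2}\,dt\le \frac{2}{p}\,\big(|A|^{(p-2)/2}+|B|^{(p-2)/2}\big)\,\Big||A|^{(p-2)/2}\,A-|B|^{(p-2)/2}\,B\Big|,
\]
and multiplication by $p-1$ gives exactly \eqref{lipschitz}. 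There is no real obstacle here: the only thing to watch is the sign analysis when $B<0<A$, which is handled automatically because $\phi$ is continuous and monotone on all of $\mathbb{R}$ and because $|t|^{(p-2)/2}$ is controlled by its endpoint values even when the interval straddles the origin.
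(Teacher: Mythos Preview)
Your proof is correct. The paper takes a slightly different route: it observes that $J_p(A)=J_\gamma\!\big(|A|^{\frac{p-2}{2}}A\big)$ with $\gamma=1+\tfrac{2(p-1)}{p}$, applies the mean value theorem to $J_\gamma$, and then bounds $J_\gamma'(\xi)=\tfrac{2(p-1)}{p}|\xi|^{\frac{p-2}{p}}$ at the intermediate point by its value at the endpoints (which is $\tfrac{2(p-1)}{p}|A|^{\frac{p-2}{2}}$ or $\tfrac{2(p-1)}{p}|B|^{\frac{p-2}{2}}$). Your argument avoids the composition trick entirely: you stay with the fundamental theorem of calculus for $J_p$ itself, split $|t|^{p-2}=|t|^{\frac{p-2}{2}}\cdot|t|^{\frac{p-2}{2}}$, bound one factor pointwise by its endpoint values, and recognize the remaining integral as $\tfrac{2}{p}\big(\phi(A)-\phi(B)\big)$ via the explicit antiderivative $\phi(t)=|t|^{\frac{p-2}{2}}t$. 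Both proofs ultimately rely on the monotonicity of $\phi$ and on the fact that $t\mapsto|t|^{\frac{p-2}{2}}$ attains its maximum on $[B,A]$ at an endpoint; your version is arguably more direct since it does not require spotting the identity $J_p=J_\gamma\circ\phi$.
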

\begin{proof}
For $A=B$ there is nothing to prove. Let us consider the case $A\not=B$. Without loss of generality, we can suppose that $A>B$. 
We first observe that if we set 
\[
\gamma=1+\frac{2\,(p-1)}{p},
\]
we can rewrite
\[
J_p(A)=J_\gamma\left(|A|^\frac{p-2}{2}\,A\right)\qquad \mbox{ and }\qquad J_p(B)=J_\gamma\left(|B|^\frac{p-2}{2}\,B\right).
\]
By basic calculus we thus have
\[
\begin{split}
J_p(A)-J_p(B)&=J_\gamma\left(|A|^\frac{p-2}{2}\,A\right)-J_\gamma\left(|B|^\frac{p-2}{2}\,B\right)\\
&=J'_\gamma(\xi)\,\left(|A|^\frac{p-2}{2}\,A-|B|^\frac{p-2}{2}\,B\right)\\
&\le \max\left\{J'_\gamma\left(|A|^\frac{p-2}{2}\,A\right),\, J'_\gamma\left(|B|^\frac{p-2}{2}\,B\right)\right\}\,\Big(|A|^\frac{p-2}{2}\,A-|B|^\frac{p-2}{2}\,B\Big).\\
\end{split}
\]
This concludes the proof.
\end{proof}
\begin{lm}
Let $\gamma\ge 1$. Then for every $A,B\in\mathbb{R}$ we have
\begin{equation}
\label{holder}
\left||A|^{\gamma-1}\,A-|B|^{\gamma-1}\,B\right|\ge \frac{1}{C}\, |A-B|^\gamma,
\end{equation}
for some constant $C=C(\gamma)>0$.
\end{lm}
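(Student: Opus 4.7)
The plan is to prove this elementary pointwise inequality by a direct case analysis on the signs of $A$ and $B$. First, by the odd symmetry $(A,B) \mapsto (-A,-B)$, which leaves both sides invariant, I may assume $B \ge 0$; and by relabeling, that $|A|^{\gamma-1}A \ge |B|^{\gamma-1}B$, so the outer absolute value on the left can be removed. The case $\gamma = 1$ is trivial with $C = 1$, so I will focus on $\gamma > 1$.

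\textbf{Case 1: same-sign case, $A \ge B \ge 0$.} Here the inequality reduces to $A^\gamma - B^\gamma \ge \frac{1}{C}(A-B)^\gamma$. If $A = 0$ it is trivial; otherwise, after dividing by $A^\gamma$ and setting $t = B/A \in [0,1]$, this is the one-variable inequality
\[
g(t) := 1 - t^\gamma - (1-t)^\gamma \ge 0, \qquad t \in [0,1].
\]
I plan to verify this by observing $g(0) = g(1) = 0$, and that $g'(t) = -\gamma t^{\gamma-1} + \gamma(1-t)^{\gamma-1}$ vanishes only at $t = 1/2$ on $(0,1)$, where $g(1/2) = 1 - 2^{1-\gamma} \ge 0$ for $\gamma \ge 1$. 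Hence $g \ge 0$ on $[0,1]$, and one may take $C = 1$ in this case.

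\textbf{Case 2: opposite-sign case, $A < 0 < B$.} Here $|A|^{\gamma-1}A - |B|^{\gamma-1}B = -|A|^\gamma - B^\gamma$, so the left-hand side equals $|A|^\gamma + B^\gamma$, while $|A-B| = |A| + B$. The inequality becomes
\[
|A|^\gamma + B^\gamma \ge \frac{1}{C}\bigl(|A| + B\bigr)^\gamma,
\]
which is the standard convexity inequality $(x+y)^\gamma \le 2^{\gamma-1}(x^\gamma + y^\gamma)$ for $x,y \ge 0$ and $\gamma \ge 1$ (by Jensen's inequality applied to $t \mapsto t^\gamma$). This gives $C = 2^{\gamma-1}$.

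Combining both cases, the choice $C = 2^{\gamma-1}$ works uniformly. There is no real obstacle here; the entire argument is a convexity computation, with the only subtle point being the verification that $1 - t^\gamma \ge (1-t)^\gamma$ on $[0,1]$, which crucially uses $\gamma \ge 1$ to get the inequality in the correct direction.
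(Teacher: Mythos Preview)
Your proof is correct, though the case labeling is slightly garbled: after first assuming $B \ge 0$ and then relabeling to ensure $A \ge B$, the opposite-sign case that can actually arise is $A > 0 > B$, not $A < 0 < B$ as you wrote. Since the computation in Case 2 is symmetric in the two possibilities, this is harmless; but it does mean your remark that ``the outer absolute value on the left can be removed'' is not quite consistent with how you then handle Case 2.

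The paper's proof takes a different, shorter route. Rather than splitting into cases, it observes that the inverse map $F(t) = |t|^{(1-\gamma)/\gamma}\,t$ is globally $(1/\gamma)$-H\"older continuous on $\mathbb{R}$, i.e., $|F(t)-F(s)| \le C\,|t-s|^{1/\gamma}$. Applying this with $t = |A|^{\gamma-1}A$ and $s = |B|^{\gamma-1}B$ (so that $F(t)=A$, $F(s)=B$) gives $|A-B| \le C\,\big||A|^{\gamma-1}A - |B|^{\gamma-1}B\big|^{1/\gamma}$, and raising to the power $\gamma$ yields the lemma. Your argument is more self-contained, since it does not invoke the H\"older continuity of $F$ as a black box, and it produces the explicit (and sharp) constant $C = 2^{\gamma-1}$. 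The paper's argument is essentially one line once the H\"older property of $F$ is granted, but establishing that property requires the same convexity considerations you carried out directly.
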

\begin{proof}
When $\gamma=1$, there is nothing to prove.
We take $\gamma>1$ and observe that if $A=0$ or $B=0$, the result trivially holds. Thus let us suppose that $A\,B\not=0$ and observe that the function $F:\mathbb{R}\to\mathbb{R}$ defined by
\[
F(t)=|t|^\frac{1-\gamma}{\gamma}\,t,
\]
is $(1/\gamma)-$H\"older continuous. More precisely, we have
\[
|F(t)-F(s)|\le C\,|t-s|^\frac{1}{\gamma},\qquad t,s\in\mathbb{R}.
\]
By applying the previous with
\[
t=|A|^{\gamma-1}\, A\qquad \mbox{ and }\qquad s=|B|^{\gamma-1}\,B,
\]
we get the conclusion.
\end{proof}
\begin{oss}\label{usual}
By combining \eqref{monotone} with $q=2$ and \eqref{holder} with $\gamma=p/2$, we also get
\begin{equation}
\label{xxx}
\Big(J_p(A)-J_p(B)\Big)\,(A-B)\ge \frac{1}{C}\, |A-B|^p.
\end{equation}
\end{oss}
\begin{lm}
Let $p\ge 2$, $\gamma\ge 1$ and $a,b,c,d\in\mathbb{R}$. Then we have
\begin{equation}
\label{erik}
\begin{split}
\Big(J_p(a-c)-J_p(b-d)\Big)&\Big(J_{\gamma+1}(a-b)-J_{\gamma+1}(c-d)\Big)\\
&\ge \frac{1}{C}\, \Big||a-b|^\frac{\gamma-1}{p}\,(a-b)-|c-d|^\frac{\gamma-1}{p}\,(c-d)\Big|^p,
\end{split}
\end{equation}
for some $C=C(p,\gamma)>0$.
\end{lm}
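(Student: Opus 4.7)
To establish \eqref{erik}, the key observation is the algebraic identity
\[
(a-c) - (b-d) = (a-b) - (c-d).
\]
Setting $Z := a-c$, $W := b-d$, $X := a-b$, $Y := c-d$, this reads $Z - W = X - Y$. The inequality is invariant under the swap $(a,b,c,d) \mapsto (b,a,d,c)$: both factors on the left flip sign (so their product is preserved), while the right-hand side is preserved by the outer absolute value. Hence one may assume $X \geq Y$, which by the identity forces $Z \geq W$; by the monotonicity of $J_p$ and $J_{\gamma+1}$, both factors on the left-hand side of \eqref{erik} are then nonnegative. If $X = Y$ both sides vanish, so from now on $X > Y$ and $Z > W$.

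The plan is to bound each of the two factors on the left of \eqref{erik} from below by a power of $X-Y = Z-W$, and then multiply. The factor involving $J_{\gamma+1}$ is disposed of by a direct application of \eqref{monotone} with $A \leftarrow X$, $B \leftarrow Y$, and with the roles ``$p$'' $\leftarrow \gamma+1$ (admissible since $\gamma+1 \geq 2$) and ``$q$'' $\leftarrow p$ (admissible since $p > 1$): this yields
\[
(X-Y)^{p-1}\bigl(J_{\gamma+1}(X) - J_{\gamma+1}(Y)\bigr) \geq c_{\gamma,p}\,\Bigl| |X|^{\tfrac{\gamma-1}{p}}X - |Y|^{\tfrac{\gamma-1}{p}}Y\Bigr|^{p},
\]
for the explicit constant $c_{\gamma,p} = \gamma\bigl(p/(\gamma+p-1)\bigr)^{p}$.

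The main remaining step is a scalar lower bound for the $J_p$ factor, namely
\[
J_p(Z) - J_p(W) \geq 2^{2-p}\,(Z-W)^{p-1} \qquad \text{for every } Z \geq W,
\]
which is where the structure of the nonlinearity $J_p$ really enters. This reduces, by setting $s := Z - W$ and $g(t) := J_p(t+s) - J_p(t)$, to minimizing the one-variable function $g$ over $t \in \mathbb{R}$; the case $p=2$ is trivial, while for $p > 2$ a routine differentiation shows that the global minimum is attained at the symmetric point $t = -s/2$, giving $g(-s/2) = 2(s/2)^{p-1} = 2^{2-p}s^{p-1}$. Multiplying this scalar bound with the previous display and using $Z-W = X-Y$ then produces \eqref{erik} with $C = 2^{p-2}(\gamma+p-1)^{p}/(\gamma\, p^{p})$. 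The main (and really only) obstacle is the scalar monotonicity estimate for $J_p$; all other steps are either the algebraic identity or a direct invocation of \eqref{monotone}.
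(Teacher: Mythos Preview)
Your proof is correct and follows essentially the same line as the paper's: reduce to $X>Y$ (hence $Z>W$), bound the $J_{\gamma+1}$ factor via \eqref{monotone} with exponents $(\gamma+1,p)$, bound the $J_p$ factor below by $(Z-W)^{p-1}$, and multiply. The only difference is in this last step: the paper obtains $J_p(Z)-J_p(W)\ge C^{-1}(Z-W)^{p-1}$ by invoking the already proved inequality \eqref{holder} (applied with exponent $p-1$), whereas you give a self-contained one-variable minimization yielding the sharp constant $2^{2-p}$. Your route has the minor advantage of producing an explicit $C$; otherwise the arguments coincide.
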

\begin{proof}
Of course, we can assume that $a-c>b-d$ without loss of generality.
We first observe that we can write
\begin{equation}
\label{erik1}
\begin{split}
\Big(J_p(a-c)-J_p(b-d)\Big)&\Big(J_{\gamma+1}(a-b)-J_{\gamma+1}(c-d)\Big)\\
&=\Big(J_p(a-c)-J_p(b-d)\Big)\\
&\times\frac{\Big(J_{\gamma+1}(a-b)-J_{\gamma+1}(c-d)\Big)}{\Big((a-b)-(c-d)\Big)^{p-1}}\,\Big((a-b)-(c-d)\Big)^{p-1}.
\end{split}
\end{equation}
As for the first term on the right-hand side, by \eqref{holder} for $\gamma=p-1$ we have
\[
\Big(J_p(a-c)-J_p(b-d)\Big)\ge \frac{1}{C}\, \Big|a-c-b+d\Big|^{p-1},
\]
while by using \eqref{monotone} with $p=\gamma+1$ and $q=p$, we have 
\[
\begin{split}
\frac{\Big(J_{\gamma+1}(a-b)-J_{\gamma+1}(c-d)\Big)}{\Big((a-b)-(c-d)\Big)^{p-1}}&\Big((a-b)-(c-d)\Big)^{p-1}\\
&\ge \gamma\,\left(\frac{p}{\gamma-1+p}\right)^2\,\left||a-b|^\frac{\gamma-1}{p}\,(a-b)-|c-d|^\frac{\gamma-1}{p}\,(c-d)\right|^p\\
&\times \Big|a-b-c+d\Big|^{1-p}.
\end{split}
\]
By using these estimates in \eqref{erik1}, we finally get \eqref{erik}.
\end{proof}
\begin{lm}
Let $p\ge 2$, $\gamma\ge 1$ and $a,b,c,d\in\mathbb{R}$. Then we have
\begin{equation}
\label{erik2}
\begin{split}
\Big(J_p(a-b)-J_p(c-d)\Big)&\,\Big(J_{\gamma+1}(a-c)-J_{\gamma+1}(b-d)\Big)\\
&\ge \frac{2\,(p-1)}{p^2}\, \left||a-b|^\frac{p-2}{2}\,(a-b)-|c-d|^\frac{p-2}{2}\,(c-d)\right|^2\\
&\times\left(|a-c|^{\gamma-1}+|b-d|^{\gamma-1}\right).
\end{split}
\end{equation}
\end{lm}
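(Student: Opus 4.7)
The plan is to exploit the algebraic identity $(a-b)-(c-d)=(a-c)-(b-d)$ together with the integral representation of the differences $J_p(A)-J_p(B)$ and $J_{\gamma+1}(X)-J_{\gamma+1}(Y)$. Write $A=a-b$, $B=c-d$, $X=a-c$, $Y=b-d$, so that the inequality becomes
\[
\bigl(J_p(A)-J_p(B)\bigr)\bigl(J_{\gamma+1}(X)-J_{\gamma+1}(Y)\bigr)\ge \frac{2(p-1)}{p^2}\Bigl||A|^{\frac{p-2}{2}}A-|B|^{\frac{p-2}{2}}B\Bigr|^2\bigl(|X|^{\gamma-1}+|Y|^{\gamma-1}\bigr),
\]
and, crucially, $A-B=X-Y$.

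First, using the fundamental theorem of calculus applied to $J_p'(s)=(p-1)|s|^{p-2}$ and $J_{\gamma+1}'(s)=\gamma|s|^{\gamma-1}$, I would write
\[
J_p(A)-J_p(B)=(p-1)(A-B)\int_0^1 |B+t(A-B)|^{p-2}\,dt,\qquad J_{\gamma+1}(X)-J_{\gamma+1}(Y)=\gamma(X-Y)\int_0^1 |Y+t(X-Y)|^{\gamma-1}\,dt.
\]
Multiplying these identities and substituting $(A-B)(X-Y)=(A-B)^2$ gives an exact formula for the left-hand side.

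Next, to handle the factor involving $|A|^{(p-2)/2}A-|B|^{(p-2)/2}B$, I would use the analogous formula
\[
|A|^{\frac{p-2}{2}}A-|B|^{\frac{p-2}{2}}B=\frac{p}{2}(A-B)\int_0^1 |B+t(A-B)|^{\frac{p-2}{2}}\,dt,
\]
together with the Cauchy--Schwarz inequality $\bigl(\int_0^1 |B+t(A-B)|^{(p-2)/2}\,dt\bigr)^2\le \int_0^1 |B+t(A-B)|^{p-2}\,dt$, to deduce
\[
(A-B)^2\int_0^1 |B+t(A-B)|^{p-2}\,dt\ge \frac{4}{p^2}\Bigl||A|^{\frac{p-2}{2}}A-|B|^{\frac{p-2}{2}}B\Bigr|^2.
\]
This step is routine and accounts for the factor $4(p-1)/p^2$ on the right-hand side.

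The only genuine obstacle is the scalar integral estimate
\[
\int_0^1 |Y+t(X-Y)|^{\gamma-1}\,dt\ \ge\ \frac{1}{2\gamma}\bigl(|X|^{\gamma-1}+|Y|^{\gamma-1}\bigr),\qquad \gamma\ge 1.
\]
I would prove this by a direct case analysis: when $X$ and $Y$ have the same sign one evaluates the integral explicitly as $(|X|^\gamma-|Y|^\gamma)/(\gamma(|X|-|Y|))$ (WLOG $|X|>|Y|$) and the claim reduces to $(|X|+|Y|)(|X|^{\gamma-1}-|Y|^{\gamma-1})\ge 0$; when $X$ and $Y$ have opposite signs, one splits the integral at the zero of $t\mapsto Y+t(X-Y)$, obtains $\int = (|X|^\gamma+|Y|^\gamma)/(\gamma(|X|+|Y|))$, and the claim reduces to $(|X|-|Y|)(|X|^{\gamma-1}-|Y|^{\gamma-1})\ge 0$. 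Both reductions hold since $\gamma\ge 1$. Putting all three steps together yields exactly the factor $\frac{2(p-1)}{p^2}$ and completes the proof.
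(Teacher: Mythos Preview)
Your proposal is correct and follows the same overall skeleton as the paper: both arguments set $A=a-b$, $B=c-d$, $X=a-c$, $Y=b-d$, exploit the key identity $A-B=X-Y$, and then bound the two factors $(J_p(A)-J_p(B))(A-B)$ and $(J_{\gamma+1}(X)-J_{\gamma+1}(Y))/(X-Y)$ separately before multiplying and cancelling the common factor $(A-B)^2$.

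The difference lies in how each sub-inequality is obtained. For the $J_p$ part, your Cauchy--Schwarz estimate on the integral representation is exactly the $q=2$ case of the paper's inequality \eqref{monotone}, which the paper proves by Jensen on the same integral --- so this step is essentially identical. For the $J_{\gamma+1}$ part, the paper instead invokes the algebraic identity
\[
\bigl(J_{\gamma+1}(y)-J_{\gamma+1}(x)\bigr)(y-x)=\tfrac{1}{2}\bigl(|y|^{\gamma-1}+|x|^{\gamma-1}\bigr)(y-x)^2+\tfrac{1}{2}\bigl(|y|^{\gamma-1}-|x|^{\gamma-1}\bigr)(y^2-x^2),
\]
and simply notes that the last term is nonnegative when $\gamma\ge 1$. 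Your route --- evaluating $\int_0^1|Y+t(X-Y)|^{\gamma-1}\,dt$ explicitly and splitting into the same-sign and opposite-sign cases --- proves the equivalent bound $\gamma\int_0^1|Y+t(X-Y)|^{\gamma-1}\,dt\ge\tfrac{1}{2}(|X|^{\gamma-1}+|Y|^{\gamma-1})$ by direct computation. This is slightly longer than the paper's one-line identity, but it is fully self-contained and gives the same constant.
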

\begin{proof} 
We first observe that if $a-b=c-d$, then there is nothing to prove. Thus we assume $a-b\not =c-d$.
By following \cite[Formula I, page 71]{Petersnotes}, for every $x,y\in\mathbb{R}$ we have
$$
\left(J_{\gamma+1}(y)-J_{\gamma+1}(x)\right)(y-x)=\frac12\left(|y|^{\gamma-1}+|x|^{\gamma-1}\right)(y-x)^2+\frac{|y|^{\gamma-1}-|x|^{\gamma-1}}{2}\,(y^2-x^2).
$$
By observing that the last term is nonnegative and choosing $y=a-c$ and $x=b-d$, we get
\[
\left(J_{\gamma+1}(a-c)-J_{\gamma+1}(b-d)\right)\,(a-c-(b-d))\ge \frac{1}{2}\,\left(|a-c|^{\gamma-1}+|b-d|^{\gamma-1}\right)\,(a-c-(b-d))^2.
\]
By using \eqref{monotone} with 
\[
A=a-b,\qquad B=c-d\qquad \mbox{ and }\qquad q=2,
\]
we have
\[
\left(J_p(a-b)-J_p(c-d)\right)\left(a-b-(c-d)\right)\geq (p-1)\,\frac{4}{p^2}\,\left||a-b|^\frac{p-2}{2}\,(a-b)-|c-d|^\frac{p-2}{2}\,(c-d)\right|^2.
\]
Multiplication of the two above inequalities yields
\begin{align*}
&\left(J_{\gamma+1}(a-c)-J_{\gamma+1}(b-d)\right)\left(J_p(a-b)-J_p(c-d)\right)\left(a-b-(c-d)\right)^2\\
&\geq (p-1)\,\frac{2}{p^2}\,\left(|a-c|^{\gamma-1}+|b-d|^{\gamma-1}\right)\,(a-c-(b-d))^2\Big||a-b|^\frac{p-2}{2}(a-b)-|c-d|^\frac{p-2}{2}(c-d)\Big|^2.
\end{align*}
By simplifying the factor $\left(a-b-(c-d)\right)^2$ in both sides, we get the desired inequality.
\end{proof}

\medskip

\end{document}